\documentclass[reqno, 12pt]{amsart}
\pdfoutput=1
\makeatletter
\let\origsection=\section \def\section{\@ifstar{\origsection*}{\mysection}} 
\def\mysection{\@startsection{section}{1}\z@{.7\linespacing\@plus\linespacing}{.5\linespacing}{\normalfont\scshape\centering\S}}
\makeatother        

\usepackage{amsmath,amssymb,amsthm}
\usepackage{mathrsfs}
\usepackage{mathabx}\changenotsign
\usepackage{dsfont}
 
\usepackage{xcolor}
\usepackage[backref]{hyperref}
\hypersetup{
    colorlinks,
    linkcolor={red!60!black},
    citecolor={green!60!black},
    urlcolor={blue!60!black}
}

\usepackage{graphicx}

\usepackage{verbatim}

\usepackage[open,openlevel=2,atend]{bookmark}

\usepackage[abbrev,msc-links,backrefs]{amsrefs} 
\usepackage{doi}

\renewcommand{\PrintDOI}[1]{\doi{#1}}

\usepackage[T1]{fontenc}
\usepackage{lmodern}
\usepackage[babel]{microtype}
\usepackage[english]{babel}

\linespread{1.3}
\usepackage{geometry}
\geometry{left=27.5mm,right=27.5mm, top=25mm, bottom=25mm}
\numberwithin{equation}{section}
\numberwithin{figure}{section}

\usepackage{enumitem}

\let\polishlcross=\l
\def\l{\ifmmode\ell\else\polishlcross\fi}

\def\paragraph#1{%
  \noindent\textbf{#1.}\enspace}

\let\setminus=\smallsetminus

\makeatletter
\def\moverlay{\mathpalette\mov@rlay}
\def\mov@rlay#1#2{\leavevmode\vtop{   \baselineskip\z@skip \lineskiplimit-\maxdimen
   \ialign{\hfil$\m@th#1##$\hfil\cr#2\crcr}}}
\newcommand{\charfusion}[3][\mathord]{
    #1{\ifx#1\mathop\vphantom{#2}\fi
        \mathpalette\mov@rlay{#2\cr#3}
      }
    \ifx#1\mathop\expandafter\displaylimits\fi}
\makeatother

\DeclareFontFamily{U}  {MnSymbolC}{}
\DeclareSymbolFont{MnSyC}         {U}  {MnSymbolC}{m}{n}
\DeclareFontShape{U}{MnSymbolC}{m}{n}{
    <-6>  MnSymbolC5
   <6-7>  MnSymbolC6
   <7-8>  MnSymbolC7
   <8-9>  MnSymbolC8
   <9-10> MnSymbolC9
  <10-12> MnSymbolC10
  <12->   MnSymbolC12}{}
\DeclareMathSymbol{\powerset}{\mathord}{MnSyC}{180}

\let\epsilon=\varepsilon

\let\rho=\varrho
\let\theta=\vartheta

\theoremstyle{plain}
\newtheorem{thm}{Theorem}[section]
\newtheorem{thm-intro}{Theorem}[]
\newtheorem{theorem}[thm]{Theorem}
\newtheorem{corollary}[thm]{Corollary}

\newtheorem{proposition}[thm]{Proposition}

\newtheorem{lemma}[thm]{Lemma}

\theoremstyle{definition}
\newtheorem{remark}[thm]{Remark}
\newtheorem{definition}[thm]{Definition}
\newtheorem{example}[thm]{Example}

\usepackage{accents}

\let\phi=\varphi






\def\lowfwd #1#2#3{{\mathop{\kern0pt #1}\limits^{\kern#2pt\raise.#3ex
\vbox to 0pt{\hbox{$\scriptscriptstyle\rightarrow$}\vss}}}}
\def\lowbkwd #1#2#3{{\mathop{\kern0pt #1}\limits^{\kern#2pt\raise.#3ex
\vbox to 0pt{\hbox{$\scriptscriptstyle\leftarrow$}\vss}}}}
\def\fwd #1#2{{\lowfwd{#1}{#2}{15}}}
\def\ve{\kern-1pt\lowfwd e{1.5}2\kern-1pt}
\def\ev{\kern-1pt\lowbkwd e{1.5}2\kern-1pt}
\def\vp{\lowfwd p{1.5}2}

\def\vr{\lowfwd r{1.5}2}
\def\rv{\lowbkwd r02}

\def\vrdash{{\mathop{\kern0pt r\lower.5pt\hbox{${}
     \scriptstyle'$}}\limits^{\kern0pt\raise.02ex
     \vbox to 0pt{\hbox{$\scriptscriptstyle\rightarrow$}\vss}}}}
\def\rvdash{{\mathop{\kern0pt r\lower.5pt\hbox{${}
     \scriptstyle'$}}\limits^{\kern0pt\raise.02ex
     \vbox to 0pt{\hbox{$\scriptscriptstyle\leftarrow$}\vss}}}}

\def\vs{\lowfwd s{1.5}1}
\def\sv{\lowbkwd s{1.5}1}

\def\vsidash{{\mathop{\kern0pt s_i\kern-3.5pt\lower.3pt\hbox{${}
     \scriptstyle'$}}\limits^{\kern0pt\raise.02ex
     \vbox to 0pt{\hbox{$\scriptscriptstyle\rightarrow$}\vss}}}}
\def\vS{{\hskip-1pt{\fwd S3}\hskip-1pt}} 
\def\vSr{{\vec S}_{\raise.1ex\vbox to 0pt{\vss\hbox{$\scriptstyle\ge\vr$}}}}
\def\vSdash{{\mathop{\kern0pt S\lower-1pt\hbox{${}
     \scriptstyle'$}}\limits^{\kern2pt\raise.1ex
     \vbox to 0pt{\hbox{$\scriptscriptstyle\rightarrow$}\vss}}}}
\def\vsdash{{\mathop{\kern0pt s\lower.5pt\hbox{${}
     \scriptstyle'$}}\limits^{\kern0pt\raise.02ex
     \vbox to 0pt{\hbox{$\scriptscriptstyle\rightarrow$}\vss}}}}
     
\def\svdash{{\mathop{\kern0pt s\lower.5pt\hbox{${}
     \scriptstyle'$}}\limits^{\kern0pt\raise.02ex
     \vbox to 0pt{\hbox{$\scriptscriptstyle\leftarrow$}\vss}}}}
     
\def\vtdash{{\mathop{\kern0pt t\lower0pt\hbox{${}
     \scriptstyle'$}}\limits^{\kern0pt\raise.1ex
     \vbox to 0pt{\hbox{$\scriptscriptstyle\rightarrow$}\vss}}}}
     
\def\tvdash{{\mathop{\kern0pt t\lower0pt\hbox{${}
     \scriptstyle'$}}\limits^{\kern0pt\raise.1ex
     \vbox to 0pt{\hbox{$\scriptscriptstyle\leftarrow$}\vss}}}}
     
\def\vddash{{\mathop{\kern0pt d\raise1pt\hbox{${}
     \scriptstyle'$}}\limits^{\kern0pt\raise.02ex
     \vbox to 0pt{\hbox{$\scriptscriptstyle\rightarrow$}\vss}}}}
     
\def\dvdash{{\mathop{\kern0pt d\raise1pt\hbox{${}
     \scriptstyle'$}}\limits^{\kern0pt\raise.02ex
     \vbox to 0pt{\hbox{$\scriptscriptstyle\leftarrow$}\vss}}}}
     
\def\vtstar{{\mathop{\kern0pt t\raise2.5pt\hbox{${}
     \scriptstyle*$}}\limits^{\kern0pt\raise.1ex
     \vbox to 0pt{\hbox{$\scriptscriptstyle\rightarrow$}\vss}}}}
     
\def\tvstar{{\mathop{\kern0pt t\raise2.5pt\hbox{${}
     \scriptstyle*$}}\limits^{\kern0pt\raise.1ex
     \vbox to 0pt{\hbox{$\scriptscriptstyle\leftarrow$}\vss}}}}

\def\vtstarD{{\mathop{\kern0pt t\kern.5pt\raise3pt\hbox{${}
     \scriptstyle*$}{\kern-5.5pt\lower3pt\hbox{$
     \scriptstyle D$}}}\limits^{\kern0pt\raise.1ex
     \vbox to 0pt{\hbox{$\scriptscriptstyle\rightarrow$}\vss}}}}

\def\tvstarD{{\mathop{\kern0pt t\kern.5pt\raise3pt\hbox{${}
     \scriptstyle*$}{\kern-5.5pt\lower3pt\hbox{$
     \scriptstyle D$}}}\limits^{\kern0pt\raise.1ex
     \vbox to 0pt{\hbox{$\scriptscriptstyle\leftarrow$}\vss}}}}

\def\vt{\lowfwd t{1.5}1}
\def\tv{\lowbkwd t{1.5}1}
\def\vT{{\fwd T1}}
\def\vO{\mathcal{O}}

\def\vOc{\overline{\mathcal{O}}}

\def\vt{\lowfwd t{1.5}1}
\def\tv{\lowbkwd t{1.5}1}

\def\vx{\lowfwd x{1.5}1}
\def\xv{\lowbkwd x{1.5}1}

\def\va{\lowfwd a{1.5}1}

\def\vb{\lowfwd b{1.5}1}
\def\bv{\lowbkwd b{1.5}1}
\def\vE{\lowfwd E{1.5}1}

\begin{document}

\title{Representations of infinite tree-sets}

\author{J. Pascal Gollin and Jakob Kneip}
\email{\tt pascal.gollin@uni-hamburg.de}
\email{\tt jakob.kneip@studium.uni-hamburg.de}
\address{Fachbereich Mathematik, Universit\"{a}t Hamburg, Bundesstra{\ss}e 55, 20146 Hamburg, Germany}

\begin{abstract} 
    Tree sets are abstract structures that can be used to model various tree-shaped objects in combinatorics. Finite tree sets can be represented by finite graph-theoretical trees.
    We extend this representation theory to infinite tree sets. 
    
    First we characterise those tree sets that can be represented by tree sets arising from infinite trees; these are precisely those tree sets without a chain of order type ${\omega+1}$. 
    Then we introduce and study a topological generalisation of infinite trees which can have limit edges, and show that every infinite tree set can be represented by the tree set admitted by a suitable such tree-like space.
\end{abstract}

\maketitle

\section{Introduction}
\label{sec:introduction}

Separations of graphs have been studied in the context of structural graph theory for a long time. 
For instance every edge of the decomposition tree of a tree-decomposition of a graph defines a separation in a natural way\footnote{As the sides of the separation, consider the union of the parts corresponding to the components of the tree after deleting the edge.}. 
The separations obtained in this way have an additional important property: they are nested\footnote{Two separations are nested if a side of the first separation is a subset of a side of the second separation, and the other side of the second separation is a subset of the other side of the first separation.} 
with each other. 
Looking at nested sets of separations of a graph has since been a useful way to study tree-decompositions, and especially in infinite graphs they offer an analogue when a tree-decomposition with a certain desired property may not exist (see \cite{RST-infinite-minors} for example).

While any tree-decomposition of a graph into small parts witnesses that the graph has low tree-width, there are various dense objects that force high tree-width in a graph. 
Among these are large cliques and clique minors, large grids and grid minors as well as high-order brambles. 
All these dense objects in a graph have the property that they orient its low-order separations by lying mostly on one side of any given low-order separation. 
For such a dense structure in a graph these orientations of separations are consistent with each other: 
no two of them `disagree' about where the dense object lies by pointing away from each other.

In~\cite{RSTangles} Robertson and Seymour proposed the notion of tangles, which are such families of consistently oriented separations up to a certain order. 
These tangles can be studied in their own right, instead of any dense objects that may induce them. 
By varying the strength of the consistency conditions one can model different kinds of dense objects, and the resulting consistent orientations give rise to different types of tangles.

To talk about these separations systems one does not even need an underlying graph structure or ground set: 
they can be formulated in a purely axiomatic way, see Diestel~\cite{ASS}. 
Such a separation system is simply a partially ordered set with an order-reversing involution.
The notions of consistency of separations that come from dense substructures in graphs can be translated into this setting as well. 
The tangles of graphs then become abstract tangles, and the tree-like structures become nested systems of separations, so-called \emph{tree sets} \cite{TreeSets}. 
This abstract framework turns out to be no less powerful, even for graphs alone, than ordinary graph separations. 
In~\cite{Duality} Diestel and Oum established an abstract duality theorem for separation systems which easily implies (see~\cite{DualityII}) all the classical duality results from graph- and matroid theory, such as the tree-width duality theorem by Seymour and Thomas~\cite{TreeWidth}. 
The unified duality theorem asserts that for any sensible notion of consistency a separation system contains either an abstract tangle or a tree set witnessing that no such tangle exists. 
Furthermore this abstract notion of separation systems can be applied in fields outside of graph theory, for instance in image analysis~\cite{MonaLisa}.

Tree sets are also interesting objects in their own right: 
they are flexible enough to model a whole range of other `tree-like' structures in discrete mathematics, 
such as ordinary graph trees, order trees and nested systems of bipartitions of sets \cite{TreeSets}. 

In fact, tree sets and graph-theoretic trees are related even more closely than that: 
for any tree $T$ 
the set $\vE$ of oriented edges of $T$ admits a natural partial order, 
which in fact turns $\vE$ into a tree set, the \emph{edge tree set of~$T$}. 
As was shown in~\cite{TreeSets}, these edge tree sets of graph-theoretical trees are rich enough to represent all finite tree sets: 
every finite tree set is isomorphic to the edge tree set of a suitable tree.

In this paper we extend the analysis of representations of tree sets to infinite tree sets. 
The definition of an edge tree set of a graph-theoretical tree straightforwardly extends to infinite trees.
From the structure of these it is clear that the edge tree set of a tree $T$ cannot contain a chain of order type~${\omega+1}$. 
We will show that this is the only obstruction for a tree set to being representable by the edge tree set of a (possibly infinite) tree:

\begin{thm-intro}
    \label{thm-intro-1}
    Every tree set without a chain of order type~${\omega+1}$ is isomorphic to the edge tree set of a suitable tree.
\end{thm-intro}

Secondly, we would like to represent infinite tree sets that do contain a chain of order type~${\omega+1}$ by edge tree sets of an adequate tree structure as well. 
To achieve this we turn to the notion of \emph{graph-like spaces} introduced by Thomassen and Vella~\cite{OriginalGLS} and further studied by Bowler, Carmesin and Christian~\cite{GLS}: 
these are topological spaces with a clearly defined structure of vertices and edges, which can be seen as a limit object of finite graphs. 
In particular, for a chain of any order type, there exists a graph-like space containing a `path' whose edges form a chain of that order type. 
Therefore the \emph{tree-like spaces}, those graph-like spaces which have a tree-like structure, overcome the obstacle of chains of order type ${\omega+1}$ which prevented the edge tree sets of infinite trees from representing all infinite tree sets: unlike graph-theoretic trees, tree-like spaces can have limit edges.
And indeed we will prove in this paper that the edge tree sets of tree-like spaces can be used to represent all tree sets.

\begin{thm-intro}
    \label{thm-intro-2}
    Every tree set is isomorphic to the edge tree set of a suitable tree-like space.
\end{thm-intro}

This paper is organised as follows. 
In Section~\ref{sec:separationsystems} we recall the basic definitions of abstract separation systems and tree sets and establish a couple of elementary lemmas we will use throughout the paper. 
Following that, in Section~\ref{sec:tame-tree-sets}, we formally define the edge tree set of a tree and prove Theorem~\ref{thm-intro-1}. 
In Section~\ref{sec:treelikespaces}, we introduce the concept of {\em tree-like spaces} which generalise infinite graph-theoretical trees. 
We define edge tree sets of tree-like spaces analogously to edge tree sets of graph-theoretical trees and then prove Theorem~\ref{thm-intro-2}.

\section{Separation systems}
\label{sec:separationsystems}

An abstract \emph{separation system} $\vS=(\vS,\le,{}^*)$ is a partially ordered set with an order-reversing involution ${}^*$. 
An element $\vs\in\vS$ is called an \emph{oriented separation}, 
and its \emph{inverse}~${(\vs)^*}$ is denoted as~$\sv$, and vice versa. 
The pair ${s=\{\vs,\sv\}}$ is an \emph{unoriented separation}\footnote{To improve readability `oriented' and `unoriented' will often be omitted if the type of separation follows from the context.}, 
with \emph{orientations} $\vs$ and $\sv$, and the set of all such pairs is denoted as~$S$. 
The assumption that~${}^*$ is order-reversing means that for all~${\vs, \vr \in \vS}$ we have 
${\vs \leq \vr}$ if and only if~${\sv \geq \rv}$.
If $S'$ is a set of unoriented separations, we write~$\vSdash$ for the set~${\bigcup S'}$ of all orientations of separations in~$S'$.

A separation~$\vs$ is \emph{small} and its inverse~$\sv$ \emph{co-small} if ${\vs\le\sv}$. 
If neither~$\vs$ nor~$\sv$ is small then~$s$ is \emph{regular}, and we call both~$\vs$ and~$\sv$ regular as well.

A separation~${\vs \in \vS}$ is \emph{trivial in}~$\vS$ and its inverse~$\sv$ is \emph{co-trivial in}~$\vS$ if there is some~${\vr \in \vS}$ with~${\vs \leq \vr,\rv}$ and~${s \neq r}$. 
In this case~$r$ is the \emph{witness} of the triviality of~$\vs$. 
If neither~$\vs$ nor~$\sv$ is trivial in~$\vS$ we call~$s$ \emph{nontrivial}. 
If~$\vs$ is a trivial separation with witness~$r$ then~$\vs$ is small as~${\vs \leq \vr \leq \sv}$. Conversely every separation that lies below a small separation is trivial: 
if~$\vs$ is small and~${r\ne s}$ has an orientation~${\vr \leq \vs}$, then~$\vr$ is trivial as ${\vr < \vs \leq \sv}$.

Two unoriented separations~$s$ and~$r$ are \emph{nested} if they have comparable orientations. 
Otherwise~$r$ and~$s$ \emph{cross}. 
A set~$S'$ of separations is nested if all of its elements are pairwise nested.

A \emph{tree set} is a nested separation system with no trivial elements. 
It is \emph{regular} if all of its elements are regular, i.e.\ if no~${\vs \in \tau}$ is small.

An \emph{orientation} of a set~$\vSdash$ or~$S'$ of separations is a set~${O \subseteq \vSdash}$ with~${|O \cap s| = 1}$ for every~${s \in S'}$. 
An orientation is \emph{consistent}~if ${\sv \leq \vr}$ implies ${r=s}$ for all~${\vr,\vs\in O}$. 
A \emph{partial orientation} of~$\vS$ is an orientation of a subset of~$\vS$. 
A partial orientation~$P$ \emph{extends} another partial orientation~$Q$ if~${Q \subseteq P}$.

For a tree set~$\tau$ an orientation~$O$ of~$\tau$ is \emph{splitting} if it is consistent and has the property that for every~${\vr\in O}$ there is some maximal element~$\vs$ of~$O$ with~${\vr \leq \vs}$. 

Consistent orientations of a tree set~$\tau$ can be thought of as the `vertices' of a tree set, an idea that we will make more precise in the next sections. 
In the context of infinite tree sets, the non-splitting orientations can be thought of as `limit vertices' or `ends' of the tree set.

A subset~${\sigma \subseteq \tau}$ is a \emph{star} if~${\vr \leq \sv}$ for all~${\vr,\vs\in\sigma}$ with~${\vr \neq \vs}$. 
For example, the set of maximal elements of a consistent orientation of a tree set is always a star:

\begin{lemma}
    \label{lem:cons_star}
    Let~$O$ be a consistent orientation of a tree set~$\tau$. 
    Then the set~$\sigma$ of the maximal elements of~$O$ is a star.
\end{lemma}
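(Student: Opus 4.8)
The plan is to verify directly that any two distinct maximal elements $\vr,\vs$ of $O$ satisfy $\vr\le\sv$. Since $O$ is an orientation, both orientations of the separations $r$ and $s$ lie in $\tau$, and $\tau$ is nested, so $r$ and $s$ have comparable orientations. The task is therefore to rule out the three comparabilities other than $\vr\le\sv$ and conclude that this one must hold.

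First I would dispose of the cases that contradict maximality. If $\vr\le\vs$, then since $r\ne s$ this strict inequality would put $\vr$ strictly below an element of $O$, contradicting that $\vr$ is a maximal element of $O$; similarly $\vs\le\vr$ is impossible. That leaves the possibility $\sv\le\vr$ (equivalently $\rv\le\vs$) as the only alternative to the desired $\vr\le\sv$. So the crux is to eliminate $\sv\le\vr$.

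The key step, and the one I expect to be the main (though still short) obstacle, is to see that $\sv\le\vr$ cannot happen. Here consistency of $O$ enters: if $\sv\le\vr$ with $\sv,\vr\in O$, then by definition of a consistent orientation we must have $r=s$, contradicting $\vr\ne\vs$. Hence $\sv\le\vr$ is ruled out. Combining this with the comparability granted by nestedness, the only surviving relation between orientations of $r$ and $s$ is $\vr\le\sv$, which is exactly the star condition. Since $\vr$ and $\vs$ were arbitrary distinct elements of $\sigma$, the set $\sigma$ is a star.

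One small point to be careful about in writing this up is that nestedness gives \emph{some} comparable pair of orientations, not necessarily the pair $\{\vr,\vs\}$ we started naming; but applying the involution freely, any comparability among $\{\vr,\rv\}$ and $\{\vs,\sv\}$ reduces to one of the four cases $\vr\le\vs$, $\vs\le\vr$, $\sv\le\vr$, $\vr\le\sv$ (using that ${}^*$ is order-reversing to translate, e.g., $\rv\le\sv$ into $\vs\le\vr$). All four are covered by the argument above, so no case is missed.
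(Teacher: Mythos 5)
Your proof is correct and follows essentially the same route as the paper's: use nestedness to get comparability, maximality to exclude $\vr\le\vs$ and $\vs\le\vr$, and consistency to exclude $\sv\le\vr$, leaving $\vr\le\sv$. (Only a cosmetic slip: where you write ``with $\sv,\vr\in O$'' you mean $\vs,\vr\in O$, which is what the consistency condition requires.)
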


\begin{proof}
    Let~${\vr, \vs \in \sigma}$ with~${\vr \neq \vs}$ be given. 
    Then neither~${\vr \leq \vs}$ nor~${\vr\ge\vs}$ as both are maximal elements of~$O$. 
    The consistency of~$O$ implies that~${\vr \not\ge \sv}$, so~${\vr \leq \sv}$ is the only possible relation and hence~$\sigma$ is a star.
\end{proof}

A star~${\sigma \subseteq \tau}$ \emph{splits}~$\tau$, 
or is a \emph{splitting star} of~$ \tau $, 
if it is the set of maximal elements of a splitting orientation of~$\tau$. 
Note that every element of a finite tree set lies in a splitting star, but infinite tree sets can have elements that lie in no splitting star; 
see Example~\ref{ex:split-omega} and Lemma~\ref{lem:split-omega} below.

More generally, given a partial orientation~$P$ of~$\tau$, is it possible to extend it to a consistent orientation of~$\tau$? 
Of course~$P$ needs to be consistent itself for this to be possible. 
The next Lemma shows that under this necessary assumption it is always possible to extend a partial orientation to all of~$\tau$. 
In particular, every element of a tree set induces a consistent orientation in which it is a maximal element. 
This orientation is in fact unique:

\begin{lemma}[Extension Lemma]
    \label{lem:extension}
    \cite{ASS}
    Let~$S$ be a set of separations, and let~$P$ be a consistent partial orientation of~$S$.
    \begin{enumerate}
        [label=\textnormal{(\roman*)}]
        \item\label{item:ext-1} $P$ extends to a consistent orientation~$O$ of~$S$ if and only if no element of $P$ is co-trivial in~$S$.
        \item\label{item:ext-2} If~$\vp$ is maximal in~$P$, then~$O$ in~\ref{item:ext-1} can be chosen with~$\vp$ maximal in~$O$ if and only if~$\vp$ is nontrivial in~$\vS$.
        \item\label{item:ext-3} If~$S$ is nested, then the orientation~$O$ in \ref{item:ext-2} is unique.
    \end{enumerate}
\end{lemma}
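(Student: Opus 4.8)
The plan is to treat the three parts in order, the backward direction of~\ref{item:ext-1} carrying the real weight. For~\ref{item:ext-1}, the forward implication is immediate: if~$\vp\in P$ is co-trivial, so that~$\pv$ is trivial with some witness~$r$, then~$\pv\le\vr$ and~$\pv\le\rv$ while~$r\ne p$; any consistent orientation~$O\supseteq P$ would contain~$\vp$, but then it could orient~$r$ in neither direction, since both~$\vr$ and~$\rv$ lie strictly above~$\pv$. For the converse I would use Zorn's Lemma --- but \emph{not} on the poset of all consistent partial orientations extending~$P$, because a maximal such orientation need not be total: enlarging~$P$ can create co-trivial separations which then block all further orientation. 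Instead, let~$\cQ$ be the set of consistent partial orientations~$Q\supseteq P$ that contain no co-trivial element of~$S$. By hypothesis~$P\in\cQ$, and~$\cQ$ is closed under unions of chains, both defining conditions being finitary; so let~$O$ be a maximal element of~$\cQ$.

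To finish~\ref{item:ext-1} I would prove that~$O$ is total. The key observation is that adding an orientation~$\vt$ of a separation to a consistent~$O$ destroys consistency exactly when~$O$ already contains some~$\vr$ with~$\tv\le\vr$. Suppose now~$O$ does not orient some~$s$. If neither~$\vs$ nor~$\sv$ is co-trivial, then adding either of them to~$O$ keeps it in~$\cQ$ unless consistency fails; if \emph{both} additions fail, choose~$\vr_1,\vr_2\in O$ with~$\sv\le\vr_1$ and~$\vs\le\vr_2$. Inverting the first relation gives~$\rv_1\le\vs\le\vr_2$, so~$\rv_1\le\vr_2$, and consistency of~$O$ forces~$r_1=r_2=:r$; then~$\vr$ lies above both~$\vs$ and~$\sv$, so~$\rv$ is trivial with witness~$s$ and hence~$\vr\in O$ is co-trivial, contradicting~$O\in\cQ$. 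In the remaining case one of~$\vs,\sv$ is co-trivial, say~$\sv$; then~$\vs$ is trivial, hence small, hence not co-trivial, so adding~$\vs$ keeps~$O$ in~$\cQ$ unless~$O$ contains some~$\vr\ge\sv$ --- but then~$\vs\le\sv\le\vr$, making~$\vr$ co-trivial, again impossible. So in all cases~$O$ can be enlarged, contradicting maximality; thus~$O$ orients all of~$S$.

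For~\ref{item:ext-2}: if~$\vp$ is trivial in~$\vS$ with witness~$r$ then~$\vp<\vr$ and~$\vp<\rv$, so whichever orientation of~$r$ the total orientation~$O$ contains lies strictly above~$\vp$, and~$\vp$ is not maximal in~$O$. Conversely, assume~$\vp$ is nontrivial (we are in the situation of~\ref{item:ext-1}, so no element of~$P$ is co-trivial) and put~$P':=P\cup\{\rv: r\in S,\ \vr>\vp\}$. Short case checks --- each using that~$\vp$ is maximal in~$P$ and nontrivial in~$\vS$ --- show that~$P'$ is a partial orientation (a clash~$\vr,\rv\in P'$ would force either a separation above~$\vp$ into~$P$ or a witness to the triviality of~$\vp$), that~$P'$ is consistent, that~$P'$ contains no co-trivial separation (a co-trivial~$\rv$ with~$\vr>\vp$ would again make~$\vp$ trivial), and that~$\vp$ is maximal in~$P'$. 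Applying~\ref{item:ext-1} to~$P'$ then yields a consistent orientation~$O\supseteq P'\supseteq P$ of~$S$, and since~$\rv\in O$ for every~$r$ with~$\vr>\vp$, the separation~$\vp$ is maximal in~$O$.

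Finally~\ref{item:ext-3}: suppose~$S$ is nested and let~$O_1,O_2$ be consistent orientations of~$S$ extending~$P$ with~$\vp$ maximal. Fix~$r\in S$; by nestedness some orientation of~$r$ is comparable with~$\vp$. If an orientation~$\vr\ge\vp$ exists, then either~$\vr=\vp$, so~$r=p$ and both~$O_i$ contain~$\vp$, or~$\vr>\vp$, so~$\vr\notin O_i$ by maximality of~$\vp$ and hence~$\rv\in O_i$ for both~$i$. If instead an orientation~$\vr<\vp$ exists, then having~$\rv\in O_i$ alongside~$\vp\in O_i$ would give~$\pv\le\rv$ with~$p\ne r$, violating consistency, so~$\vr\in O_i$. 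In every case~$O_1$ and~$O_2$ agree on~$r$, so~$O_1=O_2$. The one genuinely delicate point in the whole argument is the refinement made in~\ref{item:ext-1}: Zorn's Lemma must be applied inside the class of partial orientations with no co-trivial element, as a merely maximal consistent extension of~$P$ can fail to be total.
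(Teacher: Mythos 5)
The paper does not prove this lemma but imports it from~\cite{ASS}, and your argument is correct and follows the standard route of that source: Zorn's Lemma applied to consistent extensions of~$P$ \emph{without co-trivial elements} for~\ref{item:ext-1} (your warning that maximal consistent extensions alone need not be total is exactly the right point), adjoining the inverses of all separations above~$\vp$ for~\ref{item:ext-2}, and nestedness forcing each orientation for~\ref{item:ext-3}. The only step you assert without justification is that a small separation is never co-trivial (used in the second case of~\ref{item:ext-1}); this is a one-liner, since $\vs\le\sv$ together with $\sv\le\vr,\rv$ gives $\vr\le\vs\le\sv\le\vr$ and hence $r=s$.
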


The last part of the Extension Lemma implies that every element~$\vs$ of a tree set~$\tau$ is maximal in exactly one consistent orientation~$O$ of~$\tau$. 
Hence~$\vs$ lies in a splitting star if and only if this~$O$ is splitting.

In an infinite tree set there might be elements that do not lie in a splitting star:

\begin{example}
    \label{ex:split-omega}
    Let~$\tau$ be the tree set with ground set
    \[ 
        \{ \vs_n \,|\, n \in \mathbb{N} \} \cup \{ \sv_n \,|\, n \in \mathbb{N} \} \cup \{ \vt, \tv \}, 
    \]
    where~${\vs_i \leq \vs_j}$ and~${\sv_i \geq \sv_j}$ whenever~${i\leq j}$, 
    as well as~${\vs_n \leq \vt}$ and~${\sv_n \geq \tv}$ for all~${n \in \mathbb{N}}$. 
    The separation~$\tv$ is maximal in the orientation
    \[ 
        O = \{ \vs_n \,|\, n \in \mathbb{N} \} \cup \{ \tv \}, 
    \]
    which is not splitting as no~$\vs_n$ lies below a maximal element of~$O$. 
    Hence~$\tv$ does not lie in a splitting star of~$\tau$.
\end{example}

In the above example the chain~${C = \{ \vs_n \,|\, n \in \mathbb{N} \} \cup \{\vt\}}$ has order-type~$ {\omega+1}$. 
But these $\omega+1$ chains turn out to be the only obstruction for separations not being elements of splitting stars, as the following lemma shows. 
Let us call a tree set that does not contain a chain of order type~${\omega+1}$ \emph{tame}.

\begin{lemma}
    \label{lem:split-omega}
    Every element of a tame tree set~$\tau$ lies in some splitting star of~$\tau$.
\end{lemma}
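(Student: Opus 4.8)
The plan is to fix an arbitrary element $\vs$ of the tame tree set $\tau$ and to show that the unique consistent orientation $O$ of $\tau$ in which $\vs$ is maximal --- it exists and is unique by the Extension Lemma --- is in fact splitting. By the observation made immediately after the Extension Lemma this is exactly what is needed, since then the set of maximal elements of $O$ is a splitting star of $\tau$ containing $\vs$. So the whole task reduces to the following: given an arbitrary $\vr \in O$, exhibit a maximal element of $O$ lying above $\vr$.

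To do this I would work with the set $D := \{\vp \in O \,|\, \vr \le \vp\}$ of all elements of $O$ above $\vr$; it contains $\vr$, so it is nonempty. The first step is to check that $D$ is a chain: if $\vp, \vu \in D$ were incomparable then, $\tau$ being nested, we would have $\vp \le \uv$ or $\uv \le \vp$. In the second case the consistency of $O$ forces $p = u$, hence $\vp = \vu$; in the first case $\vr \le \vp \le \uv$ together with $\vr \le \vu$ would exhibit $\vr$ as a trivial separation with witness $u$ unless $u = r$, in which case again $\vp$ and $\vu$ turn out comparable --- either way a contradiction. The second step is to show that $D$ has an upper bound in $\tau$. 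Here I would distinguish cases according to the position of $\vr$ relative to $\vs$: using that $\vs$ is maximal in $O$, that $O$ is consistent, and that $\tau$ has no trivial elements, one finds that either $\vr = \vs$ (so that $D = \{\vs\}$ already has a maximum), or $\vr \le \vs$, or $\vr \le \sv$; and in the latter two cases the same style of argument shows that $\vs$, respectively $\sv$, lies above every element of $D$.

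The decisive third step is where tameness enters, through the following elementary fact, which I would isolate as a claim: \emph{in a tame tree set, every chain that has an upper bound has a maximum.} Indeed, a nonempty chain without a maximum contains a strictly increasing sequence $\vx_0 < \vx_1 < \vx_2 < \cdots$, and any upper bound of the chain must be distinct from every $\vx_n$, hence lies strictly above all of them, producing a chain of order type $\omega+1$ --- which a tame tree set does not contain. Applied to $D$, using the bound found in the second step, this yields a maximum $\vm$ of $D$. A short concluding argument then shows that $\vm$ is in fact maximal in $O$: anything in $O$ above $\vm$ is above $\vr$ as well, hence lies in $D$, hence is at most $\vm$. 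Since $\vr \le \vm$, this proves that $O$ is splitting, and we are done.

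The main obstacle I anticipate is purely organisational: in the case analysis of the second step one has to keep careful track of which of the relations $\vr \le \vs$, $\sv \le \vr$, $\vp \le \sv$, $\sv \le \vp$, $\vs \le \vp$ (for $\vp \in D$) are ruled out by maximality of $\vs$, which by the consistency of $O$, and which by the absence of trivial separations in a tree set. Conceptually the only ingredient beyond the Extension Lemma is the observation about bounded chains in tame tree sets, and that observation is short.
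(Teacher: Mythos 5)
Your proposal is correct and follows essentially the same route as the paper: take the unique consistent orientation $O$ in which the given separation is maximal, observe via nestedness, consistency and nontriviality that the up-set of any $\vr\in O$ is bounded above by an orientation of the separation that is maximal in $O$, and use tameness to conclude that this up-set has a greatest element, which is then maximal in $O$. The paper merely phrases the last step contrapositively (a non-splitting $O$ would yield an $\omega$-chain with upper bound $\tv$, hence an $\omega{+}1$ chain), and skips your verification that the up-set is a chain, which is not needed for the argument.
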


\begin{proof}
    For every~${\vt \in \tau}$ we can apply the Extension Lemma~\ref{lem:extension} to~${P := \{ \vt \}}$ to find that there is a unique consistent orientation~$O$ 
    of~$\tau$ in which~$\vt$ is a maximal element. 
    Thus~$\vt$ lies in a splitting star if and only if this orientation~$O$ is splitting. 
    Let us show that for every~${\vt \in \tau}$ this orientation~$O$ splits~$\tau$ unless~$O$ contains a chain of order type~$\omega$ for which~$\tv$ is an upper bound; 
    this directly implies the claim since every such chain in~$O$ together with~$\tv$ is a chain of order type~${\omega+1}$ in~$\tau$.

    So let~${\vt \in \tau}$ be given and consider the unique consistent orientation~$O$ of~$\tau$ in which~$\vt$ is maximal. 
    Suppose that~$O$ does not split~$\tau$, i.e.\ that there is some~${\vs \in O}$ which does not lie below any maximal element of~$O$. 
    Consider the set~${C \subseteq O}$ of all elements~$\vr$ of~$O$ with~${\vr \geq \vs}$. 
    Since~$\vs$ and hence no element of~$C$ can lie below~$\vt$ we must have~${\vr \leq \tv}$ for all~${\vr \in C}$ since $\tau$ is nested. 
    Thus~$\tv$ is an upper bound for~$C$. 
    Now if~$C$ has a maximal element then this separation is also a maximal element of~$O$, contrary to our assumption about~$\vs$; 
    therefore~$C$ cannot have a maximal element and hence contains a chain of order type~$\omega$, as claimed.
\end{proof}

A direct consequence of Lemma~\ref{lem:split-omega} is that every element of a finite tree set lies in a splitting star.

Given two separation systems~$R$ and~$S$, 
a map~${f \colon R \to S}$ is a \emph{homomorphism} of separation systems if it \textit{commutes with the involution}, 
i.e.\ ${(f(\vr))^*=f(\rv)}$ for all~${\vr \in R}$, 
and is \textit{order-preserving}, 
i.e.\ ${f(\vr_1) \leq f(\vr_2)}$ whenever~${\vr_1 \leq \vr_2}$ for all~${ \vr_1, \vr_2 \in R}$.
Please note that the condition for~$f$ to be order-preserving is not `if and only if': 
it is allowed that~${f(\vr_1) \leq f(\vr_2)}$ for incomparable~${\vr_1, \vr_2 \in R}$. 
Furthermore~$f$ need not be injective.

As all trivial separations are small every regular nested separation system is a tree set. 
These two properties, regular and nested, are preserved by homomorphisms of separations systems, albeit in different directions: 
the image of nested separations is nested, and the preimage of regular separations is regular.

\begin{lemma}
    \label{lem:beforeIso}
    Let~${f \colon R \to S}$ be a homomorphism of separation systems. 
    If~$S$ is regular then so is~$R$; and if~$R$ is nested then so is its image in~$S$.
\end{lemma}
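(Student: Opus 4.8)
The plan is to prove the two assertions separately, each by a direct unwinding of the definitions. Throughout, write $f(\vr)$ for the image of $\vr \in R$, and recall that $f$ commutes with ${}^*$ and is order-preserving.

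\textit{Regularity pulls back.} Suppose $S$ is regular, and suppose for contradiction that some $\vr \in R$ is small, i.e.\ $\vr \le \rv$. Since $f$ is order-preserving we get $f(\vr) \le f(\rv)$, and since $f$ commutes with the involution $f(\rv) = (f(\vr))^*$. Hence $f(\vr) \le (f(\vr))^*$, so $f(\vr)$ is a small element of $S$, contradicting the regularity of $S$. Therefore no element of $R$ is small, i.e.\ $R$ is regular.

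\textit{Nestedness pushes forward.} Suppose $R$ is nested; we must show its image $f(R) \subseteq S$ is nested. Let $\vr_1', \vr_2' \in f(R)$ lie in distinct unoriented separations of the image, and pick $\vr_1, \vr_2 \in R$ with $f(\vr_i) = \vr_i'$. Since $R$ is nested, $r_1$ and $r_2$ have comparable orientations: after possibly replacing $\vr_1$ by $\rv_1$ and/or $\vr_2$ by $\rv_2$, we may assume $\vr_1 \le \vr_2$. Applying $f$ gives $f(\vr_1) \le f(\vr_2)$; using that $f$ commutes with ${}^*$, the corresponding replacements on the image side are exactly $\vr_1' \mapsto (\vr_1')^*$ and $\vr_2' \mapsto (\vr_2')^*$, so we have produced a pair of comparable orientations of the two image separations. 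Hence they are nested, and since this holds for every pair, $f(R)$ is nested.

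\textit{Expected obstacle.} There is essentially no hard step here; the only point requiring a little care is the bookkeeping in the nestedness argument, namely that passing to inverse orientations in $R$ corresponds (via the fact that $f$ commutes with ${}^*$) to passing to inverse orientations in the image, so that ``comparable orientations of $r_1,r_2$'' really does map to ``comparable orientations of the image separations.'' Once that correspondence is spelled out, both halves are immediate from order-preservation and involution-compatibility.
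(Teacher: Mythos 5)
Your proof is correct and follows essentially the same route as the paper's: the contrapositive for regularity (a small $\vr\in R$ maps to a small $f(\vr)\in S$ since $f$ is order-preserving and commutes with ${}^*$), and for nestedness the pullback of two image separations to comparable orientations in $R$, which are then pushed forward. The extra bookkeeping remark about inverses corresponding under $f$ is a harmless elaboration of what the paper leaves implicit.
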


\begin{proof}
    First suppose that some~${\vr \in R}$ is small, that is, that~${\vr \leq \rv}$. 
    Then
    \[ 
        f(\vr) \leq f(\rv) = (f(\vr))^*, 
    \]
    so~$S$ contains a small element. 
    Therefore if~$S$ is regular then $R$ must be regular as well.

    Now suppose that~$R$ is nested consider two unoriented separations~${s, s' \in S}$ and for which there are~${r, r' \in R}$ with~${s = f(r)}$ and~${s' = f(r')}$. 
    Since~$R$ is nested~$r$ and~$r'$ have comparable orientations, say~${\vr \leq \vrdash}$. 
    Then~${\vs := f(\vr) \leq f(\vrdash) =: \vsdash}$, showing that~$s$ and~$s'$ are nested. 
    Hence if~$R$ is nested its image in~$S$ is nested too.
\end{proof}

A bijection~${f \colon R \to S}$ is an \emph{isomorphism} of separation systems if both~$f$ and its inverse map are homomorphisms of separation systems. 
Two separation systems~$R$ and~$S$ are \emph{isomorphic}, denoted as~${R \cong S}$, if there is an isomorphism ${f\colon R \to S}$ of separation systems. 
If one of~$R$ and~$S$ (and thus both) is a tree set we call~$f$ an \emph{isomorphism of tree sets}.

Lemma~\ref{lem:beforeIso} makes it possible to show that a homomorphism~${f \colon R \to S}$ of separation systems is an isomorphism of tree sets without knowing beforehand that either~$R$ or~$S$ is a tree set:

\begin{lemma}
    \label{lem:Isomorphism}
    Let ${f \colon R \to S}$ be a bijective homomorphism of separation systems. 
    If~$R$ is nested and~$S$ regular then~$f$ is an isomorphism of tree sets.
\end{lemma}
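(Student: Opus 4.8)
The plan is to show that $f$ is an isomorphism of separation systems, and then to observe that both $R$ and $S$ are tree sets, so that $f$ is in fact an isomorphism of tree sets. Since $f$ is already assumed to be a bijective homomorphism, the only thing missing for $f$ to be an isomorphism of separation systems is that its inverse $g := f^{-1}$ is order-preserving; it automatically commutes with the involution, since for any $\vs \in S$ we have $f(g(\vs)^*) = (f(g(\vs)))^* = \vs^* = f(g(\vs^*))$ and $f$ is injective, so $g(\vs)^* = g(\vs^*)$.

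First I would record the two structural consequences of Lemma~\ref{lem:beforeIso}: since $S$ is regular, $R$ is regular; and since $R$ is nested, its image $f(R) = S$ (using surjectivity) is nested. A regular nested separation system is a tree set, as noted in the text (all trivial separations are small, hence absent), so both $R$ and $S$ are tree sets.

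The heart of the argument is verifying that $g$ is order-preserving, i.e.\ that if $\vs \leq \vsdash$ in $S$ then $g(\vs) \leq g(\vsdash)$ in $R$. Write $\vr := g(\vs)$ and $\vrdash := g(\vsdash)$, so $f(\vr) = \vs \leq \vsdash = f(\vrdash)$; we must deduce $\vr \leq \vrdash$. Since $R$ is nested, $r$ and $r'$ have comparable orientations, so one of the following holds: $\vr \leq \vrdash$, $\vrdash \leq \vr$, $\vr \leq \rvdash$, or $\rvdash \leq \vr$ (equivalently $\vr \leq \rvdash$). In the first case we are done. In each remaining case I apply the homomorphism $f$ and combine with the known inequality $\vs \leq \vsdash$ to force a degeneracy that contradicts regularity of $S$ (or collapses everything to equality, in which case $\vr \le \vrdash$ holds trivially). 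For instance, if $\vrdash \leq \vr$ then $\vsdash = f(\vrdash) \leq f(\vr) = \vs \leq \vsdash$, so $\vs = \vsdash$; applying $g$ and using injectivity gives $\vr = \vrdash$, hence $\vr \leq \vrdash$. If $\vr \leq \rvdash$ then $\vs = f(\vr) \leq f(\rvdash) = (f(\vrdash))^* = \svdash$, and together with $\vs \leq \vsdash$ this says $\vs$ lies below both $\vsdash$ and $\svdash$; but then $\vsdash \geq \vs$ and $\svdash \geq \vs$ give, via $\vs \leq \vsdash$ and $\vs \leq \svdash \leq$ (apply ${}^*$ to the first) $\vs \le \svdash$ and $\svdash \le \sv$ only if $\vsdash$ is small — more directly, $\vs \le \vsdash$ and $\vs \le \svdash$ yield $\svdash \le \sv$ from the first and $\vsdash \le \sv$; combining with $\vs \le \vsdash$ shows $\vsdash \le \sv$ and $\vs \le \vsdash \le \sv$, so if additionally $\vsdash \le \svdash$ we'd get a small element. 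The cleanest packaging: from $\vr \le \rvdash$ and $\vsdash \ge \vs \ge$ (nothing), instead use that $\vr \le \rvdash$ combined with $\vrdash \le$ ? — I expect the tidy route is: $\vr \le \rvdash$ and $\vr \le \vrdash$-or-$\vrdash \le \vr$; if also $\vrdash \le \vr$ then $\vrdash \le \vr \le \rvdash$ makes $\vrdash$ small, contradicting regularity of $R$; if $\vr \le \vrdash$ we are done; and the remaining sub-case $\vr \ge \vrdash$ was already handled. So the genuinely new work is just pushing the four nestedness cases through $f$ and eliminating the bad ones using regularity.

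The step I expect to be the main obstacle is this case analysis: one must be careful that "comparable orientations" of $r$ and $r'$ genuinely covers all four sign patterns, and that in each of the three non-trivial patterns the pushed-forward inequalities in $S$, together with $\vs \le \vsdash$, really do force either $\vs = \vsdash$ (whence $\vr = \vrdash$ by injectivity) or the existence of a small element of $S$, contradicting regularity. Once order-preservation of $g$ is established, $f$ is a bijective homomorphism with homomorphism inverse, hence an isomorphism of separation systems between two tree sets, hence an isomorphism of tree sets, completing the proof.
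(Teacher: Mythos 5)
Your overall architecture is exactly the paper's: invoke Lemma~\ref{lem:beforeIso} to conclude that $R$ and $S$ are regular tree sets, then verify that $f^{-1}$ is order-preserving by a case analysis on which orientations of $r$ and $r'$ are comparable, pushing each case through $f$ and using regularity of $S$ to eliminate the bad ones. However, two of your four cases are not actually closed. First, your parenthetical claim that $\rvdash\le\vr$ is ``equivalently $\vr\le\rvdash$'' is false: applying the involution to $\rvdash\le\vr$ gives $\rv\le\vrdash$, not $\vr\le\rvdash$. So the case $\rvdash\le\vr$ is never handled. It is the paper's third case and is immediate: applying $f$ gives $\svdash\le\vs\le\vsdash$, so $\svdash$ is small, contradicting the regularity of~$S$.

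Second, in the case $\vr\le\rvdash$ you correctly derive that $\vs$ lies below both $\vsdash$ and $\svdash$, and this is already the desired contradiction: if $s\ne s'$ it says $\vs$ is trivial, hence small, and if $s=s'$ it says directly that $\vs\le\sv$; either way $S$ fails to be a regular tree set. Instead of stating this, you drift into an inconclusive computation and then propose a ``cleanest packaging'' that is not valid: within the case $\vr\le\rvdash$ you may not additionally assume ``$\vr\le\vrdash$ or $\vrdash\le\vr$'', since nestedness of $r$ and $r'$ only guarantees that \emph{some} pair of orientations is comparable, namely the one you are already analysing. The repair in both places is the same one-line appeal the paper makes --- a regular tree set has no small and hence no trivial elements --- after which your argument coincides with the paper's proof.
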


\begin{proof}
    From Lemma~\ref{lem:beforeIso} it follows that both~$R$ and~$S$ are regular and nested, which means they are regular tree sets. 
    Therefore all we need to show is that the inverse of~$f$ is order-preserving, i.e.\ that~${\vr_1 \leq \vr_2}$ whenever~${f(\vr_1) \leq f(\vr_2)}$.
    Let~${\vr_1, \vr_2 \in R}$ with~${f(\vr_1) \leq f(\vr_2)}$ be given. 
    As~$R$ is nested,~$r_1$ and~$r_2$ have comparable orientations. 
    If~${\vr_1 \geq \vr_2}$, then~${f(\vr_1) = f(\vr_2)}$, implying~${\vr_1 = \vr_2}$ and hence the claim. 
    If~${\vr_1 \leq \rv_2}$, then~${f(\vr_1) \leq f(\vr_2),f(\rv_2)}$, contradicting the fact that~$S$ is a regular tree set. 
    Finally, if~${\vr_1 \geq \rv_2}$, then~${f(\rv_2) \leq f(\vr_2)}$, contradicting the fact that~$S$ is regular. 
    Hence~${\vr_1 \leq \vr_2}$, as desired.
\end{proof}

\section{Regular tame tree sets and graph-theoretical trees}
\label{sec:tame-tree-sets}

Every graph-theoretical tree~$T$ naturally gives rise to a tree set, its \emph{edge tree set}~$\tau(T)$ of~$T$ (see below for a formal definition). 
However, while every tree gives rise to a tree set, not every tree set `comes from' a tree. 
In this section we characterise those infinite tree sets that arise from graph-theoretical trees as the tree sets which are both regular and tame, i.e.~contain no chain of order-type~${\omega+1}$. 
More precisely, given a regular tame tree set~$\tau$ we will define a corresponding tree~${T(\tau)}$. 
These definitions in turn should be able to capture the essence of what it means to be `tree-like'. 
More precisely we want the following properties:
\begin{itemize}
    \item the tree constructed from the edge tree set of~$T$ is isomorphic to~$T$;
    \item the edge tree set of the tree constructed from~$\tau$ is isomorphic to~$\tau$.
\end{itemize}

\subsection{The edge tree set of a tree}
\label{sec:edge-tree-set}

Let~${T = (V,E)}$ be a graph-theoretical tree, finite or infinite. 
Let~$\vE(T)$ be the set of oriented edges of~$T$, that is
\[
    \vE(T) = \big\{ (x,y) \,\big|\, \{ x,y \} \in E(T) \big\}.
\]
We define an involution ${}^*$ by setting ${(x,y)^* := (y,x)}$ for all edges~${xy \in E(T)}$, and 
a partial order~$\leq$ on~$\vE(T)$ by setting ${(x,y) < (v,w)}$ for edges~${xy, vw \in E(T)}$ 
if and only if~${\{ x,y \} \neq \{ v,w \}}$ and the unique $\{x,y\}$--$\{v,w\}$-path in~$T$ joins~$y$ to~$v$. 
Then the \emph{edge tree set}~${\tau(T)}$ is the separation system~${(\vE(T),\le,*)}$. 
It is straightforward to check that~$\tau(T)$ is indeed a regular tree set.

Note that every maximal chain in~${\tau(T)}$ corresponds to the edge set of a path, ray or double ray in~$T$. 
Hence $\tau(T)$ does not contain any chain of length~${\omega+1}$ and hence is tame.

\vspace{0.2cm}

If~$T$ is the decomposition tree of a tree-decomposition of a graph~$G$, then the tree set~$\tau(T)$ is isomorphic to the tree set formed by the separations of~$G$ that correspond\footnote{An edge~$e$ of the decomposition tree~$T$ of a tree-decomposition naturally defines a graph separation by considering the union of the parts in the respective components of $T - e$ as the sides of that separation.} to the edges of~$T$ (with some pathological exceptions).
This relationship between tree-decompositions and tree sets was further explored in~\cite{TreeSets}.

\subsection{The tree of a regular tame tree set}
\label{sec:tree-of-tame-tree-set}

Let~$\tau$ be a regular tame tree set. 
Our aim is to construct a corresponding graph-theoretical tree $T(\tau)$.
Recall that a consistent orientation~$O$ of~$\tau$ is called splitting if every element of~$O$ lies below some maximal element of~$O$. 
By the uniqueness part of the Extension Lemma~\ref{lem:extension}, every splitting star extends to exactly one splitting orientation. 
Write $\vOc$ for the set of all splitting orientations of~$\tau$. 
We will use $\vOc$ as the vertex set of $T(\tau)$. 
Moreover note that it will turn out that the non-splitting orientations will precisely correspond to the ends of $T(\tau)$.

Let us show first that, for any two splitting stars, each of them contains exactly one element that is inconsistent with the other star. 
We will later use this little fact when we define the edges of our tree.

\begin{lemma}
    \label{lem:flipping_stars}
    Let~${\sigma_1, \sigma_2}$ be two distinct splitting stars of~$\tau$ and~${O_2 \in \vOc}$ the orientation inducing~$\sigma_2$. 
    Then there is exactly one~${\vs \in \sigma_1}$ with~${\sv \in O_2}$.
\end{lemma}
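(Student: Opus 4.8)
The plan is to prove the two halves of the statement separately --- ``at most one such $\vs$'' and ``at least one such $\vs$'' --- in both cases exploiting the consistency of $O_2$. Throughout, write $O_1$ for the splitting orientation of $\tau$ whose set of maximal elements is $\sigma_1$; it exists and is unique by the remark following the Extension Lemma~\ref{lem:extension} that every splitting star extends to exactly one splitting orientation.

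\emph{At most one.} Suppose $\vs,\vs'\in\sigma_1$ were distinct with $\sv,\sv'\in O_2$. Since $\sigma_1$ is a star we have $\vs\le\sv'$, and applying the order-reversing involution turns this into $\vs'\le\sv$. As $\vs'$ is the inverse of $\sv'\in O_2$ and $\sv\in O_2$, the consistency of $O_2$ forces $s=s'$; but then $\sv$ and $\sv'$ are orientations of the same separation, both lying in $O_2$, hence equal, contradicting $\vs\neq\vs'$.

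\emph{At least one.} Suppose, for contradiction, that no element of $\sigma_1$ has its inverse in $O_2$; since $O_2$ is an orientation this means $\sigma_1\subseteq O_2$. I claim this forces $O_1\subseteq O_2$, hence $O_1=O_2$ (both being orientations of all of $\tau$), and therefore $\sigma_1=\sigma_2$, contradicting that the two stars are distinct. To prove the claim, take $\vr\in O_1$. Since $O_1$ is splitting there is some $\vs\in\sigma_1$ with $\vr\le\vs$, and $\vs\in O_2$ because $\sigma_1\subseteq O_2$. Were $\rv\in O_2$, then $\vr\le\vs$ gives $\sv\le\rv$, so the consistency of $O_2$ applied to $\vs,\rv\in O_2$ would yield $r=s$; but $\vr=\vs$ would place both orientations of $s$ in $O_2$, while $\vr=\sv$ would turn $\vr\le\vs$ into $\sv\le\vs$, making $s$ small and violating the regularity of $\tau$. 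Hence $\vr\in O_2$, as claimed.

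The routine-but-delicate part is keeping track of the order-reversing involution when translating between the star property of $\sigma_1$, the splitting property of $O_1$, and the consistency of $O_2$. The one genuinely non-formal ingredient is the appeal to the regularity of $\tau$ to exclude the case $\vr=\sv$ in the second half; this is precisely where the standing assumption that $\tau$ is a regular, tame tree set enters. Combining the two halves yields exactly one $\vs\in\sigma_1$ with $\sv\in O_2$.
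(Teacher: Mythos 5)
Your proof is correct and follows essentially the same route as the paper's: uniqueness comes from the star property of $\sigma_1$ together with the consistency of $O_2$, and existence from the observation that $\sigma_1\subseteq O_2$ would force $O_2$ to induce $\sigma_1$ (you spell out the intermediate step $O_1\subseteq O_2$, which the paper leaves implicit). One small remark: your appeal to regularity to rule out the case $\vr=\sv$ is avoidable --- in that case both $\vr$ and $\vs$ would lie in $O_1$, which contains only one orientation of each separation --- and this matters slightly because the paper explicitly notes that the lemma holds for \emph{every} tree set, with no assumptions on $\tau$.
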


\begin{proof}
    There is at least one such~$\vs$ as~$O_2$ does not induce $\sigma_1$. 
    For any two~${\vr, \vs \in \sigma}$ the set~${\{ \rv, \sv \}}$ is inconsistent, so there is at most one ${\vs \in \sigma_1}$ with~${\sv \in O_2}$.
\end{proof}

Note that this lemma holds for every tree set as the proof did not use any assumptions on~$\tau$.

Our assumption that~$\tau$ is tame implies the following sufficient condition for a consistent orientation to be splitting:

\begin{lemma}
    \label{lem:one_closed}
    Let~$O$ be a consistent orientation of~$\tau$ with at least one maximal element. 
    Then~$O$ splits~$\tau$.
\end{lemma}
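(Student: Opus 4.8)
The plan is to deduce the lemma from Lemma~\ref{lem:split-omega} together with the uniqueness part of the Extension Lemma~\ref{lem:extension}. Fix a maximal element~$\vm$ of the given consistent orientation~$O$.

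First I would record that, since~$\tau$ is a tree set, the separation~$\vm$ is nontrivial in~$\tau$ and its inverse~$\mv$ is not trivial in~$\tau$ either; in particular~$\vm$ is neither trivial nor co-trivial in~$\tau$. Applying the Extension Lemma~\ref{lem:extension}\,(i)--(iii) to the consistent partial orientation~${P := \{\vm\}}$ (in which~$\vm$ is trivially maximal), and using that~$\tau$ is nested, we obtain that there is a \emph{unique} consistent orientation of~$\tau$ in which~$\vm$ is maximal; this unique orientation is of course~$O$ itself.

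Next I would invoke Lemma~\ref{lem:split-omega}: as~$\tau$ is tame, $\vm$ lies in some splitting star~$\sigma$ of~$\tau$. By definition~$\sigma$ is the set of maximal elements of a splitting orientation~$O'$ of~$\tau$, so~$\vm$ is a maximal element of the consistent orientation~$O'$. By the uniqueness just established, $O' = O$. Since~$O'$ splits~$\tau$, so does~$O$, which is the claim.

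The only point that needs care is making sure that the splitting orientation produced by Lemma~\ref{lem:split-omega} is literally the same orientation as~$O$; this is exactly what the uniqueness in the Extension Lemma~\ref{lem:extension}\,(iii) delivers, once one observes that membership of~$\vm$ in the splitting star~$\sigma$ means precisely that~$\vm$ is maximal in the associated orientation. I do not expect a genuine obstacle. If one preferred a self-contained argument avoiding Lemma~\ref{lem:split-omega}, one could instead mimic its proof directly: supposing that~$O$ does not split, pick~${\vs \in O}$ lying below no maximal element of~$O$, set~${C := \{\, \vr \in O : \vr \geq \vs \,\}}$, check that~$C$ is a chain with no maximal element (hence contains a chain of order type~$\omega$), and verify, using consistency and the maximality of~$\vm$, that~${\vr \leq \mv}$ for every~${\vr \in C}$; then~${C \cup \{\mv\}}$ would be a chain of order type~${\omega+1}$ in~$\tau$, contradicting tameness.
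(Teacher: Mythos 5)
Your proof is correct and follows essentially the same route as the paper: invoke Lemma~\ref{lem:split-omega} to place the maximal element in a splitting star, then use the uniqueness part of the Extension Lemma~\ref{lem:extension} to identify the associated splitting orientation with~$O$. The extra details you supply (nontriviality of the maximal element, and the sketch of a self-contained alternative) are fine but not needed beyond what the paper records.
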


\begin{proof}
    Let~$\vt$ be a maximal element of~$O$. 
    By Lemma~\ref{lem:split-omega}~$\vt$ lies in a splitting star of~$\tau$, 
    i.e.\ is a maximal element of a consistent orientation that splits~$\tau$. 
    By the Extension Lemma~\ref{lem:extension},~$O$ is the only consistent orientation of~$\tau$ of which~$\vt$ is a maximal element; 
    hence~$O$ must be splitting.
\end{proof}

Together with the Extension Lemma~\ref{lem:extension} this immediately implies the following:

\begin{corollary}
    \label{cor:unique_star}
    Every~${\vs \in \tau}$ lies in exactly one splitting star of~$\tau$. 
    Equivalently every~${\vs \in \tau}$ is maximal in exactly one consistent orientation~$O$ and~${O \in \vOc}$.
\end{corollary}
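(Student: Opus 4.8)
The plan is to derive everything directly from the Extension Lemma~\ref{lem:extension}, together with Lemmas~\ref{lem:split-omega} and~\ref{lem:one_closed}. First I would record the two elementary facts about tree sets that license applying the Extension Lemma: since $\tau$ has no trivial elements, it has no co-trivial elements either (a separation $\sv$ is co-trivial in $\tau$ precisely when $\vs$ is trivial), and every $\vs \in \tau$ is in particular nontrivial in $\tau$.

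Next, fix $\vs \in \tau$ and apply the Extension Lemma to the partial orientation ${P := \{\vs\}}$, which is consistent and in which $\vs$ is maximal. By part~\ref{item:ext-1} (no element of $P$ is co-trivial in $\tau$) and part~\ref{item:ext-2} ($\vs$ is nontrivial in $\tau$) there is a consistent orientation $O$ of $\tau$ extending $P$ with $\vs$ maximal in $O$; by part~\ref{item:ext-3}, since $\tau$ is nested, this $O$ is the \emph{unique} consistent orientation of $\tau$ of which $\vs$ is a maximal element. Since $O$ has the maximal element $\vs$, Lemma~\ref{lem:one_closed} gives that $O$ splits $\tau$, i.e.\ ${O \in \vOc}$. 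This establishes the second formulation of the corollary.

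Finally I would translate this into the statement about splitting stars. Recall, as noted before Lemma~\ref{lem:flipping_stars}, that by the uniqueness part of the Extension Lemma every splitting star is the set of maximal elements of exactly one splitting orientation, and conversely; moreover $\vs$ belongs to a splitting star $\sigma$ if and only if $\vs$ is a maximal element of the splitting orientation inducing $\sigma$. Hence the splitting stars containing $\vs$ correspond bijectively to the splitting orientations of which $\vs$ is maximal, and we have just shown there is exactly one such orientation, namely $O$; so $\vs$ lies in exactly one splitting star, the star of maximal elements of $O$. (Existence of at least one splitting star through $\vs$ also follows independently from Lemma~\ref{lem:split-omega}, but it is already contained in the argument above.)

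I do not expect a genuine obstacle here. The only points requiring care are invoking the correct hypotheses of the Extension Lemma --- that a tree set contains neither trivial nor co-trivial elements, so that both parts~\ref{item:ext-1} and~\ref{item:ext-2} apply to $P = \{\vs\}$ --- and keeping the correspondence between splitting stars and splitting orientations explicit, so that the two occurrences of ``exactly one'' in the statement genuinely match.
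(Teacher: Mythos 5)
Your proposal is correct and follows essentially the same route as the paper: apply the Extension Lemma to $\{\vs\}$ to obtain the unique consistent orientation in which $\vs$ is maximal, then invoke Lemma~\ref{lem:one_closed} to conclude it is splitting. The extra care you take in checking the hypotheses of the Extension Lemma and in making the star--orientation correspondence explicit is fine but not a different argument.
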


\begin{proof}
    For~${\vs \in \tau}$ apply the Extension Lemma~\ref{lem:extension} to~${\{ \vs \}}$ to obtain a unique consistent orientation~$O$ of~$\tau$ in which~$\vs$ is a maximal element. 
    It then follows from Lemma~\ref{lem:one_closed} that~$O$ is splitting.
\end{proof}

For~${\vs \in \tau}$ write~${O(\vs)}$ for the unique consistent orientation of~$\tau$ in which~$\vs$ is maximal. 
Then Lemma~\ref{lem:flipping_stars} together with Corollary~\ref{cor:unique_star} says that for distinct~${O, O' \in \vOc}$ there is at most one~${\vs \in O'}$ with~${O(\sv) = O}$.

\vspace{0.2cm}

Now we define the graph $T(\tau)$. 
Let~${V(T(\tau)) = \vOc}$ and
\[
    E(T(\tau)) = \big\{ \{ O(\vs), O(\sv) \} \,\big|\, \vs \in \tau \big\}.
\]
We call~$T(\tau)$ the \emph{tree corresponding to~$\tau$}, 
where~$\tau$ is a regular tame tree set.

First note that $T(\tau)$ does not contain any loops and hence is indeed a simple graph since~$O(\vs)$ and $O(\sv)$ are different for any~${\vs \in \tau}$.

We need to check that~$T(\tau)$ is a tree. 

\begin{lemma}
    \label{lem:T_acyclic}
    $T(\tau)$ does not contain any cycles.
\end{lemma}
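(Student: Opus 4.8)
The plan is to argue by contradiction. Suppose $T(\tau)$ contained a cycle. Since $T(\tau)$ has no loops (as noted just above) and $E(T(\tau))$ is a set, so that $T(\tau)$ has no parallel edges either, this cycle would have some length $n \ge 3$; write its vertices cyclically as $O_0, O_1, \dots, O_{n-1}$, with all indices from now on read modulo $n$, so that $\{O_i, O_{i+1}\}$ is an edge of $T(\tau)$ for every $i$. By the definition of $E(T(\tau))$, each such edge is of the form $\{O(\vr), O(\rv)\}$ for some $\vr \in \tau$; replacing the witnessing separation by its inverse if need be, we may therefore fix, for every $i$, a separation $\vs_i \in \tau$ with $O(\vs_i) = O_i$ and $O(\sv_i) = O_{i+1}$.

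The crux is a single application of Lemma~\ref{lem:cons_star}. By the definition of the map $O(\cdot)$, the separation $\vs_i$ is a maximal element of $O_i = O(\vs_i)$, and the separation $\sv_{i-1}$ is a maximal element of $O_i = O(\sv_{i-1})$. These two maximal elements of $O_i$ are distinct: if $\sv_{i-1} = \vs_i$ held, then $s_{i-1} = s_i$, and hence the cycle edges $\{O_{i-1}, O_i\}$ and $\{O_i, O_{i+1}\}$ would coincide, which is impossible since a cycle of length at least three has pairwise distinct edges. By Lemma~\ref{lem:cons_star} the set of maximal elements of the consistent orientation $O_i$ is a star, so from $\sv_{i-1} \ne \vs_i$ we obtain $\sv_{i-1} \le (\vs_i)^* = \sv_i$, which is equivalent to $\vs_{i-1} \ge \vs_i$.

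As this holds for every $i$ modulo $n$, we get $\vs_0 \ge \vs_1 \ge \dots \ge \vs_{n-1} \ge \vs_0$, so that all the $\vs_i$ are equal. But then all the edges $\{O(\vs_i), O(\sv_i)\}$ of our cycle coincide, contradicting $n \ge 3$. Hence $T(\tau)$ contains no cycle.

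I do not expect a genuine obstacle here: the argument is short, and beyond the definition of $T(\tau)$ itself it invokes only the star property of Lemma~\ref{lem:cons_star} together with the well-definedness of $O(\cdot)$ (Corollary~\ref{cor:unique_star}); neither regularity nor tameness of $\tau$ is used explicitly in this step. The one point needing a little care is the bookkeeping around the cycle: one must note that consecutive edges of a cycle of length at least three are distinct (which is what gives $\sv_{i-1} \ne \vs_i$), and that once an endpoint of each edge has been labelled $O_i$ the choice of the associated oriented separation $\vs_i$ is forced, so there is no hidden incompatibility when the inequalities are chained around the whole cycle.
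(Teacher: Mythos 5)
Your proof is correct and follows essentially the same route as the paper: consecutive oriented edges of the putative cycle are two distinct maximal elements of the consistent orientation at their common vertex, the star property makes them comparable, and chaining these inequalities around the cycle forces all the separations to coincide, a contradiction. The only cosmetic differences are your reversed orientation convention (yielding $\geq$ rather than $\leq$) and your invoking Lemma~\ref{lem:cons_star} directly instead of the phrase ``splitting star induced by $O$''.
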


\begin{proof}
    For~${O \in \vOc}$ the set of incoming edges is precisely the splitting star induced by~$O$. 
    If~${\vs_1, \dots, \vs_k}$ are the edges of an oriented cycle in~$\vT$, 
    then each of these and the inverse of its cyclic successor lie in a common splitting star. 
    Hence~${\vs_1 \leq \vs_2 \leq \dots \leq \vs_k \leq \vs_1}$ by the star property, a contradiction.
\end{proof}

To prove that~$T(\tau)$ is connected, our strategy is as follows. 
To find a path from~${O \in \vOc}$ to~${O' \in \vOc}$ we use Lemma~\ref{lem:flipping_stars} to find~${\vs \in O}$ which is maximal in~$O$ with~${\sv \in O'}$. 
Then we consider~${O^* := (O \cup \{\sv\}) \setminus \{\vs\}}$.
This orientation is again in~$\vOc$ and a neighbour of~$O$ in~$T(\tau)$. 
If~${O^* = O'}$ we are done; 
otherwise we can iterate the process with~$O^*$ and~$O'$. 
Either this process terminates after finitely many steps, in which case we found a path from~$O$ to~$O'$, or it continues indefinitely. 
In the latter case the infinitely many separations we inverted form a chain with an upper bound in~$O'$, which would yield a chain of order type~${\omega+1}$.

The next short Lemma forms the basis of this iterative flipping process.

\begin{lemma}
    \label{lem:infinite_flips}
    Let~${\vs_1, \dots, \vs_n, \vsdash \in \tau}$ be distinct separations with~${O(\sv_{k+1}) = O(\vs_k)}$ for all ${k \in \mathbb{N}}$ with ${1 \leq k < n}$ and~${\vs_n < \vsdash}$. 
    Then there is a~${\vs_{n+1} \in \tau}$ with~${O(\sv_{n+1}) = O(\vs_n)}$ and~${\vs_{n+1} \leq \vsdash}$.
\end{lemma}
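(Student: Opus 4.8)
The statement I want to prove is Lemma~\ref{lem:infinite_flips}: given a chain of flips $O(\sv_{k+1}) = O(\vs_k)$ for $1 \le k < n$, and some $\vsdash > \vs_n$, I need to produce $\vs_{n+1}$ with $O(\sv_{n+1}) = O(\vs_n)$ and $\vs_{n+1} \le \vsdash$. The natural candidate is to look \emph{inside} the orientation $O(\vs_n)$ for a suitable separation. Let me write $O_n := O(\vs_n)$ for brevity. Since $\vs_n$ is maximal in $O_n$ and $\vs_n < \vsdash$, consistency of $O_n$ forces $\vsdash \notin O_n$, so $\sv' \in O_n$. The plan is to show that $\sv'$ lies below some \emph{maximal} element $\vt$ of $O_n$ (using that $O_n$ is splitting), and then argue that this $\vt$ is the $\vs_{n+1}$ we want — up to checking $\vt \ne \vs_n$ and $\vt \le \vsdash$.

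**Carrying it out.** First I would invoke that $O_n = O(\vs_n)$ is splitting (this is Corollary~\ref{cor:unique_star}), so the element $\sv' \in O_n$ lies below some maximal element $\vt$ of $O_n$; set $\vs_{n+1} := \vt$. By construction $\vs_{n+1}$ is maximal in $O_n$, so by the uniqueness clause of the Extension Lemma (part~\ref{item:ext-3}, using nestedness of the tree set) $O(\vs_{n+1}) = O_n = O(\vs_n)$, which is the first required equality after noting the indexing $O(\sv_{n+1})$ versus $O(\vs_n)$ — I must be careful here: the claim asks for $O(\sv_{n+1}) = O(\vs_n)$, matching the pattern of the hypothesis, so actually I want $\sv_{n+1}$, not $\vs_{n+1}$, to be maximal in $O(\vs_n)$. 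So instead I should take $\vs_{n+1} := \tv$ where $\vt$ is the maximal element of $O_n$ above $\sv'$; then $\sv_{n+1} = \vt$ is maximal in $O_n = O(\vs_n)$, giving $O(\sv_{n+1}) = O(\vs_n)$ as desired. It remains to check $\vs_{n+1} \le \vsdash$, i.e. $\tv \le \vsdash$. We have $\sv' \le \vt$, hence $\vsdash \le \tv$'s inverse$= \vs_{n+1}$... wait, that gives the wrong direction, so let me reconsider: from $\sv' \le \vt$ we get $\tv \le \vsdash$ by applying the involution, which is exactly $\vs_{n+1} = \tv \le \vsdash$. Good, that is the second required inequality.

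**Distinctness.** I also need $\vs_{n+1}$ to be distinct from $\vs_1, \dots, \vs_n$ (the statement says "distinct separations", and the intended use is to keep extending an infinite chain, so the new one must differ from all earlier ones). The key point is $\vs_{n+1} \ne \vs_n$: if $\vs_{n+1} = \vs_n$ then $\sv' \le \vt = \sv_n$, so $\vs_n \le \vsdash$ and also $\vsdash \le \vs_n$... actually $\sv' \le \sv_n$ gives $\vs_n \le \vsdash$, combined with the hypothesis $\vs_n < \vsdash$ this is consistent, so I need a different argument. Better: $\vs_{n+1} = \tv$ is maximal in $O(\vs_n)$, and $\vs_n$ is also maximal in $O(\vs_n)$; two distinct maximal elements of a consistent orientation form a star (Lemma~\ref{lem:cons_star}), so if $\vs_{n+1} \ne \vs_n$ we get $\vs_{n+1} \le \sv_n$. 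Meanwhile from $\sv' \le \vt = \sv_{n+1}$ and $\vs_n < \vsdash$, i.e. $\sv' < \sv_n$, nestedness should let me derive a contradiction if $\vs_{n+1}$ coincided with an \emph{earlier} $\vs_k$; for $\vs_{n+1} = \vs_n$ specifically, I argue: if $\vs_{n+1} = \vs_n$ then $\vt = \sv_n$, but $\vt$ is maximal in $O(\vs_n)$ while $\sv_n \notin O(\vs_n)$ (since $\vs_n \in O(\vs_n)$ and the tree set is regular, $\vs_n$ is not small), contradiction. That settles $\vs_{n+1} \ne \vs_n$ cleanly. For distinctness from $\vs_k$ with $k < n$: since $O(\sv_{n}) = O(\vs_{n-1})$ etc., the orientations $O(\vs_1), \dots, O(\vs_n)$ are... not obviously distinct, but $\vs_k$ being an edge of the path means $\vs_k \notin O(\vs_n)$ would follow from the path structure — I expect the cleanest route is a short induction argument that the $O(\vs_k)$ are pairwise distinct vertices, already implicitly established in the surrounding text, so I can cite the iterative flipping setup.

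**Main obstacle.** The substantive content is light — it is essentially "a separation in a splitting orientation lies below a maximal element, which is again maximal in the same orientation" — so the real care goes into the bookkeeping: getting the direction of the involution right everywhere (I flip-flopped above), correctly identifying whether $\vs_{n+1}$ or $\sv_{n+1}$ should be the maximal element to match the hypothesis's pattern, and nailing down the distinctness claim, which is the only place where the hypothesis $\vs_n < \vsdash$ (strict) and regularity of $\tau$ genuinely get used. I would structure the write-up as: (1) $\sv' \in O(\vs_n)$ by consistency and maximality of $\vs_n$; (2) apply splitting to get a maximal $\vt \ge \sv'$ and set $\vs_{n+1} := \tv$; (3) $O(\sv_{n+1}) = O(\vs_n)$ by Extension Lemma uniqueness; (4) $\vs_{n+1} = \tv \le \vsdash$ from $\sv' \le \vt$; (5) distinctness from $\vs_n$ via regularity, and from earlier $\vs_k$ via the established fact that the flipping process visits distinct orientations.
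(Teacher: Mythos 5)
Your argument is correct, and it ends up selecting the same separation as the paper --- $\sv_{n+1}$ is the maximal element of~${O(\vs_n)}$ lying above~$\svdash$ --- but it gets there by a different route. The paper takes $\vs_{n+1}$ to be the unique element of~${O(\vsdash)}$ with ${O(\sv_{n+1}) = O(\vs_n)}$ (via Lemma~\ref{lem:flipping_stars} together with Corollary~\ref{cor:unique_star}), deduces ${\vs_n \le \vs_{n+1}}$ from the star property, and then excludes ${\vs_{n+1} \le \svdash}$ because that would make~$\vs_n$ trivial. You instead note that ${\svdash \in O(\vs_n)}$ (maximality of~$\vs_n$ plus ${\vs_n < \vsdash}$ forbids ${\vsdash \in O(\vs_n)}$), use that ${O(\vs_n)}$ is splitting to find a maximal ${\vt \ge \svdash}$, and set ${\vs_{n+1} := \tv}$; the inequality ${\vs_{n+1} \le \vsdash}$ then drops out of the order-reversing involution with no case analysis and no appeal to Lemma~\ref{lem:flipping_stars}. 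That is a slightly more economical derivation of the inequality; the paper's version makes the ``first edge on the path towards ${O(\vsdash)}$'' picture more explicit, which is what Lemma~\ref{lem:T_connected} exploits. Two small remarks: the distinctness discussion in your final paragraph is not needed for the statement as posed --- distinctness of ${\vs_1, \dots, \vs_n, \vsdash}$ is a hypothesis, and the conclusion does not assert that $\vs_{n+1}$ differs from the earlier terms --- and in your argument that ${\vs_{n+1} \ne \vs_n}$ you do not need regularity, since an orientation contains exactly one of ${\vs_n, \sv_n}$ by definition.
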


\begin{proof}
    Let~$\vs_{n+1}$ be the unique separation in~${O(\vsdash)}$ with~${O(\sv_{n+1}) = O(\vs_n)}$. 
    Then~${\vs_n \leq \vs_{n+1}}$ by the star property. 
    Hence if~${\vs_{n+1} \leq \svdash}$, then~$\vs_n$ would be trivial, therefore~${\vs_{n+1} \leq \vsdash}$ as desired.
\end{proof}

For~${\vs_1, \dots, \vs_n, \vsdash}$ and~$\vs_{n+1}$ as in Lemma~\ref{lem:infinite_flips} there is an edge between~${O(\vs_k)}$ and~${O(\sv_{k+1})}$ for every~${1 \leq k \leq n}$. 
Additionally if~${\vs_{n+1} \neq \vsdash}$ then~${\vs_1, \dots, \vs_{n+1}, \vsdash}$ again fulfill the assumptions of the lemma, so it can be used iteratively.

Furthermore note that~${\vs_1 \leq \vs_2 \leq \dots \leq \vs_n \leq \vs_{n+1}}$, so if this iteration does not terminate the~$\vs_k$ form an infinite chain. 
From this we now prove that~$T(\tau)$ is connected.

\begin{lemma}
    \label{lem:T_connected}
    $T(\tau)$ is connected.
\end{lemma}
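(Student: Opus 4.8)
The plan is to make rigorous the iterative flipping strategy sketched just before the lemma. Fix two vertices $O, O' \in \vOc$; I want to produce a finite walk between them in $T(\tau)$. If $O = O'$ there is nothing to do, so assume $O \neq O'$. By Lemma~\ref{lem:flipping_stars} applied to the splitting star $\sigma$ induced by $O$ and the orientation $O'$, there is exactly one $\vs_1 \in \sigma$ with $\sv_1 \in O'$; in particular $\vs_1$ is maximal in $O$, so $O = O(\vs_1)$, and we set $O_1 := O(\sv_1)$, which is a neighbour of $O$ in $T(\tau)$ by definition of the edge set. If $O_1 = O'$ we are done. Otherwise we repeat: having built $\vs_1, \dots, \vs_n$ with $O(\sv_{k+1}) = O(\vs_k)$ and $O_n := O(\sv_n) \neq O'$, apply Lemma~\ref{lem:flipping_stars} to the splitting star induced by $O_n$ and to $O'$ to obtain the unique separation $\vs_{n+1}$ that is maximal in $O_n$ with $\sv_{n+1} \in O'$; then $O(\vs_{n+1}) = O_n = O(\sv_n)$, so $\vs_1, \dots, \vs_{n+1}$ satisfy the hypotheses of Lemma~\ref{lem:infinite_flips} (with any common upper bound in $O'$ of the relevant separations playing the role of $\vsdash$ — more on this below), and in particular $\vs_1 \leq \vs_2 \leq \dots \leq \vs_{n+1}$ by the star property, so all the $\vs_k$ are distinct. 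Each step produces an edge $\{O(\vs_k), O(\sv_{k+1})\} = \{O_{k-1}, O_k\}$ of $T(\tau)$ (writing $O_0 := O$), so these vertices trace a walk in $T(\tau)$ starting at $O$.

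It remains to argue that this process terminates, i.e. that $O_n = O'$ for some finite $n$. Suppose not. Then we obtain an infinite chain $\vs_1 \leq \vs_2 \leq \dots$ in $\tau$; by tameness this chain cannot have order type $\omega$ with an upper bound in $\tau$, so to derive a contradiction it suffices to exhibit an element of $\tau$ that lies above every $\vs_k$. Here I use $O'$: I claim $\sv_k \in O'$ for every $k$, hence $\vs_k \notin O'$, and since $\vs_k$ is nested with every member of the orientation $O'$ (as $\tau$ is nested and $O'$ orients all of $\tau$), every $\vr \in O'$ satisfies $\vr \geq \sv_k$ or $\vr \leq \sv_k$; using consistency of $O'$ together with $\sv_k \in O'$ and $\vs_k \notin O'$, we get that for a maximal element $\vr$ of $O'$ above $\sv_k$ — which exists because $O'$ is \emph{splitting} — one has $\vs_k \leq \vr$. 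Since $O'$ has only finitely many — in fact, by Lemma~\ref{lem:cons_star}, a star's worth of — maximal elements is not guaranteed, I instead argue: fix the single separation $\vr^\star$ that is the maximal element of $O'$ lying above $\sv_1$ (it lies above $\sv_1$ hence above $\sv_k \geq \sv_1$'s… ) — more carefully, since $\vs_1 \leq \vs_k$ gives $\sv_1 \geq \sv_k$, and $\sv_1 \in O'$ with $\vr^\star \geq \sv_1 \geq \sv_k$, the same maximal element $\vr^\star$ of $O'$ dominates every $\sv_k$, and then by the argument above $\vs_k \leq \vr^\star$ for all $k$. Thus $\{\vs_k : k \in \NN\} \cup \{\vr^\star\}$ is a chain of order type $\omega+1$ in $\tau$, contradicting tameness.

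The main obstacle, and the step needing the most care, is precisely the verification that $\sv_k \in O'$ for every $k$ and that the $\vs_k$ all lie below the \emph{fixed} maximal element $\vr^\star$ of $O'$: the first needs an inductive check that the separation selected at stage $k$ by Lemma~\ref{lem:flipping_stars} is exactly the one whose inverse lies in $O'$, which is how Lemma~\ref{lem:flipping_stars} was phrased, and that this choice is consistent with the choice at stage $k+1$ (so that the monotone chain genuinely forms); the second uses that $O'$ is a splitting orientation, so that $\sv_1 \in O'$ does lie below some maximal element of $O'$, together with consistency of $O'$ to upgrade "$\vr^\star \geq \sv_k$, $\vs_k \notin O'$, $\vs_k$ nested with $\vr^\star$" into "$\vs_k \leq \vr^\star$". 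Once these bookkeeping points are settled the contradiction with tameness closes the argument and shows the walk from $O$ to $O'$ is finite, so $T(\tau)$ is connected.
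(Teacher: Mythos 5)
Your overall strategy --- iteratively flipping, via Lemma~\ref{lem:flipping_stars}, the unique star element pointing away from $O'$, and using tameness to force termination --- is exactly the paper's. The problem is an orientation reversal that breaks the termination argument. With your conventions ($\vs_k$ maximal in $O_{k-1}$ and $\sv_k\in O'$), the separations $\sv_k$ and $\vs_{k+1}$ are distinct maximal elements of $O_k$, so the star property yields $\sv_k\le\sv_{k+1}$: the increasing chain is $\sv_1\le\sv_2\le\cdots$, lying inside $O'$, and the $\vs_k$ themselves \emph{decrease}. (For the same reason your recurrence should read ${O(\vs_{k+1})=O(\sv_k)}$ rather than ${O(\sv_{k+1})=O(\vs_k)}$, and the edge traversed at step~$k$ is $\{O(\vs_k),O(\sv_k)\}$.) This reversal sinks the key step: you try to upgrade ``$\sv_k\le\vr$ for some maximal $\vr\in O'$'' to ``$\vs_k\le\vr$'', but these two inequalities cannot hold together in a regular tree set --- they give $\rv\le\vs_k$ and $\rv\le\sv_k$, so $\rv$ would be trivial with witness $s_k$ unless $r=s_k$, in which case an orientation of $s_k$ would be small. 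Consequently $\{\vs_k \,|\, k\in\NN\}\cup\{\vr\}$ is not a chain of order type $\omega+1$.

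The repair is what the paper does. The chain that needs an upper bound is $\{\sv_k \,|\, k \in \NN\}\subseteq O'$, and the bound is a maximal element of $O'$ directly, with no upgrade. What does need an argument is that a \emph{single} maximal element of $O'$ sits above all $\sv_k$ simultaneously: splitting only provides, for each $k$ separately, some maximal element above $\sv_k$. The paper fixes the witness in advance as the unique $\vsdash$ maximal in $O'$ with $\svdash\in O$ (Lemma~\ref{lem:flipping_stars} with the roles of the two vertices exchanged), shows $\sv_1\le\vsdash$, and then propagates $\sv_k\le\vsdash$ inductively via Lemma~\ref{lem:infinite_flips}, whose proof uses non-triviality. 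Alternatively, the maximal element of $O'$ above $\sv_1$ is unique (two such would, via the star property, witness the triviality of $\sv_1$), and any maximal element of $O'$ above $\sv_k\ge\sv_1$ is also above $\sv_1$, hence equals it. With the orientations corrected in this way your argument closes.
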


\begin{proof}
    Let~${O, O'\in \vOc}$ be distinct orientations. 
    Let~$\vs_1$ be the unique separation in~$O'$ with~${O = O(\sv_1)}$, and~$\svdash$ the unique separation in~$O$ with~${O' = O(\vsdash)}$. 
    Then~${\vs_1 \leq \vsdash}$, and if~${\vs_1 = \vsdash}$ then~$O$ and~$O'$ are joined by an edge in~$T(\tau)$. 
    Otherwise the assumptions of Lemma~\ref{lem:infinite_flips} are met for~${n = 1}$. 
    Applying Lemma~\ref{lem:infinite_flips} iteratively either yields ${\vs_{n+1} = \vsdash}$ for some~${n \in \mathbb{N}}$, in which case we found a path in~$T(\tau)$ joining~$O$ and~$O'$, 
    or we obtain a strictly increasing sequence~${(\vs_n)_{n\in\mathbb{N}}}$ with~${\vs_n \leq \vsdash}$ for all~${n \in \mathbb{N}}$, that is, a chain of order type~${\omega+1}$.
\end{proof}

\subsection{Regular tame tree sets and trees -- A characterisation}\label{sec:treeschar}

Finally we will prove that the given constructions of the previous subsections agree with each other.

\begin{lemma}
    \label{lem:tau-T-tau-iso}
    Any regular tame tree set~$\tau'$ us isomorphic to~${\tau(T(\tau'))}$.
\end{lemma}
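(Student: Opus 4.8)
The plan is to construct an explicit isomorphism $f \colon \tau' \to \tau(T(\tau'))$ and then invoke Lemma~\ref{lem:Isomorphism}, so that one only needs to verify that $f$ is a bijective homomorphism of separation systems with $\tau'$ nested (which it is, being a tree set) and $\tau(T(\tau'))$ regular (which was noted after its definition). The natural candidate is the map sending $\vs \in \tau'$ to the oriented edge $\big(O(\sv), O(\vs)\big)$ of $T(\tau')$; recall $O(\vs)$ is the unique splitting orientation in which $\vs$ is maximal, so $\{O(\vs), O(\sv)\}$ is by definition an edge of $T(\tau')$, and the two endpoints are distinct by the remark following the definition of $T(\tau')$. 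Thus $f$ is well-defined into $\vE(T(\tau'))$.

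First I would check the easy structural properties. Commuting with the involution is immediate: $f(\sv) = (O(\vs), O(\sv)) = f(\vs)^*$. Surjectivity is also immediate from the definition of $E(T(\tau'))$: every edge of $T(\tau')$ has the form $\{O(\vs), O(\sv)\}$ for some $\vs \in \tau'$, and its two orientations are $f(\vs)$ and $f(\sv)$. Injectivity follows because if $f(\vs) = f(\vr)$ then $O(\vs) = O(\vr)$ with $\vs, \vr$ both maximal in that orientation; but by Corollary~\ref{cor:unique_star} a consistent orientation of a tree set coming from $O(\cdot)$ has $\vs$ as its \emph{unique} relevant maximal element in the sense needed here — more carefully, $O(\vs) = O(\vr)$ forces $\vs = \vr$ since each is the unique separation maximal in that orientation whose flip-image matches, and one uses the remark after Corollary~\ref{cor:unique_star} to pin this down.

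The substantive step is showing $f$ is order-preserving: if $\vr \le \vs$ in $\tau'$ then $f(\vr) \le f(\vs)$ in $\tau(T(\tau'))$, i.e.\ the unique path in $T(\tau')$ between the edges $\{O(\vr),O(\rv)\}$ and $\{O(\vs),O(\sv)\}$ runs from $O(\vr)$ to $O(\sv)$. I would argue via the connectedness machinery already developed: Lemma~\ref{lem:T_connected} produces a path from $O(\rv)$ to $O(\sv)$ by the iterative flipping of Lemma~\ref{lem:infinite_flips}, starting with $\vs_1 \in O(\sv)$ maximal with $O(\sv_1) = O(\rv)$ and with $\vs_1 \le \vsdash$ where $\vsdash$ is the separation in $O(\rv)$ pointing toward $O(\sv)$. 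The key is to identify the first flipped separation with $\vr$ (or to show $\vr$ lies below all the $\vs_k$ in the flip sequence), so that the path leaves $O(\rv)$ through the edge corresponding to $\vr$ and arrives at $O(\sv)$ through the edge corresponding to $\vs$; then in $\tau(T(\tau'))$ this path joins $O(\vr)$ to $O(\sv)$, which is exactly $f(\vr) < f(\vs)$ by the definition of the order on $\vE(T(\tau'))$. I expect this — relating the combinatorial order on $\tau'$ to the path structure of $T(\tau')$, carefully handling the case $\vr = \vs$ and the case where the flip sequence is infinite (which is excluded by tameness, as in Lemma~\ref{lem:T_connected}) — to be the main obstacle; everything else is bookkeeping, after which Lemma~\ref{lem:Isomorphism} finishes the proof.
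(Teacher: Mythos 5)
Your overall plan coincides with the paper's: the same map ${\vs \mapsto (O(\sv),O(\vs))}$, the same reduction via Lemma~\ref{lem:Isomorphism} to checking a bijective homomorphism, and the same idea of re-running the flipping construction of Lemmas~\ref{lem:infinite_flips} and~\ref{lem:T_connected} to get order-preservation. But your description of that key step contains a directional error that, taken at face value, proves the wrong inequality. For ${\vr<\vs}$ the four relevant vertices lie along the tree path in the order ${O(\rv),\,O(\vr),\,\dots,\,O(\sv),\,O(\vs)}$: the edge~$s$ joins the \emph{last} two, so it lies beyond~${O(\sv)}$ as seen from~$r$. Hence the path from~${O(\rv)}$ to~${O(\sv)}$ cannot ``arrive at ${O(\sv)}$ through the edge corresponding to~$\vs$''; it begins with the edge~$r$ but does not contain~$s$ at all. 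If it did end with the edge~$s$, its penultimate vertex would be~${O(\vs)}$, the ${\{r\}}$--${\{s\}}$-path would then join ${O(\vr)}$ to ${O(\vs)}$, and by the definition of the order on ${\vE(T(\tau'))}$ you would have shown ${f(\vr)<f(\sv)}$ rather than ${f(\vr)<f(\vs)}$. So your premise and your conclusion in that sentence are inconsistent, and the premise is the false one.

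What is actually needed, after identifying the first flipped separation with~$\vr$, is that the flip sequence never crosses the edge~$s$. This is true but requires an argument: the terminal separation of the flip sequence is the element of the splitting star of ${O(\sv)}$ other than~$\sv$ whose inverse lies in ${O(\rv)}$; it is ${\le\vs}$ by the star property and distinct from~$\vs$ since ${\vs\notin O(\sv)}$, so every ${\vs_k}$ satisfies ${\vs_k<\vs}$, and ${\vs_k=\sv}$ is excluded by regularity. The paper avoids this bookkeeping entirely by running the flipping argument between the two \emph{inner} endpoints ${O(\vr)}$ and ${O(\sv)}$ and observing that every node of the resulting path contains both~$\vr$ and~$\sv$ by consistency, so neither edge is crossed and the constructed path is exactly the edge-to-edge path. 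Either repair works, but as written your key step would not.
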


\begin{proof}
    Let~${\phi \colon \tau' \to \tau(T(\tau'))}$ be the map defined by~${\phi(\vs) = ( O(\sv), O(\vs) )}$. 
    This is a bijection by Corollary~\ref{cor:unique_star}. 
    Note that for~${\vs \in \tau'}$ the orientations~${O(\sv)}$ and~${O(\vs)}$ differ only in~$s$ by consistency and are thus adjacent in~$T$.
    
    As~$\tau'$ and~${\tau(T(\tau'))}$ are regular tree sets all we need to show is that~$\phi$ is a homomorphism of separation systems. 
    Then~$\phi$ will be an isomorphism of tree sets by Lemma~\ref{lem:Isomorphism}.
    
    It is clear from the definition that~$\phi$ commutes with the involution. 
    Therefore it suffices to show that~$\phi$ is order-preserving.
    
    Let~${\vs, \vsdash \in \tau'}$ be two separations with~${\vs < \vsdash}$. 
    We need to show that the unique ${\{ O(\sv), O(\vs) \}}$--${\{ O(\svdash), O(\vsdash) \}}$-path in~${T(\tau)}$ joins~${O(\vs)}$ and~${O(\svdash)}$. 
    Redoing the proof of Lemma~\ref{lem:T_connected} with~${O = O(\vs)}$ and~${O' = O(\svdash)}$ constructs a $O(\vs)$--$O(\svdash)$-path every one of whose nodes contains~$\vs$ and~$\svdash$ by consistency. 
    Hence~${\phi(\vs) < \phi(\vsdash)}$ as desired.
\end{proof}

\begin{lemma}
    \label{lem:T-tau-T-iso}
    Any graph-theoretic tree~$T'$ is isomorphic to~${T(\tau(T'))}$.
\end{lemma}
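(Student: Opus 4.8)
The plan is to exhibit a map $\psi\colon V(T')\to\vOc$ that will turn out to be a graph isomorphism between $T'$ and $T(\tau(T'))$, where $\vOc$ is the vertex set of $T(\tau(T'))$, i.e.\ the set of splitting orientations of $\tau(T')$. For a vertex $v\in V(T')$ let $\psi(v)$ be the set of all oriented edges $(x,y)\in\vE(T')$ such that the unique $v$--$\{x,y\}$-path in $T'$ ends in $y$ (equivalently, such that $y$ separates $x$ from $v$ in $T'$, with the convention that $(x,y)\in\psi(v)$ when $v=y$). Intuitively $\psi(v)$ is the orientation of all edges of $T'$ ``towards $v$''. First I would check that $\psi(v)$ is a consistent orientation of $\tau(T')$: it meets each edge in exactly one direction, and if $\sv\le\vr$ with $\sv,\vr\in\psi(v)$ then the definition of the order on $\vE(T')$ together with the tree structure forces $s=r$. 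Next I would verify that $\psi(v)$ is splitting, with maximal elements exactly the oriented edges $(x,v)$ at $v$ pointing into $v$; since $v$ has at least one incident edge (assuming $T'$ has an edge; the edgeless cases are trivial), Lemma~\ref{lem:one_closed} applies, or one argues directly that every edge oriented towards $v$ lies below one incident with $v$.

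Having $\psi\colon V(T')\to\vOc$, I would show it is a bijection. Injectivity is immediate: if $u\ne v$, pick an edge $e$ on the $u$--$v$-path in $T'$; it is oriented oppositely in $\psi(u)$ and $\psi(v)$. For surjectivity, take any $O\in\vOc$; by definition it has a maximal element $\vs=(x,y)$, and by Corollary~\ref{cor:unique_star} $O$ is the unique consistent orientation in which $\vs$ is maximal. I claim $O=\psi(y)$: indeed $(x,y)$ is maximal in $\psi(y)$ as well, so by the uniqueness part of the Extension Lemma~\ref{lem:extension}\ref{item:ext-3} the two orientations coincide. Thus $\psi$ is onto.

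Finally I would check that $\psi$ maps $E(T')$ bijectively onto $E(T(\tau(T')))$ in an edge-preserving way, i.e.\ that $\{u,v\}\in E(T')$ if and only if $\{\psi(u),\psi(v)\}\in E(T(\tau(T')))$. For the forward direction: if $e=\{u,v\}\in E(T')$ with oriented versions $\vs=(u,v)$ and $\sv=(v,u)$, then $\vs$ is maximal in $\psi(v)$ and $\sv$ is maximal in $\psi(u)$, so by uniqueness $\psi(v)=O(\vs)$ and $\psi(u)=O(\sv)$, whence $\{\psi(u),\psi(v)\}=\{O(\sv),O(\vs)\}\in E(T(\tau(T')))$. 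Conversely every edge of $T(\tau(T'))$ has the form $\{O(\vs),O(\sv)\}$ for some oriented edge $\vs=(u,v)$ of $T'$, and by the identification just made $O(\vs)=\psi(v)$, $O(\sv)=\psi(u)$, so it is the image of the genuine edge $\{u,v\}$ of $T'$. Since $\psi$ is a bijection on vertices and the edge correspondence is exactly ``same pair of endpoints'', $\psi$ is a graph isomorphism.

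The only real subtlety — and the step I would be most careful about — is pinning down the correspondence between maximal elements of a splitting orientation and incident edges of a vertex, and invoking the uniqueness clause of the Extension Lemma at the right moments; once $\psi(v)$ is known to be the unique consistent orientation with $(x,v)$ maximal for each edge $xv$ at $v$, everything else is bookkeeping. One should also dispatch the degenerate cases where $T'$ has no edges (so $\tau(T')=\emptyset$ and $T(\tau(T'))$ is a single vertex, matching $T'$ only if $T'$ is a single vertex — here one uses that a graph-theoretic tree is connected, so an edgeless tree is indeed a single vertex) separately, as there $\vOc$ would otherwise be empty.
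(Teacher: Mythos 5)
Your proposal is correct and follows essentially the same route as the paper: the map sending a vertex $v$ to the orientation of all edges ``towards $v$'' is exactly the paper's $\phi(v):=O((w,v))$, and the surjectivity and edge-correspondence arguments via maximal elements and Corollary~\ref{cor:unique_star} match the paper's. You simply spell out more of the bookkeeping (consistency, splitting, injectivity, the edgeless case) than the paper's rather terse proof does.
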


\begin{proof}
    If ${|V(T')| = 1}$, then~${\tau(T')}$ is empty and hence~${|V(T(\tau(T')))| = 1}$.
    
    Otherwise, for each node~${v \in V(T')}$ there is at some oriented edge~${(w,v) \in \vec E(T')}$ pointing towards that node. 
    Let~${\phi \colon T' \to T(\tau(T'))}$ be the map defined by~${\phi(v) := {O}((w,v))}$.
    This map is well-defined since the edges directed towards a node~${v \in V(T')}$ form a splitting star with the same maximal elements yielding the unique consistent orientation containing all these oriented edges (cf.~Corollary~\ref{cor:unique_star}).
    
    Similarly, given some~${O = O((w,v)) \in V(T(\tau(T')))}$, we obtain~${\phi(v) = O}$ and hence that~$\phi$ is surjective.
    By construction there is an edge between~${O((v,w))}$ and~${O((w,v))}$ for any edge~${vw \in E(T)}$ and similarly no edge between~${O((v,w))}$ and~$O$ if~${(w,v)}$ is not maximal in~$O$.
\end{proof}

Hence we have proven our main theorem of this section:

\begin{theorem}
    \label{thm:tametreesets}
    \begin{enumerate}
        \item A tree set is isomorphic to the edge tree set of a tree if and only if it is regular and tame.
        \item Any regular and tame tree set~$\tau'$ is isomorphic to~${\tau(T(\tau'))}$.
        \item Any graph-theoretic tree~$T'$ is isomorphic to~${T(\tau(T'))}$.\qed
    \end{enumerate}
\end{theorem}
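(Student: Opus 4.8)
The plan is to observe that Theorem~\ref{thm:tametreesets} is essentially a repackaging of the three lemmas already proved in this section, so the proof should be short and consist mainly of assembling those pieces correctly. Part~(2) is verbatim Lemma~\ref{lem:tau-T-tau-iso}, and part~(3) is verbatim Lemma~\ref{lem:T-tau-T-iso}, so for those two items I would simply cite the corresponding lemma. The only item requiring a genuine (if brief) argument is the characterisation in part~(1).

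For part~(1) I would argue both implications. For the ``only if'' direction, suppose a tree set~$\sigma$ is isomorphic to~$\tau(T)$ for some graph-theoretic tree~$T$. By the remarks in Section~\ref{sec:edge-tree-set}, $\tau(T)$ is a regular tree set and, since every maximal chain in~$\tau(T)$ corresponds to the edge set of a path, ray, or double ray in~$T$, it contains no chain of order type~${\omega+1}$ and hence is tame. Now regularity transfers along the isomorphism directly; and tameness is preserved too, since an isomorphism of separation systems is in particular an order-isomorphism (by Lemma~\ref{lem:Isomorphism} and the definition of isomorphism, the inverse map is also order-preserving), so it maps any chain of order type~${\omega+1}$ to one of the same order type. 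Hence~$\sigma$ is regular and tame. For the ``if'' direction, given a regular tame tree set~$\sigma$, apply Lemma~\ref{lem:tau-T-tau-iso} to conclude that~$\sigma \cong \tau(T(\sigma))$, exhibiting~$\sigma$ as the edge tree set of the tree~$T(\sigma)$.

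I do not anticipate any serious obstacle here: every ingredient is already in place, and the main point is just to be careful that the isomorphism in the ``only if'' direction genuinely preserves the absence of an~${\omega+1}$ chain, which follows because isomorphisms of tree sets are order-isomorphisms in both directions. If one wanted to be extra careful one could note that regularity of~$\tau(T)$ also follows from the explicit description of the order on~$\vE(T)$, since $(x,y) \le (y,x)$ would force the trivial $\{x,y\}$--$\{x,y\}$ path, which is excluded. With those observations the three parts combine to give the theorem, and the proof is essentially a one-paragraph bookkeeping exercise referencing Lemmas~\ref{lem:Isomorphism}, \ref{lem:tau-T-tau-iso}, and~\ref{lem:T-tau-T-iso}.
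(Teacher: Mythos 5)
Your proposal is correct and matches the paper's (implicit) proof exactly: the paper marks this theorem with a \qed because it is precisely the assembly of Lemma~\ref{lem:tau-T-tau-iso}, Lemma~\ref{lem:T-tau-T-iso}, and the observations in Section~\ref{sec:edge-tree-set} that $\tau(T)$ is regular and tame, with these properties transferring along isomorphisms. No difference in approach.
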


Additionally, for distinct but comparable tree sets, we can say precisely in which way the corresponding trees from Theorem~\ref{thm:tametreesets} above are comparable: one will be a minor of the other.

\begin{theorem}
    \label{thm:treeminors}
    Let~$T_1$,~$T_2$ be trees and~$\tau_1$,~$\tau_2$ be regular tame tree-sets.
    \begin{enumerate}
        \item If~${\tau_1 \subseteq \tau_2}$, then~${T(\tau_1)}$ is a minor of~${T(\tau_2)}$.
        \item If~$T_1$ is a minor of~$T_2$, then~${\tau(T_1)}$ is isomorphic to a subset of~${\tau(T_2)}$.
    \end{enumerate}
\end{theorem}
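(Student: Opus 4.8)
The plan is to prove the two statements separately, in each case exhibiting an explicit model of a minor (respectively of the subset embedding) and checking it is valid using the characterisation from Theorem~\ref{thm:tametreesets}.

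For part~(1), suppose $\tau_1 \subseteq \tau_2$. I would like to realise $T(\tau_1)$ as a minor of $T(\tau_2)$ by a branch decomposition: to each node $O_1 \in \vOc(\tau_1)$ I assign the set of those splitting orientations $O_2 \in \vOc(\tau_2)$ whose restriction $O_2 \cap \fwd{\tau_1}{}$ to the sub–separation system $\tau_1$ equals $O_1$. First I would note that, since $\tau_1$ is nested in $\tau_2$ and has no trivial elements (being a tree set), the restriction $O_2 \cap \fwd{\tau_1}{}$ is a consistent orientation of $\tau_1$; and that it is splitting follows from Lemma~\ref{lem:one_closed} once one checks it has a maximal element — which it does, because any maximal element of $O_2$ lying in $\tau_1$ works, and if no maximal element of $O_2$ lies in $\tau_1$ one can run the flipping argument of Lemma~\ref{lem:infinite_flips} inside $\tau_1$, tameness of $\tau_1$ forbidding the non-terminating case. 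Hence the assignment $O_2 \mapsto O_2 \cap \fwd{\tau_1}{}$ is a well-defined surjection $\vOc(\tau_2) \to \vOc(\tau_1)$, partitioning $V(T(\tau_2))$ into the desired branch sets. The main work is then to verify (a) each branch set induces a \emph{connected} subgraph of $T(\tau_2)$, and (b) branch sets corresponding to adjacent nodes of $T(\tau_1)$ are joined by an edge of $T(\tau_2)$. For (a): if $O_2, O_2'$ restrict to the same $O_1$, then along the $O_2$--$O_2'$ path in $T(\tau_2)$ produced by Lemma~\ref{lem:T_connected} every flipped separation $\vs_k$ satisfies $\vs_k \in O_2'$ and $\sv_k \in O_2$; were some $\vs_k \in \tau_1$, then $\sv_k \in O_2 \cap \fwd{\tau_1}{} = O_1 = O_2' \cap \fwd{\tau_1}{} \ni \vs_k$, a contradiction, so the whole path stays inside the branch set. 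For (b): if $O_1, O_1'$ are adjacent in $T(\tau_1)$ via $\vs \in \tau_1$, then $\vs \in \tau_2$ too, and picking $O_2 \in \vOc(\tau_2)$ with $O(\vs) = O_2$ and its flip $O_2^{*}$ gives the required edge with $O_2$ restricting to $O_1$ or $O_1'$ and $O_2^{*}$ to the other. Finally one checks no two distinct branch sets of \emph{non}-adjacent nodes are joined by an edge, which again comes from the fact that an edge of $T(\tau_2)$ is given by some $\vr \in \tau_2$ and if $\vr \notin \tau_1$ both endpoints restrict to the same $O_1$.

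For part~(2), let $T_1$ be a minor of $T_2$ with branch sets $(B_v)_{v \in V(T_1)}$, each $B_v$ inducing a connected (hence subtree) subgraph of $T_2$. I would define an injection $\fwd{\tau(T_1)}{} \to \fwd{\tau(T_2)}{}$ as follows: an oriented edge of $T_1$ is a pair $(u,v)$ with $uv \in E(T_1)$; this edge is witnessed in $T_2$ by a unique edge $e_{uv}$ between $B_u$ and $B_v$, and I orient it from its endpoint in $B_u$ to its endpoint in $B_v$. This map clearly commutes with the involution and is injective since distinct edges of $T_1$ use distinct bridging edges of $T_2$. It remains to check it is order-preserving: if $(x,y) < (v,w)$ in $\tau(T_1)$, i.e.\ the $\{x,y\}$--$\{v,w\}$-path in $T_1$ joins $y$ to $v$, then concatenating the bridging edges and the (connected) branch-set subtrees traversed along this $T_1$-path yields the $\{x',y'\}$--$\{v',w'\}$-path in $T_2$ between the corresponding bridging edges, oriented so that it joins $y'$ to $v'$; hence the images are comparable in $\tau(T_2)$. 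One then invokes Lemma~\ref{lem:Isomorphism} (or rather the definition of "isomorphic to a subset") to conclude the image is an isomorphic copy of $\tau(T_1)$ inside $\tau(T_2)$.

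I expect the main obstacle to be part~(1)(a), i.e.\ proving that each branch set $\{O_2 : O_2 \cap \fwd{\tau_1}{} = O_1\}$ is connected in $T(\tau_2)$, because this is where one must control the flipping sequence from Lemma~\ref{lem:T_connected} and argue that it never leaves the branch set — and in particular handle the possibility that the sequence is infinite, using tameness of $\tau_2$ (not just $\tau_1$) to rule out a chain of order type $\omega+1$. A secondary subtlety is confirming that the restriction of a splitting orientation of $\tau_2$ to $\tau_1$ is again splitting; this is exactly the point where $\tau_1 \subseteq \tau_2$ being a \emph{sub–tree-set} (nested, no trivial elements in its own right) is used, together with Lemma~\ref{lem:one_closed} and the tameness hypothesis on $\tau_1$.
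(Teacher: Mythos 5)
Your proposal is correct in outline, but it takes a genuinely different route from the paper: the paper does not prove Theorem~\ref{thm:treeminors} directly at all, instead declaring it a special case of Theorems~\ref{thm:tlsminor1} and~\ref{thm:tlsminor2}, which are proved in the topological setting by exhibiting ${T(\tau_1)}$ as the contraction ${T(\tau_2).E(T(\tau_1))}$ of a tree-like space and by analysing quotient maps and pseudo-arcs. Your argument stays entirely inside the combinatorial machinery of Section~\ref{sec:tame-tree-sets} (Extension Lemma, the flipping process of Lemma~\ref{lem:T_connected}, uniqueness of splitting stars), which buys a self-contained and more elementary proof that does not presuppose the graph-like space apparatus nor the implicit identification of the two constructions of~$T(\tau)$; the paper's route buys economy, since the general theorems have to be proved anyway. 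Two small points you should tighten. First, surjectivity of the restriction map ${O_2 \mapsto O_2 \cap \vSdash}$ (with ${S' = \tau_1}$) onto the splitting orientations of~$\tau_1$ is asserted but not argued: given a splitting ${O_1}$ of~$\tau_1$ with maximal element~$\vm$, the unique splitting orientation of~$\tau_2$ in which $\vm$ is maximal (Corollary~\ref{cor:unique_star}) restricts to~$O_1$ by the uniqueness part of the Extension Lemma, and this is what makes every branch set non-empty. Second, in showing that the restriction of a splitting $O_2$ is splitting, the tameness you actually need is that of~$\tau_2$, not of~$\tau_1$: if the elements of ${O_2 \cap \vSdash}$ above some $\vr$ had no maximal element, the resulting $\omega$-chain is bounded above by a maximal element of~$O_2$, which need not lie in~$\tau_1$, so the forbidden ${\omega+1}$-chain lives in~$\tau_2$. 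Since both tree sets are assumed tame this is harmless, but the attribution matters for the logic of the step. With these repairs your part~(1) gives a valid minor model, and your part~(2) (unique bridging edge between disjoint subtrees of a tree, plus Lemma~\ref{lem:Isomorphism} applied to the image) is sound.
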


Theorem~\ref{thm:treeminors} is a special case of Theorems~\ref{thm:tlsminor1} and~\ref{thm:tlsminor2} from the next section and hence we will omit its proof here.

\section{Regular tree sets and tree-like spaces}
\label{sec:treelikespaces}

\subsection{Graph-like spaces}
\label{sec:graphlikespaces}

As we have seen in Section~\ref{sec:tame-tree-sets}, not every tree set, even regular, can be represented as the edge tree set of a tree. 
In this section we find a (topological) relaxation of the notion of a (graph-theoretical) tree, 
to be called \emph{tree-like spaces}.
Like trees, these tree-like spaces give rise to a regular edge tree set in a natural way, but which are just general enough that, conversely, every regular tree set can be represented as the edge tree set of a tree-like space.

The concept of graph-like spaces was first introduced in~\cite{OriginalGLS} by Thomassen and Vella, and further studied in~\cite{GLS} by Bowler, Carmesin and Christian.
In~\cite{GLS} the authors discuss the connections between graph-like spaces and graphic matroids, which are of no interest to us here. 
Instead we determine when a graph-like space is tree-like, and then show that every regular tree set can be represented as the edge tree set of a tree-like space.

Graph-like spaces are limit objects of graphs that are not themselves graphs. 
In short they consist of the usual vertices and edges, together with a topology that allows the vertices and edges to be limits of each other. 
The formal definition is as follows.

\begin{definition}
    \label{def:GLS}
    \cite{GLS}
    A \emph{graph-like space}~$G$ is a topological space (also denoted by~$G$) 
    together with a \emph{vertex set}~${V(G)}$, 
    an \emph{edge set}~${E(G)}$ 
    and for each~${e \in E(G)}$ a continuous map~${\iota_e^G:[0,1]\to G}$ 
    (the superscript may be omitted if~$G$ is clear from the context) such that:
    \begin{itemize}
        \item The underlying set of~$G$ is~${V(G) \dot{\cup} [(0,1)\times E(G)]}$.
        \item For any~${x \in (0,1)}$ and~${e \in E(G)}$ we have~${\iota_e(x)=(x,e)}$.
        \item ${\iota_e(0)}$ and~${\iota_e(1)}$ are vertices (called the \emph{end-vertices} of~$e$).
        \item ${\iota_e\upharpoonright_{(0,1)}}$ is an open map.
        \item For any two distinct~${v, v' \in V(G)}$, there are disjoint open subsets~${U, U'}$ of~$G$ partitioning~$V(G)$ and with~${v \in U}$ and~${v' \in U'}$.
    \end{itemize}
    The \emph{inner points} of the edge~$e$ are the elements of~${(0,1) \times \{ e \}}$.
\end{definition}

Note that~$G$ is always Hausdorff. 
For an edge~${e \in E(G)}$ the definition of graph-like space allows~${\iota_e(0)=\iota_e(1)}$. 
We call such an edge a \emph{loop}. 
In our discussions of graph-like spaces loops are irrelevant, so the reader may imagine all graph-like spaces to be loop-free.

If~$U$ and~$U'$ are disjoint open subsets of~$G$ partitioning~$V(G)$ 
we call the set of edges with end-vertices in both~$U$ and~$U'$ a \emph{topological cut} of~$G$ and say that the pair~${(U,U')}$ \textit{induces} that cut. 
The last property of graph-like spaces then says that any two vertices can be separated by a topological cut.
Since the set of inner points of any edge is open, we immediately get the following remark.

\begin{remark}
    \label{rem:finite-top-cuts}
    Any topological cut of a compact graph-like space is finite. \qed
\end{remark}

A graph-like space~$G'$ is a \emph{sub-graph-like space} of a graph-like space~$G$ if 
${V(G') \subseteq V(G)}$, 
${E(G') \subseteq E(G)}$ 
and~$G'$ is a subspace of~$G$ (as topological spaces).
By slight abuse of notation we will write ${G' \subseteq G}$ to say that~$G'$ is a sub-graph-like space of~$G$.

Let~$G$ be a graph-like space and ${F \subseteq E(G)}$ a set of edges of~$G$.
We write ${G-F}$ for the sub-graph-like space ${G \setminus \{ (x,e) \,|\, x \in (0,1), e \in F \}}$ with the same vertex set as $G$, with edge set ${E(G) \setminus F}$ and ${\iota^{G-F}_e = \iota^G_e}$ for all ${e \in E(G) \setminus F}$. 
We abbreviate ${G-\{e\}}$ as~${G-e}$.
Given a set ${W \subseteq V(G)}$ of non-end-vertices we write ${G-W}$ for the sub-graph-like space ${G \setminus W}$ with ${V(G-W) := V(G) \setminus W}$, ${E(G-W) := E(G)}$ and ${\iota^{G-F}_e = \iota^G_e}$ for all~${e \in E(G)}$. 

For reasons of cardinality arc-connectedness is not a very useful notion in graph-like spaces. 
Instead we work with an adapted concept of arcs. 

A graph-like space~$P$ is a \emph{pseudo-arc} 
if~$P$ is a compact connected graph-like space 
with a \emph{start-vertex}~$a$ 
and an \emph{end-vertex}~$b$ 
if for each ${e \in E(P)}$ the vertices~$a$ and~$b$ are separated in~${P-e}$.
If $P$ contains an edge then $a \ne b$; otherwise we call $P$ \emph{trivial}. 
A graph-like space~$G$ is \emph{pseudo-arc-connected} if for all vertices ${a,b \in V(G)}$ there is a pseudo-arc ${P \subseteq G}$ with start-vertex~$a$ and end-vertex~$b$.

Note that in the original definition of pseudo-arc given in \cite{GLS} an extra condition was given, which turns out to be redundant, as seen in the next lemma. 

\begin{lemma}
    \label{lem:pseudo-arc-redundancy}
    Let~$P$ be a pseudo-arc. 
    Then for every ${x, y \in V(P)}$ there is an edge~${e \in E(P)}$ such that~$x$ and $y$ are separated in~${P-e}$. 
\end{lemma}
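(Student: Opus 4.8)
The plan is to reduce the claim, using the last axiom of Definition~\ref{def:GLS} together with the finiteness of topological cuts of compact graph-like spaces (Remark~\ref{rem:finite-top-cuts}), to a statement about separating $x$ and $y$ by deleting a \emph{single} edge, and then to exploit the defining feature of a pseudo-arc: that every single edge separates $a$ from $b$. We may assume $x\neq y$, as otherwise there is nothing to prove, and that $P$ has an edge (so $a\neq b$), as otherwise $V(P)$ is a single vertex. The first thing I would establish is a structural observation: for \emph{every} edge $e\in E(P)$, deleting the inner points of $e$ splits $P$ into exactly two nonempty parts $A_e\ni a$ and $B_e\ni b$ that are clopen in $P-e$, with the two end-vertices of $e$ lying on opposite sides. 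The point is that since $P$ is connected, no clopen partition of $P-e$ can keep both end-vertices of $e$ in the same class --- otherwise adding the inner points of $e$ back to that class would disconnect $P$ --- and from this one deduces that $P-e$ has exactly two quasi-components (a third one would yield a clopen partition keeping both end-vertices together), which in the compact Hausdorff space $P-e$ are clopen and connected.

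Now fix distinct vertices $x,y$. By the last axiom of Definition~\ref{def:GLS} some disjoint open sets partition $V(P)$ with $x$ and $y$ on opposite sides; the induced topological cut is finite by Remark~\ref{rem:finite-top-cuts}, and nonempty, since a cut-free such partition would (by the same argument as above) disconnect $P$. Among all finite edge sets $F$ for which $x$ and $y$ lie in different components of $P-F$, I would pick one of minimum size, call it $G$. If $|G|=1$ then the unique edge of $G$ separates $x$ and $y$ and we are done, so it suffices to rule out $|G|\ge 2$.

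Assume $|G|\ge 2$. For each $e\in G$, minimality gives that $G\setminus\{e\}$ does not separate $x$ from $y$, so $x$ and $y$ lie in a common component $C_e$ of $P-(G\setminus\{e\})$; since $G$ does separate them, the inner points of $e$ must lie in $C_e$ and $C_e-e$ separates $x$ from $y$. At the same time $P-e$ on its own does not separate $x$ from $y$ --- else $\{e\}$ would be a separating set smaller than $G$ --- so by the structural observation $x$ and $y$ lie in a common one of $A_e,B_e$. Intersecting, over all $e\in G$, the component of $P-e$ that contains both $x$ and $y$ yields a closed subset of $P-G$ containing $x$ and $y$, and the plan is to show --- here invoking the pseudo-arc axiom for the edges of $G$ to exclude two \emph{independent} connections between $x$ and $y$ --- that this set lies inside a single component of $P-G$, contradicting that $G$ separates $x$ and $y$.

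The step I expect to be the main obstacle is this final combination. For general compact connected graph-like spaces the minimal-cut argument fails: two vertices joined by two internally disjoint paths form a minimal separating set of size two, none of whose single edges separates them; so the proof must genuinely use that in a pseudo-arc \emph{every} edge separates the fixed vertices $a$ and $b$, which forbids such ``theta-like'' patterns. Carrying this out rigorously is made delicate by the fact that components of the compact spaces $P-F$ need not be open, so one cannot freely pass between ``connected'' and ``not separated by a clopen set'' and must argue throughout with quasi-components, with closures of sets of inner points, and with the two-component structure of each $P-e$ from the first step.
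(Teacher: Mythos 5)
Your proposal sets up exactly the same framework as the paper's proof --- take a minimal finite edge set $G$ separating $x$ from $y$ (which exists by the last axiom of Definition~\ref{def:GLS} and Remark~\ref{rem:finite-top-cuts}), observe that $|G|=1$ finishes the proof, and aim for a contradiction when $|G|\ge 2$ --- but the contradiction itself is never derived, and you say so yourself (``the step I expect to be the main obstacle is this final combination''). That final step is the entire content of the lemma, so as it stands this is a gap, not a proof. Moreover, the route you sketch for it (intersecting, over $e\in G$, the component of $P-e$ containing both $x$ and $y$, and trying to show this intersection lies in a single component of $P-G$) is not promising: intersections of connected sets need not be connected, and nothing in your outline indicates how the pseudo-arc axiom would enter.

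The missing step is short, and you already have every ingredient for it. Let $C_x$ and $C_y$ be the components of $P-G$ containing $x$ and $y$. From your minimality observation --- $x$ and $y$ lie in a common component $C_e$ of $P-(G\setminus\{e\})$ containing $\mathring{e}$, while $G$ does separate them --- it follows that \emph{every} $e\in G$ has one end-vertex in $C_x$ and the other in $C_y$. Now take distinct $f_1,f_2\in G$. The set $C_x\cup\mathring{f_2}\cup C_y$ is connected, is contained in $P-f_1$, and contains both end-vertices of $f_1$; hence both end-vertices of $f_1$ lie in the same component of $P-f_1$. This contradicts your own structural observation (equivalently, the pseudo-arc axiom that $a$ and $b$ are separated in $P-f_1$, together with the connectedness of $P$): the end-vertices of $f_1$ must lie on opposite sides of $P-f_1$. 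This is precisely how the paper concludes; note also that your worry about ``theta-like'' configurations is resolved exactly here, since in such a configuration the two end-vertices of an edge of the minimal cut would indeed lie in one component of $P-f_1$, which a pseudo-arc forbids.
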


\begin{proof}
    Let~${x, y \in V(P)}$.
    Let~$F$ be a minimal topological cut separating~$x$ and~$y$, which exists by Remark~
    \ref{rem:finite-top-cuts}. 
    If~${|F| = 1}$ we are done. 
    Assume for a contradiction that~${|F| > 1}$ and let~${f_1, f_2 \in F}$ be distinct.
    Let~$C_x$ and~$C_y$ denote the components of~${P - F}$ containing~$x$ and~$y$ respectively. 
    Now~$x$ and~$y$ are in the same component of~${P - (F - e)}$ for every~${e \in F}$ by the minimality of~$F$. 
    Hence the end-vertices of any~${e \in F}$ meet both~$C_x$ and~$C_y$. 
    But then the end-vertices of $f_1$ are in the same component~$C$ of ${P - f_1}$, since $C$ contains~$C_x$,~$C_y$ and~$f_2$. 
    But this contradicts that the start- and end-vertex of~$P$ are separated in~${P - f_1}$. 
\end{proof}

The adapted notion of circles is analogous. 
A graph-like space is a \emph{pseudo-circle} if it is a compact connected graph-like space with at least one edge satisfying the following:
\begin{itemize}
    \item removing any edge from $C$ does not disconnect $C$ but removing any pair does;
    \item any two vertices of $C$ can be separated in $C$ by removing a pair of edges.
\end{itemize}

Pseudo-arcs and pseudo-circles are related as follows:

\begin{lemma}
    \label{lem:pseudo-circle}
    \cite{GLS}
    Let $G$ be a graph-like space, $C$ a pseudo-circle in~$G$ and ${e \in E(C)}$. 
    Then ${C-e}$ is a pseudo-arc in~$G$ joining the end-vertices of~$e$.

    Conversely, let $P$ and $Q$ be nontrivial non-loop pseudo-arcs in~$G$ that meet precisely in their end-vertices. 
    Then ${P \cup Q}$ is a pseudo-circle in~$G$.
\end{lemma}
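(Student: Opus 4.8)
The plan is to prove both directions of Lemma~\ref{lem:pseudo-circle} by working with the defining separation properties of pseudo-arcs and pseudo-circles and the finiteness of topological cuts in compact graph-like spaces (Remark~\ref{rem:finite-top-cuts}).

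For the first direction, suppose $C$ is a pseudo-circle and $e \in E(C)$. By the definition of a pseudo-circle, removing the single edge $e$ does not disconnect $C$, so $C-e$ is connected; it is clearly still a compact graph-like space. Let $a, b$ be the end-vertices of $e$. I need to show that $a \ne b$ (so the pseudo-arc is nontrivial) and that for every $f \in E(C-e) = E(C) \setminus \{e\}$ the vertices $a$ and $b$ are separated in $(C-e)-f = C - \{e,f\}$. The latter follows directly: by the pseudo-circle axioms, removing the pair $\{e,f\}$ disconnects $C$, so $C - \{e,f\}$ has at least two components; I must check that $a$ and $b$ end up in different ones. If they were in the same component, then in $C - f$ alone (adding $e$ back, whose end-vertices $a,b$ would then lie in one component) $C-f$ would still be connected — which it is by the first pseudo-circle axiom — but more to the point I should use the second pseudo-circle axiom: the pair of edges separating the two end-vertices of $e$, call them $a$ and $b$, is forced; since $\{e,f\}$ is a disconnecting pair and removing $e$ alone keeps things connected, the cut $\{e,f\}$ must genuinely split $a$ from $b$ (otherwise $C-f$ would be disconnected, contradicting the first axiom). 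Finally $a \ne b$ because $e$ is not a loop — if $C$ had only loops it could not satisfy the separation axiom — and the pseudo-arc $C-e$ joins $a$ and $b$.

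For the converse, let $P$ and $Q$ be nontrivial non-loop pseudo-arcs meeting precisely in their common end-vertices; write $a, b$ for these two end-vertices, so $P \cap Q = \{a,b\}$ and $a \ne b$. Put $C := P \cup Q$, which is a graph-like space with $V(C) = V(P) \cup V(Q)$, $E(C) = E(P) \dot\cup E(Q)$; it is compact as a union of two compact subspaces and connected since $P$ and $Q$ share the vertices $a,b$. It has at least one edge. Now I verify the two pseudo-circle axioms. First, removing a single edge $f \in E(P)$ does not disconnect $C$: by Lemma~\ref{lem:pseudo-arc-redundancy} applied within $P$, or rather by the fact that $P-f$ separates $a$ from $b$ into exactly two components (the ``$a$-side'' and the ``$b$-side''), but $Q$ provides an $a$--$b$ connection disjoint from $E(P)$, so $(P-f) \cup Q$ is connected; symmetrically for $f \in E(Q)$. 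Second, removing a pair $\{f,g\}$ does disconnect $C$: one treats the cases $f,g \in E(P)$, $f,g \in E(Q)$, and $f \in E(P), g \in E(Q)$ separately. In the mixed case, $P - f$ splits into an $a$-part $A_P$ and a $b$-part $B_P$, and $Q-g$ splits into an $a$-part $A_Q$ and a $b$-part $B_Q$; then $A_P \cup A_Q$ and $B_P \cup B_Q$ partition $C - \{f,g\}$ into nonempty clopen pieces, so it is disconnected. In the same-side case, say $f,g \in E(P)$: take a minimal topological cut of $P$ separating the end-vertices of $f$ from each other along $P$; using the pseudo-arc structure one shows the component structure of $P - \{f,g\}$, and since $Q$ only reconnects at $a$ and $b$, removing $\{f,g\}$ still leaves a piece of $P$ cut off from the rest. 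For the vertex-separation axiom, given $x, y \in V(C)$, I use Lemma~\ref{lem:pseudo-arc-redundancy} to find a single edge in $P$ and a single edge in $Q$ whose removal separates the relevant boundary vertices, and combine them into a separating pair of edges of $C$; the bookkeeping depends on whether $x,y$ both lie on $P$, both on $Q$, or one on each.

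The main obstacle I expect is the careful case analysis for the second pseudo-circle axiom in the converse direction — in particular the same-side case $f, g \in E(P)$, where one must argue that removing two edges from the middle of one pseudo-arc genuinely disconnects the union even though $Q$ is attached at the two ends. The cleanest way to handle this is to first establish a small structural fact: for a nontrivial non-loop pseudo-arc $P$ with end-vertices $a,b$ and any edge $f \in E(P)$, the space $P - f$ has exactly two components, one containing $a$ and one containing $b$ (this follows from minimality of topological cuts and the definition, essentially repeating the idea in the proof of Lemma~\ref{lem:pseudo-arc-redundancy}); iterating this for a second edge $g$ then pins down the components of $P - \{f,g\}$, and the rest is routine gluing. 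The vertex-separation axiom and the remaining cases are then straightforward combinatorics of clopen partitions.
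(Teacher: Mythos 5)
The paper does not actually prove this lemma --- it is imported from Bowler--Carmesin--Christian~\cite{GLS} as a black box --- so there is no in-paper argument to compare yours against; what you have written is a self-contained verification from the definitions, and in substance it is correct. Your first direction is sound: $C-e$ is compact and connected by the first pseudo-circle axiom, and the key point --- that the end-vertices $a,b$ of $e$ land in \emph{different} components of $C-\{e,f\}$ --- follows exactly as you finally say, because otherwise re-inserting $e$ (whose closure attaches only at $a$ and $b$) could not reconnect $C-f$, contradicting the first axiom. (The middle of that paragraph is garbled --- you momentarily assert $C-f$ ``would still be connected'' when the contradiction you want is that it would be \emph{dis}connected --- but the parenthetical at the end states the correct argument.) For the converse, your structural fact is indeed the right engine and is true: in a compact graph-like space components coincide with quasi-components, every nonempty clopen subset of $P-f$ must contain an end-vertex of $f$ (else it would be clopen in $P$), so $P-f$ has exactly two components, one containing $a$ and one containing $b$. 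The one place your sketch is thinnest is the same-side case $f,g\in E(P)$ of the second axiom: you need the ``middle piece'' of $P-\{f,g\}$ --- a clopen set meeting neither $a$ nor $b$ --- to be \emph{nonempty}, and this is not automatic from ``pinning down the component structure.'' It does hold, because the closure of the inner points of $f$ is connected and contains both end-vertices of $f$, so both lie in the same component of $P-g$; since the two end-vertices of $f$ lie in different components of $P-f$, one of them ends up in a component of $P-\{f,g\}$ avoiding both $a$ and $b$, and $Q$ cannot rescue it since it attaches only at $a$ and $b$. With that point made explicit, and with the choice of $f$ separating $x$ from $a$ and $g$ separating $y$ from $b$ in the mixed-vertex case of the separation axiom, your argument goes through.
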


Given two graph-like spaces $G_1$, $G_2$, a map ${\phi: G_1 \to G_2}$ is an \emph{isomorphim of graph-like spaces} if it is a homeomorphism (for the topological spaces) and it induces a bijection between~$V(G_1)$ and~$V(G_2)$.

Let~$G$ be a graph-like space and ${F \subseteq E(G)}$ a set of edges of~$G$.
We define a relation~$\sim'_F$ on~$G$ via 
\[
    \iota_e(x) \sim_F \iota_e(y) \text{ for all } e \in F \text{ and } x,y \in [0,1].
\]
Let~$\sim_F$ denote the minimal equivalence relation that extends the transitive and reflexive closure of~$\sim'_F$ such that the resulting quotient space ${G/F := G/\sim_F}$ is Hausdorff.

\begin{remark}
    The \emph{contraction} ${G/F}$ of~$F$ in~$G$ is a graph-like space with 
    vertex set ${V(G/F) := \{ [v] \in G/\sim_F \,|\, v\in V(G) \}}$, 
    edge set\footnote{This is a slight abuse of notation since technically the inner points of an edge~$e$ in the quotient space are of the form $\{(x,e)\}$ and not $(x,e)$.} 
    ${E(G/F) := E(G) \setminus F}$ 
    and for each edge ${e \in E(G)\setminus F}$ the map ${\iota_e^{G/F} := \iota_e^G}$.
\end{remark}

One can also easily show that each equivalence class with respect to~$\sim_F$ is connected in~$G$.
Moreover, we write~${G.F}$ for~${G/(E(G) \setminus F)}$ for the \emph{contraction} to~$F$ in~$G$.

We say that a graph-like space~$G'$ is a \emph{minor} of graph-like space~$G$ if there are disjoint edge sets~${F_1, F_2 \subseteq E(G)}$ and a set~${W \subseteq V(G/F_1)-F_2)}$ of non-end-vertices such that~$G'$ is isomorphic to ${((G/F_1)-F_2)-W}$.

We will also need the following fact about graph-like spaces:

\begin{theorem}
    \label{thm:GLS_connected}
    A compact graph-like space is connected if and only if it is pseudo-arc connected.
\end{theorem}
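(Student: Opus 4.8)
The plan is to prove the two implications separately, with the forward direction (compact connected $\Rightarrow$ pseudo-arc-connected) carrying all the weight.

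For the reverse implication I would argue directly, without using compactness. Assuming $G$ is pseudo-arc-connected and nonempty, so $V(G)\neq\emptyset$ since an edge always has end-vertices, I would fix a vertex $v_0$ and, for each $v\in V(G)$, a pseudo-arc $P_v\sub G$ from $v_0$ to $v$; each is connected and contains $v_0$, so their union $A$ is connected. Each edge image $\iota_e([0,1])$ is connected and meets $A$ in an end-vertex, and since every point of $G$ is a vertex or an inner point of an edge, $G$ is the union of $A$ with these edge images --- a union of connected sets all meeting $A$ --- hence connected.

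For the forward implication, assume $G$ compact and connected; it suffices to join any two vertices $a,b$ by a pseudo-arc (if $a=b$ the trivial pseudo-arc on $\{a\}$ works, so assume $a\neq b$). The key idea is a minimality argument: consider the poset $\mathcal{M}$ of all closed connected sub-graph-like spaces of $G$ containing both $a$ and $b$, ordered by inclusion. It contains $G$, and I would check that it is closed under intersections of chains: for a chain $(M_i)$, the intersection $M$, with vertex set $\bigcap_i V(M_i)$ and edge set $\bigcap_i E(M_i)$, is closed, contains $a$ and $b$, is connected because a nested intersection of closed connected sets in a compact Hausdorff space is connected, and is readily verified to be a sub-graph-like space (a point of $M$ is a vertex lying in all the $V(M_i)$ or an inner point of an edge lying in all the $E(M_i)$; the end-vertices of edges of $M$ stay in $V(M)$; the remaining axioms of Definition~\ref{def:GLS} pass to subspaces). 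Zorn's Lemma then yields a minimal $M\in\mathcal{M}$. I would then show $M$ is a pseudo-arc from $a$ to $b$: it is compact (closed in $G$) and connected; if it had no edges, connectedness together with the vertex-separation axiom would force $a=b$, so it has an edge; and for each $e\in E(M)$, the space $M-e$, obtained by deleting the open set of inner points of $e$, is again a closed sub-graph-like space of $G$ containing $a$ and $b$ and properly contained in $M$, so minimality makes $M-e$ disconnected. Since each component of $M-e$ is closed in $G$ and, because every remaining edge image lies wholly in a single component, is itself a sub-graph-like space, $a$ and $b$ cannot lie in a common component of $M-e$ (this would contradict minimality); hence $a$ and $b$ are separated in $M-e$, which is exactly the pseudo-arc condition.

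I expect the main obstacle to be the verification that $\mathcal{M}$ is closed under chain intersections. The topological input --- that a nested intersection of continua is connected --- is precisely where compactness of $G$ enters, and some bookkeeping is needed to confirm that the intersection, as well as $M-e$ and each of its components, really inherits a sub-graph-like structure rather than merely a subspace topology.
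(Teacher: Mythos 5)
Your proposal is correct and follows essentially the same route as the paper: the backward direction is the easy union-of-connected-sets argument, and the forward direction applies Zorn's Lemma to the poset of closed connected sub-graph-like spaces containing $a$ and $b$, using the standard fact that a nested intersection of closed connected subsets of a compact Hausdorff space is connected, and then showing the minimal element is a pseudo-arc since otherwise the component of $M-e$ containing both $a$ and $b$ would contradict minimality. The extra bookkeeping you flag (that chain intersections, $M-e$, and its components inherit a sub-graph-like structure) is exactly what the paper leaves as ``easy to verify,'' and your sketches of these verifications are sound.
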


\begin{proof}
    The backward implication is clear as pseudo-arcs are connected. 
    
    For the forward implication let~${a, b \in V(G)}$ be given. 
    Consider the poset~$\mathfrak{C}$ of all closed and connected sub-graph-like spaces of~$G$ that contain both~$x$ and~$y$, ordered by inclusion. 
    Let~$\mathcal{C}$ be a decreasing chain in~$\mathfrak{C}$. 
    It is easy to verify that ${\bigcap \mathcal{C}}$ is a lower bound of~$\mathcal{C}$, where the connectedness of~${\bigcap \mathcal{C}}$ follows from a standard topology lemma that the intersection of a decreasing chain of non-empty closed connected subsets of a compact Hausdorff space are connected. 
    Hence~$\mathfrak{C}$ has a minimal element~$P$ by Zorn's Lemma. 
    Now $P$ is a pseudo-arc with start-vertex~$a$ and end-vertex~$b$, since if $P - e$ would not separate~$a$ and~$b$ then the component of~${P - e}$ containing both~$a$ and~$b$ defines a smaller compact connected sub-graph-like space than~$P$, a contradiction. 
\end{proof}

\subsection{Tree-like spaces}\label{sec:TLSproof}

There are many different equivalent ways of defining the graph-theoretical trees, which is an easy exercise to prove.

\begin{proposition}
    \label{prop:graph-trees}
    For a graph $T=(V,E) $ the following are equivalent.
    \begin{enumerate}[label=(\roman*)]
        \item For any two vertices ${a,b\in V(T)}$ there is a unique path in~$T$ from~$a$ to~$b$;
        \item $T$ is connected but ${T-e}$ is not for any edge~${e\in E(T)}$;
        \item $T$ is connected and contains no cycle.
        \item $T$ contains no cycle but every graph $T'$ with ${V(T')=V(T)}$ and $T'-F=T$ for some non-empty $F\subseteq E(T')\smallsetminus E(T)$ does.
    \end{enumerate}
\end{proposition}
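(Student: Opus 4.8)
The plan is to prove Proposition~\ref{prop:graph-trees} by a cycle of implications, say (i)$\Rightarrow$(iii)$\Rightarrow$(ii)$\Rightarrow$(i) together with (iii)$\Leftrightarrow$(iv), all by entirely standard graph-theoretic arguments. None of the individual steps is hard; the only point requiring a little care is that the graph $T$ may be infinite, so I must avoid any argument that implicitly counts vertices or edges, or that invokes finiteness of a spanning tree.

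For (i)$\Rightarrow$(iii): uniqueness of paths forces connectedness immediately, and if $T$ contained a cycle then any two distinct vertices on that cycle would be joined by two internally disjoint paths, contradicting uniqueness. For (iii)$\Rightarrow$(ii): $T$ is connected by hypothesis; if $T-e$ were connected for some edge $e=xy$, then $T-e$ would contain an $x$--$y$ path, which closes up with $e$ to a cycle in $T$, contradiction. For (ii)$\Rightarrow$(i): connectedness gives at least one $a$--$b$ path for any $a,b$; if there were two distinct $a$--$b$ paths, their symmetric difference contains a cycle, and deleting any edge of that cycle leaves the graph connected (one can still route around the cycle), contradicting the minimality in (ii). I would phrase the ``symmetric difference contains a cycle'' step carefully, since for infinite graphs one wants to extract a finite cycle — but two distinct finite paths between the same endpoints already live in a finite subgraph, so this is unproblematic.

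For the equivalence with (iv): assuming (iii), let $T'$ be any graph with $V(T')=V(T)$ and $T'-F=T$ for some nonempty $F\subseteq E(T')\setminus E(T)$. Pick $e=xy\in F$; since $T$ is connected there is an $x$--$y$ path in $T\subseteq T'-e\subseteq T'$, which together with $e$ gives a cycle in $T'$, so (iv) holds. Conversely, assuming (iv), $T$ is acyclic by hypothesis; if $T$ were disconnected, pick vertices $a,b$ in different components and let $T'=T+ab$ (so $F=\{ab\}$): then $T'-F=T$, yet $T'$ is still acyclic, since a cycle through the new edge $ab$ would yield an $a$--$b$ path in $T$. This contradicts (iv), so $T$ is connected, giving (iii).

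The main obstacle, such as it is, is purely expository: making sure each implication is stated so that it is manifestly valid for infinite $T$, in particular not silently using ``a connected graph has a spanning tree'' or any compactness. I expect to organize the write-up as a single short chain of implications with the (iii)$\leftrightarrow$(iv) equivalence tacked on, and to keep each step to one or two sentences, since this is the kind of ``easy exercise'' the text already flags it to be.
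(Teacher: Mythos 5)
Your proof is correct and uses essentially the same standard arguments as the paper's own (commented-out) proof, which runs the single cycle (i)$\Rightarrow$(iv)$\Rightarrow$(iii)$\Rightarrow$(ii)$\Rightarrow$(i); your rearrangement into (i)$\Rightarrow$(iii)$\Rightarrow$(ii)$\Rightarrow$(i) together with (iii)$\Leftrightarrow$(iv), and your use of the symmetric-difference cycle in (ii)$\Rightarrow$(i) in place of the paper's ``some edge lies on exactly one of the two paths'' argument, are only cosmetic differences. All steps remain valid for infinite $T$, as you correctly note.
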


A graph~$T$ is a tree if it has one (and thus all) of the above properties. 
In some situations one of these properties is easier to work with than the others, and their equivalence is used implicitly in many places in graph theory.

The above properties can be translated into the setting of graph-like spaces to say when a graph-like space is tree-like as follows:

\begin{definition}
    \label{def:TLS}
    A compact loop-free graph-like space~$G$ is a \emph{tree-like space} if one of the following conditions holds:
    \begin{enumerate}[label=\textnormal{(\roman*)}]
        \item\label{item:TLS-unique} For any two vertices ${a,b\in V(G)}$ there is a unique pseudo-arc in~$T$ from~$a$ to~$b$;
        \item\label{item:TLS-min-con} $G$ is connected but ${G-e}$ is not for any edge~${e\in E(G)}$;
        \item\label{item:TLS-con-acyclic} $G$ is connected and contains no pseudo-circle;
        \item\label{item:TLS-max-acyclic} $G$ contains no pseudo-circle but every graph-like space~$G'$ 
            with~${V(G') = V(G)}$ and~${G' - F = G}$ for some non-empty~${F \subseteq E(G') \setminus E(G)}$ does.
    \end{enumerate}
\end{definition}

Analogous to Proposition~\ref{prop:graph-trees}, we prove the following proposition.

\begin{proposition}
    \label{prop:TLS_equivalent}
    For compact loop-free graph-like spaces the conditions in Definition~\ref{def:TLS} are equivalent.
\end{proposition}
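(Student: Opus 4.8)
The plan is to establish the cycle of implications
\[
    \ref{item:TLS-unique}\Rightarrow\ref{item:TLS-max-acyclic}\Rightarrow\ref{item:TLS-con-acyclic}\Rightarrow\ref{item:TLS-min-con}\Rightarrow\ref{item:TLS-unique},
\]
following the same scheme as the (elementary) proof of Proposition~\ref{prop:graph-trees}, with pseudo-arcs and pseudo-circles in place of paths and cycles. Three preliminary observations will be used throughout. \emph{(a)}~A single non-loop edge together with its two end-vertices is a nontrivial non-loop pseudo-arc. \emph{(b)}~Since $G$ is compact Hausdorff, every pseudo-arc and every pseudo-circle in $G$ is closed in $G$; consequently a nontrivial pseudo-arc $P\subseteq G$ is determined by its edge set, as its vertex set is forced to consist of exactly the vertices lying in the closure of the inner points of the edges of $P$. \emph{(c)}~If $H$ is a connected compact Hausdorff graph-like space and $f\in E(H)$, then $H-f$ is closed (hence compact) in $H$ and every point of $H-f$ lies in the same component of $H-f$ as one of the two end-vertices of $f$; in particular $H-f$ is connected if and only if those two end-vertices are joined in $H-f$.

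For \ref{item:TLS-unique}\,$\Rightarrow$\,\ref{item:TLS-max-acyclic}: a pseudo-circle $C\subseteq G$ has at least two edges since $G$ is loop-free, so for $e\in E(C)$ both $e$ and $C-e$ are pseudo-arcs joining the end-vertices of $e$ (by~\emph{(a)} and Lemma~\ref{lem:pseudo-circle}), and they are distinct by~\emph{(b)} --- contradicting \ref{item:TLS-unique}; hence $G$ contains no pseudo-circle. For the second clause, let $G'$ be a graph-like space with $V(G')=V(G)$ and $G'-F=G$ for some non-empty $F\subseteq E(G')\setminus E(G)$, and pick $e'\in F$: if $e'$ is a loop it is already a pseudo-circle, and otherwise its end-vertices $a,b$ are distinct, the unique $a$--$b$ pseudo-arc $P$ of $G$ meets $e'$ exactly in $\{a,b\}$ (because $G$ omits the inner points of $e'$), so $P\cup e'$ is a pseudo-circle in $G'$ by the converse part of Lemma~\ref{lem:pseudo-circle}. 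For \ref{item:TLS-max-acyclic}\,$\Rightarrow$\,\ref{item:TLS-con-acyclic} I argue by contraposition: if $G$ has a pseudo-circle the first clause of \ref{item:TLS-max-acyclic} fails; and if $G$ is pseudo-circle-free but disconnected, pick $a,b$ in distinct components and put $G':=G+e'$ for a fresh edge $e'$ joining $a$ and $b$ (again a graph-like space with $G'-\{e'\}=G$) --- any pseudo-circle of $G'$ avoiding $e'$ would lie in $G$, and any pseudo-circle through $e'$ would by Lemma~\ref{lem:pseudo-circle} contain a pseudo-arc, hence a connected subspace, joining $a$ and $b$ in $G$, so $G'$ is pseudo-circle-free and the second clause of \ref{item:TLS-max-acyclic} fails.

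For \ref{item:TLS-con-acyclic}\,$\Rightarrow$\,\ref{item:TLS-min-con}: $G$ is connected by hypothesis, and if $G-e$ were connected for some edge $e$ with (necessarily distinct) end-vertices $u,v$, then $G-e$ is compact and connected, hence pseudo-arc-connected by Theorem~\ref{thm:GLS_connected}, yielding a pseudo-arc $P\subseteq G-e$ from $u$ to $v$; then $P\cup e$ is a pseudo-circle in $G$ by~\emph{(a)} and Lemma~\ref{lem:pseudo-circle}, contradicting \ref{item:TLS-con-acyclic}. For \ref{item:TLS-min-con}\,$\Rightarrow$\,\ref{item:TLS-unique}: a pseudo-arc between any two vertices exists by Theorem~\ref{thm:GLS_connected}, and for uniqueness suppose $P\neq Q$ are pseudo-arcs from $a$ to $b$ (necessarily nontrivial, as $a\neq b$); by~\emph{(b)} their edge sets differ, so we may choose $e\in E(P)\setminus E(Q)$ with end-vertices $u,v$. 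Since $a$ and $b$ are separated in $P-e$, observation~\emph{(c)} applied to $P$ puts one of $u,v$ in the component of $a$ in $P-e$ and the other in the component of $b$; as $Q\subseteq G-e$ joins $a$ and $b$, this places $u$ and $v$ in a single component of $G-e$, so $G-e$ is connected by~\emph{(c)} --- contradicting \ref{item:TLS-min-con}.

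The main obstacle is observation~\emph{(c)} together with the failure of local connectivity: unlike in finite graphs, one may not simply assume that deleting an edge leaves at most two components, one per end-vertex. I would derive~\emph{(c)} as follows: $\iota_f((0,1))$ is open in $H$ while $\iota_f([0,1])$ is compact and hence closed, so $H-f$ is closed and compact; in the compact Hausdorff space $H-f$ components coincide with quasi-components, so if some $x\in H-f$ lay in the component of neither end-vertex of $f$ there would be a set $A$ clopen in $H-f$ with $x\in A$ and both end-vertices outside $A$; but then $A$ avoids $\iota_f([0,1])$ entirely, whence $A$ is the intersection of an open subset of $H$ with the open set $H\setminus\iota_f([0,1])$ and is therefore also open in $H$, and being closed in $H-f$ it is closed in $H$ too, so $A$ is a proper nonempty clopen subset of the connected space $H$, a contradiction. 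The only other points requiring real care are the verification that the one-edge extension $G+e'$ is a graph-like space (it is the quotient of $G\sqcup[0,1]$ gluing the interval's endpoints to $a$ and $b$: compact, Hausdorff, and inheriting the vertex-separation axiom from $G$) and the elementary observation~\emph{(b)}; all the remaining reasoning is a routine translation of the finite argument, with Theorem~\ref{thm:GLS_connected} providing existence of pseudo-arcs in connected spaces and Lemma~\ref{lem:pseudo-circle} the conversions between pseudo-arcs and pseudo-circles.
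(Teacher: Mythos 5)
Your proof is correct and follows the same cycle of implications as the paper, \ref{item:TLS-unique}$\Rightarrow$\ref{item:TLS-max-acyclic}$\Rightarrow$\ref{item:TLS-con-acyclic}$\Rightarrow$\ref{item:TLS-min-con}$\Rightarrow$\ref{item:TLS-unique}, with the same pivotal ingredients: Theorem~\ref{thm:GLS_connected} to produce pseudo-arcs in compact connected spaces and Lemma~\ref{lem:pseudo-circle} to convert between pseudo-arcs and pseudo-circles. The one place where you genuinely diverge is the uniqueness step of \ref{item:TLS-min-con}$\Rightarrow$\ref{item:TLS-unique}: the paper, after finding an edge $e$ lying on exactly one of the two pseudo-arcs, simply cites Lemma~4.16 of~\cite{GLS} for the fact that $G-e$ is still pseudo-arc-connected, whereas your observation~\emph{(c)} --- components coincide with quasi-components in the compact space $G-e$, so every point lies in the component of one of the two end-vertices of $e$ --- yields a correct, self-contained proof that $G-e$ is connected; that is a real gain in self-containedness. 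Two caveats. First, your observation~\emph{(b)} does not follow from compactness alone: closedness only gives that a pseudo-arc \emph{contains} the closure of the inner points of its edges, while the reverse inclusion (no stray vertices) is precisely Lemma~\ref{lem:parc_edges}, which the paper imports from~\cite{GLS} exactly because it is not elementary; since that lemma is available you should cite it rather than derive it, so this is a mis-attribution rather than a gap. Second, when you ``choose $e\in E(P)\setminus E(Q)$'' you should note why this set is non-empty: if $E(P)\subseteq E(Q)$ then (by Lemma~\ref{lem:parc_edges}) $P\subseteq Q-f$ for any $f\in E(Q)\setminus E(P)$, contradicting that removing $f$ separates $a$ from $b$ in $Q$ while $P$ is a connected set containing both --- this is the short remark the paper makes immediately after Lemma~\ref{lem:parc_edges}.
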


The argument is very similar to the proof of Proposition~\ref{prop:graph-trees}, but one additional technical lemma is needed: 
if two vertices~$a$ and~$b$ of a graph~$G$ are joined by two different paths it is obvious that some edge~${e\in E(G)}$ lies on exactly one of the two paths.
However for graph-like spaces and pseudo-arcs this intuitive fact requires a surprising amount of set-up to prove (see \cite{GLS}).

We forego this technical set-up and simply use the following lemma:

\begin{lemma}
    \label{lem:parc_edges}
    \cite{GLS}*{Remark~4.4}
    Any nontrivial pseudo-arc in a graph-like space is the closure of the inner points of its edges.
\end{lemma}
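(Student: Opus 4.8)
The plan is to restate the conclusion as a density statement and argue by contradiction. Since the underlying set of $P$ is $V(P)\,\dot{\cup}\,[(0,1)\times E(P)]$ and the inner points are exactly $(0,1)\times E(P)=P\setminus V(P)$, the assertion $P=\overline{(0,1)\times E(P)}$ is equivalent to saying that the inner points are dense, i.e.\ that $V(P)$ has empty interior in $P$. So I would suppose, for contradiction, that there is a nonempty open set $U\subseteq P$ consisting only of vertices.

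First I would show that such a $U$ cannot touch a closed edge at all. For every $e\in E(P)$ the map $\iota_e\colon[0,1]\to P$ is continuous, so $\iota_e^{-1}(U)$ is open in $[0,1]$; since $U$ contains no inner point and $\iota_e((0,1))=(0,1)\times\{e\}$ consists of inner points, we get $\iota_e^{-1}(U)\subseteq\{0,1\}$, and the only open subset of $[0,1]$ contained in $\{0,1\}$ is empty. Hence $U$ is disjoint from $\iota_e([0,1])$ for every edge $e$; in particular $U$ contains no end-vertex, and since $U$ is itself an open neighbourhood of each of its points meeting no inner point, we also have $U\cap\overline{(0,1)\times E(P)}=\emptyset$. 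Thus every vertex in $U$ is a \emph{free} vertex that is not even a limit of inner points.

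The heart of the argument is to rule this out using that $P$ is a nontrivial pseudo-arc. Here I would invoke the order structure of a pseudo-arc: a nontrivial pseudo-arc from $a$ to $b$ carries a natural linear order with least element $a$ and greatest element $b$ in which the two end-vertices of each edge are consecutive and the inner points of that edge fill the resulting jump, and --- crucially --- whose order topology coincides with the given topology on $P$. Granting this, a point $u\in U$ has an order-interval neighbourhood contained in the open set $U$, and since $U$ contains no inner point this interval contains no edge. But by Lemma~\ref{lem:pseudo-arc-redundancy} any two distinct vertices of $P$ are separated in $P-e$ by a single edge $e$, and such a separating edge is exactly a jump lying between them; hence the interval around $u$ can contain no second vertex. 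Being free of inner points as well, this interval is the singleton $\{u\}$, so $u$ is an isolated point of $P$ --- contradicting the connectedness of $P$, which has at least one edge and hence more than one point. Therefore no such $U$ exists and $\overline{(0,1)\times E(P)}=P$.

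The main obstacle is precisely the order structure invoked in the last step: building the linear order from the family of separations $P-e$ and, above all, verifying that its order topology agrees with the subspace topology is the ``surprising amount of set-up'' referred to before the statement, and is carried out in \cite{GLS} rather than reproduced here. A self-contained alternative would try to reach the contradiction directly from Lemma~\ref{lem:pseudo-arc-redundancy}, Remark~\ref{rem:finite-top-cuts} and compactness by iterating single-edge cuts that successively isolate $u$; the difficulty there is that an open clump of vertices need not be closed, so connectedness cannot be applied naively, which is why the order-theoretic route is the cleaner one to pursue.
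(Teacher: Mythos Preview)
The paper does not prove this lemma at all: it is quoted verbatim from \cite{GLS}*{Remark~4.4}, and the surrounding text explicitly says ``We forego this technical set-up and simply use the following lemma''. So there is no in-paper proof to compare against.

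Your sketch is essentially a reconstruction of how the result sits inside the \cite{GLS} framework rather than an independent argument. Steps~1--3 (reducing to density of inner points and showing that an open vertex-only set $U$ avoids every closed edge $\iota_e([0,1])$) are clean and self-contained. The decisive Step~4, however, leans entirely on the linear order on a pseudo-arc whose order topology agrees with the subspace topology --- exactly the ``surprising amount of set-up'' the paper declines to reproduce and which is the substance of the relevant part of \cite{GLS}. You acknowledge this openly, which is fair, but it means your proposal is not a genuinely different route: it is the \cite{GLS} route with the hard part black-boxed. Given that black box, your deduction that any $u\in U$ would be isolated (via Lemma~\ref{lem:pseudo-arc-redundancy} forcing a separating edge between any two vertices of an order interval) is correct.

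Your closing remark about a ``self-contained alternative'' via iterated single-edge cuts is the more interesting direction if you want something the paper does not already defer to \cite{GLS}; as you note, the obstacle is that an open set of vertices need not be closed, so a naive clopen argument fails. Making that work would require something like showing $U$ (or a suitable subset) is also closed using compactness and the finiteness of topological cuts (Remark~\ref{rem:finite-top-cuts}) --- but that is not carried out here, so as written the proposal stands or falls with the imported order structure.
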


Lemma~\ref{lem:parc_edges} immediately implies that if two vertices~$a$ and~$b$ of a graph-like space~$G$ are joined by two distinct pseudo-arcs~$P$ and~$Q$ then there is an edge~${e \in E(G)}$ which lies on exactly one of the two pseudo-arcs. 
In fact slightly more is true: both~$P$ and~$Q$ contain an edge that does not lie on the other pseudo-arc. 
For if the edge set of~$Q$ was a proper subset of the edge set of~$P$ then~$Q$ would be disconnected as the removal of any edge from~$P$ separates~$a$ and~$b$ in~$P$.

\begin{proof}[Proof of Proposition~\ref{prop:TLS_equivalent}] \

    \ref{item:TLS-unique} $\Rightarrow$ \ref{item:TLS-max-acyclic}:
    Let~$G$ be a compact loop-free graph-like space with property~\ref{item:TLS-unique}. 
    Suppose~$C$ is a pseudo-circle in~$G$; then for any~${e \in E(C)}$ both~$e$ and~${C-e}$ define pseudo-arcs in~$G$ joining the end-vertices of~$e$, contradicting~\ref{item:TLS-unique}.
    Now let $G'$ be a graph-like space with $V(G')=V(G)$ and $G'-F=G$ for some non-empty $F\subseteq E(G')\smallsetminus E(G)$. Let $e\in F$ be an edge with end-vertices $a$ and $b$. Then $e$ defines a pseudo-arc $P$ between $a$ and $b$ in $G'$. Let $Q$ be the unique pseudo-arc in $G$ joining $a$ and $b$. Then $P$ and $Q$ intersect only in $a$ and $b$, and hence their union is a pseudo-circle in $G'$ by Lemma~\ref{lem:pseudo-circle}.
    
    \ref{item:TLS-max-acyclic} $\Rightarrow$ \ref{item:TLS-con-acyclic}:
    Let~$G$ be a compact loop-free graph-like space with property~\ref{item:TLS-max-acyclic}. 
    Suppose~$G$ is not connected. Then $G$ is not pseudo-arc connected by Theorem~\ref{thm:GLS_connected}.
    Let $a$ and $b$ be a pair of vertices that are not connected by any pseudo-arc in $G$. In particular there is no edge between $a$ and $b$. Let $G'$ be a graph-like space with $V(G')=V(G)$ such that $G=G'-\{e\}$, where $e$ is an edge in $G'$ joining $a$ and $b$. 
    Then $G'$ contains a pseudo-circle~$C$, which has to contain $e$ as otherwise $C$ would be a pseudo-circle in $G$. But then by Lemma~\ref{lem:pseudo-circle} ${C-e}\subseteq G$ is a pseudo-arc between the end-vertices of $e$, showing that $a$ and $b$ are joined by a pseudo-arc in $G$.
    
    \ref{item:TLS-con-acyclic} $\Rightarrow$ \ref{item:TLS-min-con}: 
    Let~$G$ be a compact loop-free graph-like space with property~\ref{item:TLS-con-acyclic}. 
    Suppose~${G-e}$ is still connected for some~${e \in E(G)}$ with end-vertices~$a$ and~$b$. 
    Then ${G-e}$ contains a pseudo-arc~$P$ between~$a$ and~$b$ by Theorem~\ref{thm:GLS_connected}, which together with~$e$ forms a pseudo-circle by Lemma~\ref{lem:pseudo-circle}.

    \ref{item:TLS-min-con} $\Rightarrow$ \ref{item:TLS-unique}: 
    Let~$G$ be a compact loop-free graph-like space with property~\ref{item:TLS-min-con}. 
    Theorem~\ref{thm:GLS_connected} implies that~$G$ is pseudo-arc connected. 
    For the uniqueness suppose~$G$ contains two different pseudo-arcs~$P$ and~$Q$ between two vertices~$a$ and~$b$. 
    Lemma~\ref{lem:parc_edges} implies that there is an edge~${e \in E(G)}$ which lies on exactly one of the two pseudo-arcs. 
    But then~${G-e}$ is still pseudo-arc connected\footnote{See Lemma~4.16 in~\cite{GLS}.} and therefore connected, a contradiction.
\end{proof}

\vspace{0.2cm}

Similarly to graph-theoretical trees every tree-like space gives rise to a regular tree set, see Subsection~\ref{sec:edgeTSofTLS}. 
We will show that the tree-like spaces are rich enough that one can obtain every regular tree set from them.
This is in contrast to Section~\ref{sec:tame-tree-sets} where we showed that the regular tree sets coming from trees are precisely those with no chain of order type~${\omega+1}$.
This restriction was owed to the fact that graph-theoretical trees cannot have edges that are the limit of other edges. 
But tree-like spaces \emph{can} have limit edges, so this is no longer a restriction.

In Subsection~\ref{sec:TStoTLS} we construct a corresponding regular tree set for a given tree-like space, and in Subsection~\ref{sec:treesetsandTLS} we will prove the characterisation analogously to the one in Section~\ref{sec:tame-tree-sets} by showing:
\begin{itemize}
    \item the tree-like space constructed from the edge tree set of a tree like space~$T$ is isomorphic to~$T$;
    \item the edge tree set of the tree-like space constructed from a regular tree set~$\tau$ is isomorphic to~$\tau$.
\end{itemize}

\subsection{The edge tree set of a tree-like space}
\label{sec:edgeTSofTLS}

For a tree-like space~$T$ we can define the \emph{edge tree set}~$\tau(T)$ in a way that is very similar to the definition of~$\tau(T)$ in Section~\ref{sec:tame-tree-sets}. 
Let
\[ 
    \vE(T) := \big\{ (\iota_e(0),\iota_e(1)) \,\big|\, e \in E(T) \big\} \cup \big\{ (\iota_e(1),\iota_e(0)) \,\big|\, e \in E(T) \big\}
\]
be the set of \textit{oriented edges} of~$T$. 
As tree-like spaces cannot contain loops every element of~$\vE(T)$ is a pair of two distinct vertices of~$T$. 
For vertices~${u, v \in V(T)}$ let~${P(u,v)}$ be the unique pseudo-arc in~$T$ with end-vertices~$u$ and~$v$.
Then~${\tau(T) := (\vE(T),\le,*)}$ becomes a separation system by setting~${(x,y)^* := (y,x)}$ and ${ (x,y) < (v,w)}$ for~${(x,y), (v,w) \in \vE(T)}$ with~${\{x,y\} \neq \{v,w\}}$ whenever
\[ 
    P(y,v) \subseteq P(x,v) \subseteq P(x,w).
\]
It is straightforward to check that~$\tau(T)$ is a regular tree set.

\subsection{The tree-like space of a tree set}\label{sec:TStoTLS}

Let~${\tau = (\vec E, \leq, {}^*)}$ be a regular tree set; 
we define the \emph{tree-like space corresponding to~$\tau$}, denoted~${T(\tau)}$. 
Let~${V := \vO(\tau)}$ be the set of consistent orientations and~$E$ the set of unoriented separations of~$\tau$. 
As in Section~\ref{sec:tame-tree-sets} let~$O(\vs)$ be the unique~${O \in \vO(\tau)}$ in which~$\vs$ is maximal. 
We define the tree-like space~$T(\tau)$ with vertex set~$V$ and edge set~$E$, 
that is with ground set~${V \cup \big( (0,1) \times E \big)}$. 
For this we need to define the maps~${\iota_e \colon [0,1] \to T(\tau)}$.

Fix any orientation~$O'$ of~$\tau$. 
For each~${\ve \in O'}$ let ${\iota_e \colon [0,1] \to T}$ be the map
\[
    \iota_e(x) = \left\{
    \begin{array}{cl}
        O(\ev),&\qquad x=0\\
        (x,e),&\qquad 0<x<1\\
        O(\ve),&\qquad x=1
    \end{array}
    \right..
\]
So far the definition of~$V$ and the adjacencies in~$T(\tau)$ have been analogous to the construction from Section~\ref{sec:tame-tree-sets}. 
But to make~$T(\tau)$ into a graph-like space we also need to define a topology.

For~${\ve \in O'}$ let~${E^+(\ve)}$ be the set of all~${\vs \in O'}$ with~${\ve < \vs}$ or~${\ve < \sv}$, and~${E^-(\ve)}$ the set of all~${\vs \in O'}$ with~${\vs < \ve}$. 
For~${\ve \in O'}$ and~${r \in (0,1)}$ set
\[
    S(\ve,r) := \{ O \in \mathcal{O}(\tau) \, | \, \ve \in O \} \cup \big( (0,1)\times E^+(\ve) \big) \cup \big( (r,1) \times e \big) 
\]
and
\[
    S(\ev,r) := \{ O \in \mathcal{O}(\tau) \, | \, \ev \in O \} \cup \big( (0,1)\times E^-(\ve) \big) \cup \big( (0,r) \times e \big). 
\]
We define the sub-base of the topology on~$T(\tau)$ as 
$\mathcal{S} := \big\{ S(\ve,r) \,\big|\, \ve \in \tau,\, r \in (0,1)\, \big\}$. 
Note that only the notation depends on the choice of~$O'$ but the topology on~$T(\tau)$ does not. 
It is clear that~$T(\tau)$ is a graph-like space: 
for any two vertices~${a,b \in V}$ pick any~$\ve$ in the symmetric difference of~$a$ and~$b$, viewed as orientations of~$\tau$. 
Then~${S(\ve,\frac{1}{2})}$ and~${S(\ev,\frac{1}{2})}$ are disjoint open sets partitioning~$V$ and~${\{a,b\}}$.

\begin{lemma}
    \label{lem:TStoTLS-compact}
    $T(\tau)$ is compact.
\end{lemma}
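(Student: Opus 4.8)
The plan is to exhibit $T(\tau)$ as a closed subspace of the compact product $\prod_{e\in E}[0,1]$, which is compact by Tychonoff's theorem. As preparation I would apply the Extension Lemma~\ref{lem:extension} to the empty partial orientation to see that $\tau$ admits a consistent orientation; since the topology of $T(\tau)$ does not depend on the choice of~$O'$, I may then assume $O'$ itself is consistent. Writing $\ve$ for the orientation of $e\in E$ lying in $O'$ (so that $\iota_e(0)=O(\ev)$ and $\iota_e(1)=O(\ve)$), I would define a coordinate map $f_e\colon T(\tau)\to[0,1]$ which sends the inner point $(x,e)$ to~$x$, and sends every other point --- an inner point of some edge $g\ne e$, or a vertex --- to $1$ or to $0$ according to which of the two ``sides'' of~$e$ it lies on; that is, according to whether $\ve<\vd$ or $\ev<\vd$ for some orientation~$\vd$ of~$g$, respectively whether $\ve$ or $\ev$ lies in the vertex. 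Using that $\tau$ is a regular tree set one checks that exactly one of the two alternatives occurs (so $f_e$ is well defined), and that --- since $O'$ is consistent --- $E^+(\ve)$ and $E^-(\ve)$ are precisely the two sides of~$e$, whence $S(\ve,r)=f_e^{-1}\big((r,1]\big)$ and $S(\ev,r)=f_e^{-1}\big([0,r)\big)$ for $r\in(0,1)$, with $f_e^{-1}\big((0,1]\big)$ and $f_e^{-1}\big([0,1)\big)$ unions of such sets. Hence each $f_e$ is continuous, and so is $\Phi:=(f_e)_{e\in E}\colon T(\tau)\to\prod_{e\in E}[0,1]$.

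Next I would check that $\Phi$ is a topological embedding. It is injective, since two distinct points of $T(\tau)$ are separated by some coordinate: two inner points of a common edge $g$ by the $g$-coordinate, an inner point of an edge $g$ and any other point by the $g$-coordinate, and two distinct vertices by the $g$-coordinate for any separation~$g$ on which they disagree. And it is open onto its image, because (using injectivity) $\Phi$ maps $S(\ve,r)$ to $\Phi(T(\tau))\cap\pi_e^{-1}\big((r,1]\big)$ and $S(\ev,r)$ to $\Phi(T(\tau))\cap\pi_e^{-1}\big([0,r)\big)$, where $\pi_e$ is the $e$-th projection.

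The crux --- and the step I expect to cause the most trouble --- is showing that $\Phi(T(\tau))$ is closed in $\prod_{e\in E}[0,1]$. Here $\Phi(T(\tau))=\Phi(V)\cup\bigcup_{e\in E}A_e$ with $A_e:=\Phi\big(\iota_e([0,1])\big)$: each $A_e$ is a compact arc with both endpoints in $\Phi(V)$ (as $\iota_e$ is a continuous injection out of the compact $[0,1]$), and $\Phi(V)$ is closed, being the set of those $\xi\in\{0,1\}^E$ whose associated orientation of~$\tau$ is \emph{consistent} --- a condition carving out a closed subset of $\{0,1\}^E$, since consistency is a conjunction of constraints each involving only two coordinates. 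A union of a closed set and infinitely many compact arcs need not be closed, so I would argue with nets: given a net $(\xi_\alpha)$ in $\Phi(T(\tau))$ with $\xi_\alpha\to\xi$, either $\xi_\alpha\in\Phi(V)$ cofinally (and then $\xi\in\Phi(V)$), or $\xi_\alpha\in A_{e_0}$ cofinally for a fixed $e_0$ (and then $\xi$ lies in the compact set $A_{e_0}$), or eventually $\xi_\alpha=\Phi\big(\iota_{e_\alpha}(x_\alpha)\big)$ with the $e_\alpha$ escaping every fixed edge. In the last case every coordinate of~$\xi$ is an eventual limit of values in the discrete closed set $\{0,1\}$, so $\xi\in\{0,1\}^E$; and $\xi$ must encode a consistent orientation $O_\xi$, for if $\av\le\vb$ for distinct $a,b$ with $\va,\vb\in O_\xi$, then for large~$\alpha$ (with $e_\alpha\ne a,b$ and $\xi_\alpha$ agreeing with~$\xi$ on the $a$- and $b$-coordinates) the point $\iota_{e_\alpha}(x_\alpha)$ would satisfy $\va<\vd_a$ and $\vb<\vd_b$ for some orientations $\vd_a,\vd_b$ of~$e_\alpha$; but then either $\vd_a=\vd_b$, making $\dv_a$ a trivial separation (witnessed by~$a$), or $\vd_a\ne\vd_b$, forcing $\vd_a<\va<\vd_a$ --- both impossible in a tree set. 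Hence $\xi\in\Phi(V)$. Therefore $\Phi(T(\tau))$ is a closed, hence compact, subset of $\prod_{e\in E}[0,1]$, and since $\Phi$ is a homeomorphism onto it, $T(\tau)$ is compact.
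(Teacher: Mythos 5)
Your proof is correct, but it takes a genuinely different route from the paper's. The paper verifies compactness directly via the Alexander sub-base theorem: given a sub-basic cover $\mathcal{C}$ with no finite sub-cover, either two of the separations involved point towards each other, in which case two members of $\mathcal{C}$ already cover everything up to the inner points of one edge, or else the inverses of the separations involved form a consistent partial orientation, which by the Extension Lemma extends to a consistent orientation --- a vertex covered by no member of $\mathcal{C}$. You instead realise $T(\tau)$ as a closed subspace of the Tychonoff cube $\prod_{e\in E}[0,1]$, with the vertices landing in $\{0,1\}^E$ as precisely the set of consistent orientations (closed, since consistency is a conjunction of two-coordinate constraints), and with the limit of a net of inner points of ``escaping'' edges shown to encode a consistent orientation by a case analysis on the two possible orientations $\vd_a,\vd_b$ of $e_\alpha$. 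The two arguments rest on the same combinatorial facts --- your closedness of $\Phi(V)$ plays exactly the role the Extension Lemma plays in the paper --- but yours additionally produces an explicit embedding of $T(\tau)$ into a cube and continuous coordinate functions $f_e$, at the cost of being substantially longer than the paper's half-page application of Alexander's lemma. Two details you gesture at are worth spelling out: the well-definedness of $f_e$ on inner points of $g\ne e$ and the identities $S(\ve,r)=f_e^{-1}\big((r,1]\big)$, $S(\ev,r)=f_e^{-1}\big([0,r)\big)$ need that $O'$ is consistent (which you correctly arrange) together with the fact that for distinct nested $e,g$ in a tree set exactly one of the four relations between their orientations holds; here nestedness and the absence of trivial elements suffice, and regularity is not actually needed.
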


\begin{proof}
    By the Alexander~sub-base~theorem from general topology it suffices to show that any open covering of sets in~$\mathcal{S}$ has a finite sub-cover. 
    Suppose that~$\mathcal{C}$ is a sub-basic open cover of~${T(\tau)}$ with no finite sub-cover. 
    Let~$E(\mathcal{C})$ be the set of all~${\ve \in \tau}$ such that~${S(\ve,x) \in \mathcal{C}}$ for some~${x \in(0,1)}$. 
    If~${\vr \leq \sv}$ for any~${\vr, \vs \in E(\mathcal{C})}$ then their corresponding sets in~$\mathcal{C}$ already cover all of~$T(\tau)$, except possibly for~${(0,1 )\times r}$ if~${\vr = \sv}$, which can be finitely covered. 
    Thus we may assume that~${\vr \not\le \sv}$ for all~${\vr, \vs \in E(\mathcal{C})}$. 
    Then the set
    \[
        E^*(\mathcal{C}) := \{ \ev \,|\, \ve \in E(\mathcal{C}) \}
    \]
    is a consistent partial orientation of~$\tau$, so by the Extension Lemma~\ref{lem:extension} there is an~${O \in \vO(\tau)}$ with~${E^*(\mathcal{C})\subseteq O}$. 
    But~${O \notin S(\ve,r)}$ for every~${\ve \in E(\mathcal{C})}$ and~${r \in(0,1)}$, so~$\mathcal{C}$ was not a cover of~$T$. 
    Therefore~$T$ is a compact graph-like space.
\end{proof}

\begin{lemma}
    \label{lem:TStoTLS-connected}
    $T(\tau)$ is connected, but~${T(\tau)-e}$ is not for every~${e \in E}$.
\end{lemma}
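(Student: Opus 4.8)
The plan is to verify the two halves of the statement separately, using the characterisation of tree-like spaces from Proposition~\ref{prop:TLS_equivalent}: it suffices to show that $T(\tau)$ is connected and that $T(\tau)-e$ is disconnected for each edge $e\in E$, which is exactly condition~\ref{item:TLS-min-con}. Since we already know from Lemma~\ref{lem:TStoTLS-compact} that $T(\tau)$ is compact (and it is loop-free by construction, as $O(\ve)\ne O(\ev)$ for every $\ve$), this will simultaneously establish that $T(\tau)$ is a genuine tree-like space.

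For connectedness I would argue by contradiction in the spirit of Lemma~\ref{lem:T_connected} combined with Theorem~\ref{thm:GLS_connected}. Suppose $T(\tau) = U \dcup U'$ with $U, U'$ nonempty, open, and closed. Each vertex of $T(\tau)$ is a consistent orientation of $\tau$; pick vertices $O\in U$ and $O'\in U'$. As in the proof of Lemma~\ref{lem:T_connected}, let $\vs_1$ be the unique separation with $\vs_1\in O'$ and $O(\sv_1)=O$, and $\vsdash$ the unique separation in $O$ with $O(\vsdash)=O'$; then $\vs_1\le\vsdash$. If $\vs_1=\vsdash$ then $O$ and $O'$ differ only in the single separation $s_1$, and then the edge $e=\{O,O'\}$ — more precisely the path $\iota_{e}([0,1])$, whose interior is connected and which meets both $O$ and $O'$ — is connected and meets both $U$ and $U'$, a contradiction. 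Otherwise Lemma~\ref{lem:infinite_flips} applies and we may iterate; either the process terminates, yielding a finite sequence of edges forming a connected ``pseudo-path'' from $O$ to $O'$ (each consecutive pair of orientations differing in exactly one separation, so each edge-image is connected and the union is connected, again contradicting the clopen partition), or it runs forever producing a strictly increasing chain $(\vs_n)_{n\in\NN}$ with $\vs_n\le\vsdash$ for all $n$. In the latter case, unlike in Section~\ref{sec:tame-tree-sets}, this chain of order type $\omega+1$ is permitted; so I must argue that the corresponding ``limit'' is still connected to both sides. The natural move is to use compactness: the orientations $O(\vs_n)$ form a sequence of vertices, and one considers the subspace $X=\bigcup_n \iota_{s_n}([0,1]) \cup \{\text{the limit orientations}\}$; the key point is that $X$ is the closure of $\bigcup_n (0,1)\times s_n$, hence connected (closure of a connected set), and $X$ meets both $U$ and $U'$ since it contains $O$ and is ``cofinal towards'' $O'$. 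Making this precise — identifying exactly which orientation sits at the limit and checking it lies in the same component — will require a short argument with the sub-base $\cS$: a sub-basic neighbourhood $S(\ve,r)$ of a limit point either contains all but finitely many of the $(0,1)\times s_n$ or contains none, by the structure of $E^+, E^-$.

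For the second half, fix $e\in E$ with orientations $\ve,\ev$ and set $U := \{O\in\vO(\tau)\mid \ve\in O\}\cup\big((0,1)\times E^+(\ve)\big)\cup\big((0,1)\times e\big)$ restricted appropriately, i.e. I would show that $S(\ve,r)$ and $S(\ev,r')$ for suitable $r,r'$, together with the half-open pieces of $e$ itself removed, partition $T(\tau)-e$ into two nonempty clopen sets. Concretely, in $T(\tau)-e$ the inner points of $e$ are gone, so $\{O:\ve\in O\}\cup\big((0,1)\times E^+(\ve)\big)$ and $\{O:\ev\in O\}\cup\big((0,1)\times E^-(\ve)\big)$ should be the two sides; one checks they are each open (each is a union of sub-basic sets $S(\cdot,\cdot)$, letting $r\to 0$ or $r\to 1$), that they are disjoint (an orientation contains exactly one of $\ve,\ev$; an inner point $(x,s)$ has $s$ comparable to $e$ on exactly one side since $\tau$ is nested and no triviality forces both), and that together with the removed inner points of $e$ they exhaust $T(\tau)-e$ (here nestedness of $\tau$ is essential: every $s\ne e$ has an orientation below $\ve$ or below $\ev$, equivalently $E^+(\ve)\dcup E^-(\ve)\dcup\{e\} = E$, using regularity to rule out degenerate cases). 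Both sides are nonempty because $\ve$ and $\ev$ each lie in some consistent orientation by Corollary~\ref{cor:unique_star}/the Extension Lemma.

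The main obstacle I anticipate is the limit case in the connectedness argument: unlike the tame setting of Section~\ref{sec:tame-tree-sets}, the iterated flipping need not terminate, and one genuinely has to use the topology of $T(\tau)$ — specifically compactness (Lemma~\ref{lem:TStoTLS-compact}) and the explicit sub-base — to see that the ``$\omega$-th'' orientation reached in the limit is a legitimate vertex of $T(\tau)$ lying in the closure of the union of the flipped edges, so that it cannot be separated from $O$ by a clopen set. Getting the bookkeeping right about which orientation is the limit (it should be the unique consistent orientation extending $\{\sv_n : n\in\NN\}$, which exists and is consistent by the Extension Lemma since this partial orientation has no co-trivial element) and verifying its sub-basic neighbourhoods behave as claimed is where the real work lies; the rest is routine unwinding of the definitions of $\cS$, $E^+$, and $E^-$.
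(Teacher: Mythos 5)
Your treatment of the second half of the statement matches the paper's: the restrictions of~$S(\ve,\tfrac12)$ and~$S(\ev,\tfrac12)$ to~${T(\tau)-e}$ are the required nonempty disjoint open sets, and your check that $E^+(\ve)\dcup E^-(\ve)$ exhausts the remaining edges (using nestedness and regularity) is sound. The problem is the connectedness half, which is where the real content of the lemma lies, and there your argument has two genuine gaps. First, the justification of your key claim is wrong: the set $\bigcup_n (0,1)\times\{s_n\}$ is \emph{not} connected --- the inner points of each edge form an open subset of~$T(\tau)$ (indeed $(r,r')\times\{e\}=S(\ve,r)\cap S(\ev,r')$), so each $(0,1)\times\{s_n\}$ is clopen in that union --- and hence ``closure of a connected set'' does not apply. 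Proving that the closure~$X$ of this union is connected is essentially as hard as the lemma itself; the paper makes exactly this point in the footnote to Lemma~\ref{lem:tau-T-tau-iso-TLS}, where such closures are the pseudo-arcs $P(u,v)$ and their connectedness is obtained only ``by repeating the proof that $T(\tau')$ is connected''. Second, even granting that~$X$ is connected, a single limit step does not close the argument: the limit vertex reached after~$\omega$ flips is some consistent orientation containing all the~$\vs_n$, which need not be~$O'$ and need not lie in~$U'$; if it lies in~$U$ then $X\subseteq U$ yields no contradiction and the flipping must be restarted from it. Since chains in a general regular tree set can have arbitrary order type, this forces a transfinite iteration that your sketch does not set up and whose limit stages of uncountable cofinality are not obviously handled.

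The paper avoids both problems by arguing directly on a hypothetical partition of~$T(\tau)$ into nonempty disjoint open sets $A$ and $B$ rather than trying to build a connected path. Each edge interior lies wholly in~$A$ or wholly in~$B$; choosing comparable $\va\in\tau_A$ and $\vb\in\tau_B$ with $\va\le\bv$, one takes the chain~$C$ of separations between them, extends the union of the maximal initial segments $C_A\subseteq\tau_A$ of~$C$ and $C_B\subseteq\tau_B$ of~$C^*$ to a consistent orientation~$O$ via the Extension Lemma, and then derives a contradiction from a size-minimal sub-basic neighbourhood $\bigcap_{\vx\in X}S(\vx,r(\vx))$ of~$O$ contained in, say,~$A$: such a neighbourhood is forced to contain the interior of an edge from~$\tau_B$. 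I would recommend reworking your connectedness argument along these lines; the flipping machinery of Lemma~\ref{lem:infinite_flips} is the right tool in the tame setting of Section~\ref{sec:tame-tree-sets}, but it does not transfer to general regular tree sets without precisely the transfinite bookkeeping you have flagged as ``the real work''.
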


\begin{proof}
    The latter follows immediately from the definition of~$\mathcal{S}$: 
    for any edge~${e \in E}$ the sets~${S(\ve,\frac{1}{2})}$ and~${S(\ev,\frac{1}{2})}$ define a partition of~${T(\tau)-e}$ into non-empty disjoint open sets. 
    
    To show that~$T$ is connected first note that any non-empty open set in~$T$ contains an inner point of an edge. 
    Suppose that~${A, B}$ are non-empty disjoint open sets partitioning~$T$. 
    For any edge~${e \in E}$ the image of~$\iota_e$ in~$T$ is connected, 
    hence every edge whose inner points meet~$A$ is completely contained in~$A$, and similarly for~$B$. 
    Write~$\tau_A$ for the set of~${\ve \in \tau}$ with~${\mathring{e} \subseteq A}$, and~${\tau_B}$ for the set of~${\ve \in \tau}$ with~${\mathring{e} \subseteq B}$. 
    Then~$\tau_A$ and~$\tau_B$ partition~$\tau$ and are closed under involution. 
    Fix any~${\va \in \tau_A}$ and~${\vb \in \tau_B}$ with~${\va \leq \bv}$ 
    and write~${C := \{ \vr \in \tau \,|\, \va \leq \vr \leq \bv \}}$ for the chain of elements between~$\va$ and~$\bv$. 
    Let~$C_A$ be a maximal initial segment of~$C$ with~${C_A \subset \tau_A}$ and~${C_B}$ a maximal initial segment of~$C^*$ with~${C_B \subseteq \tau_B}$, where~$C^*$ is the image of~$C$ under the involution.
    The set~${C_A \cup C_B}$ is a consistent partial orientation of~$\tau$, so by the Extension Lemma~\ref{lem:extension} there is an~${O \in V}$ with~${C_A \cup C_B \subseteq O}$.
    Suppose that~${O \in A}$, say. 
    Let~${X \subseteq \tau}$ be minimal in size with the property that
    \[
        O \in \mathcal{X} := \bigcap_{\vx\in X} S(\vx,r(\vx)) \subseteq A
    \]
    for suitable~${r(\vx) \in (0,1)}$. 
    From our assumptions it follows that such an~$X$ exists and is a finite subset of~$O$, and the minimality implies that~$X$ is a star. 
    Observe that~${\mathring{b} \subseteq S(\vx,r(\vx))}$ for all~${\vx \in X}$ with~${\vx < \bv}$. 
    As~$\mathcal{X}$ does not meet~$B$ there must be a (unique) ${\vx \in X}$ with~${\vx \geq \vb}$ and thus~${\xv \in C}$. 
    If~${\vx \in \tau_B}$ then~$\mathcal{X}$ again meets~$B$, hence~${\vx \in \tau_A}$. 
    As ${\vx \in O}$ and thus ${\xv \notin C_A}$, there is a ${\vt \in \tau_B \cap O}$ with~${\vx \leq \vt}$. 
    But then~${\mathring{t}\subset\mathcal{X}}$, a contradiction. 
    Therefore~${T(\tau)}$ is connected.
\end{proof}

Hence we have shown that $T(\tau)$ is indeed a tree-like space.

\subsection{Regular tree sets and tree-like spaces -- A characterisation}
\label{sec:treesetsandTLS}

\begin{lemma}
    \label{lem:tau-T-tau-iso-TLS}
    Any regular tree set $\tau'$ is isomorphic to $\tau(T(\tau'))$.
\end{lemma}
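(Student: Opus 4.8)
The plan is to exhibit the obvious bijection between $\tau'$ and $\tau(T(\tau'))$ and then invoke Lemma~\ref{lem:Isomorphism}. Write $O(\vt)$ for the unique consistent orientation of $\tau'$ in which $\vt$ is maximal (Extension Lemma~\ref{lem:extension}), and define $\phi\colon\tau'\to\tau(T(\tau'))$ by $\phi(\vs):=(O(\sv),O(\vs))$. By the construction of $T(\tau')$ in Subsection~\ref{sec:TStoTLS}, the vertices $O(\sv)$ and $O(\vs)$ are precisely the two end-vertices of the edge $s$, and the two orientations of that edge in $\vE(T(\tau'))$ are $\phi(\vs)$ and $\phi(\sv)$; hence $\phi$ surjects onto $\vE(T(\tau'))$ and commutes with the involution. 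Since $T(\tau')$ is a tree-like space (by Lemmas~\ref{lem:TStoTLS-compact} and~\ref{lem:TStoTLS-connected} together with Proposition~\ref{prop:TLS_equivalent}), $\tau(T(\tau'))$ is a regular tree set, and as $\tau'$ is nested, Lemma~\ref{lem:Isomorphism} reduces the proof to checking that $\phi$ is injective and order-preserving.

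I would first record two elementary facts, both using regularity of $\tau'$ to exclude small separations. \emph{(a)} For every $\vs\in\tau'$ we have $O(\sv)=(O(\vs)\setminus\{\vs\})\cup\{\sv\}$: the right-hand side is readily checked to be a consistent orientation in which $\sv$ is maximal, hence equals $O(\sv)$ by the uniqueness part of the Extension Lemma. \emph{(b)} If $\va<\vb$ in $\tau'$ then $\va\in O(\vb)$: otherwise $\av\in O(\vb)$, and then $\bv\le\av$ with $\vb,\av\in O(\vb)$ contradicts consistency (note $a\ne b$, as $\tau'$ has no small separations). In particular, when $\vs<\vr$ we get $\vs\in O(\vr)$, hence $\vs\in O(\rv)$ by \emph{(a)}, and, applying \emph{(b)} to $\rv<\sv$, also $\rv\in O(\sv)$. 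Injectivity of $\phi$ now follows from \emph{(a)}: if $O(\vs)=O(\vr)$ and $O(\sv)=O(\rv)$, then substituting \emph{(a)} on both sides and comparing forces $s=r$ (using $\vr\ne\rv$), and then $\vs=\vr$ because both are maximal in the orientation $O(\vs)=O(\vr)$.

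It remains to prove that $\phi$ is order-preserving. Let $\vs<\vr$ in $\tau'$; then $s\ne r$, and by \emph{(b)} the vertices $O(\vs)$, $O(\rv)$, $O(\vr)$ all induce the orientation $\vs$ of the edge $s$, whereas $O(\sv)$ induces $\sv$; in particular $\{O(\sv),O(\vs)\}\ne\{O(\rv),O(\vr)\}$, so $\phi(\vs)$ and $\phi(\vr)$ have distinct underlying edges, and by the definition of the order on $\tau(T(\tau'))$ it remains to show
\[
    P\bigl(O(\vs),O(\rv)\bigr)\subseteq P\bigl(O(\sv),O(\rv)\bigr)\subseteq P\bigl(O(\sv),O(\vr)\bigr),
\]
where $P(u,v)$ denotes the unique pseudo-arc in $T(\tau')$ between $u$ and $v$. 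The crucial observation is: if two vertices $Q,Q'$ induce the same orientation of an edge $e$, then $e\notin E(P(Q,Q'))$, so $P(Q,Q')\subseteq T(\tau')-e$ by Lemma~\ref{lem:parc_edges}; as $P(Q,Q')$ is connected and $T(\tau')-e$ is the disjoint union of the two open ``sides'' of $e$ (as exhibited in the proof of Lemma~\ref{lem:TStoTLS-connected}), the whole pseudo-arc lies on the side containing $Q$ and $Q'$. Applying this with $e=s$, $Q=O(\vs)$, $Q'=O(\rv)$ --- both inducing $\vs$ --- yields $s\notin E(P(O(\vs),O(\rv)))$ and $O(\sv)\notin P(O(\vs),O(\rv))$. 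Hence the edge $s$, which is a nontrivial pseudo-arc from $O(\sv)$ to $O(\vs)$, and $P(O(\vs),O(\rv))$ share no edge and meet only in the vertex $O(\vs)$, so their union is a pseudo-arc from $O(\sv)$ to $O(\rv)$; by uniqueness of pseudo-arcs in a tree-like space (Proposition~\ref{prop:TLS_equivalent}) it equals $P(O(\sv),O(\rv))$, which gives the first inclusion. The second is obtained identically, now attaching the edge $r$ at the vertex $O(\rv)$ and using $\rv\in O(\sv)$ from \emph{(b)}.

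The step I expect to cost the most effort is this last pseudo-arc argument, which also needs the routine but slightly technical fact that the union of two pseudo-arcs meeting in a single common end-vertex is itself a pseudo-arc --- verified by observing that deleting an edge of one of the two pieces separates that piece's end-vertices while the other piece remains connected and attached on one side. Conceptually this is the topological counterpart of the iterative ``flipping'' construction for graph-theoretical trees in Section~\ref{sec:tame-tree-sets}: there a path between two orientations was assembled one flip at a time and failed to exist precisely when a chain of order type $\omega+1$ appeared, whereas here the limit edges of $T(\tau')$ absorb such chains and a single pseudo-arc connects the two orientations in one stroke.
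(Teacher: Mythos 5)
Your proposal uses the same map as the paper ($\phi(\vs)=(O(\sv),O(\vs))$, which is exactly the paper's $\phi$ written in terms of the orientations $O(\cdot)$ rather than the $\iota_e(i)$), the same reduction via Lemma~\ref{lem:Isomorphism} to checking that $\phi$ is a bijective, involution-commuting, order-preserving map, and your facts \emph{(a)} and \emph{(b)} are correct and match observations the paper uses implicitly. Where you genuinely diverge is in verifying order-preservation. The paper writes down the pseudo-arc explicitly as $P(u,v)=\overline{\bigcup\{\mathring e\mid \ve\in v\setminus u\}}$, defers to a footnote (citing the machinery of \cite{GLS}, or a rerun of the connectedness proof) the verification that this set really is the unique pseudo-arc between $u$ and $v$, and then obtains the three required inclusions by the trivial set computations $v\setminus y=(v\setminus x)\setminus\{\vr\}$ and $v\setminus x=(w\setminus x)\setminus\{\vs\}$. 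You instead never identify $P(u,v)$ explicitly and build the inclusions by concatenation: glue the single-edge pseudo-arc $s$ onto $P(O(\vs),O(\rv))$ at the shared end-vertex and invoke uniqueness of pseudo-arcs. This is a legitimate alternative and arguably more self-contained, since it replaces the paper's footnoted identification by two concrete topological facts.

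Be aware, though, that one of those two facts is thinner than you present it. The gluing lemma you flag is indeed routine. But your ``crucial observation'' --- that if $Q$ and $Q'$ both contain $\ve$ then $e\notin E(P(Q,Q'))$ --- is stated as if immediate, and it is not: knowing that $T(\tau')-e$ splits into the two open sides $S(\ve,\tfrac12)$ and $S(\ev,\tfrac12)$ with $Q,Q'$ on the same side does not by itself preclude $e\in E(P(Q,Q'))$, because $P(Q,Q')\cap S(\ve,\tfrac12)$ could a priori be disconnected, so the pseudo-arc condition ``$Q,Q'$ are separated in $P-e$'' is not directly contradicted. The clean way to close this is the fact from \cite{GLS} that deleting an edge from a pseudo-arc leaves exactly two connected pieces, one attached to each end-vertex of the deleted edge; then the piece containing $Q$ and the piece containing $Q'$ each lie wholly in $S(\ve,\tfrac12)$, yet one of them must contain $O(\ev)\in S(\ev,\tfrac12)$, a contradiction. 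This is essentially the same machinery the paper hides in its footnote, so your proof is not gaining rigour over the paper here --- it is just relocating the deferred step. With that fact supplied, your argument goes through.
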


\begin{proof}
    For two vertices ${u, v \in \vO(\tau')}$ the set ${C = v \setminus u}$ is a chain in~$\tau'$. 
    Set
    \[ 
        P(u,v) := \overline{ \bigcup \{ \mathring{e} \,|\, \ve \in C \}} \subseteq T(\tau').
    \]
    Then ${P(u,v) = P(v,u)}$ and~${P(u,v)}$ is the unique pseudo-arc in~$T$ with~$u$ and~$v$ as end-vertices\footnote{This follows immediately if one uses the machinery established in \cite{GLS}, which we do not introduce here. 
    Alternatively one can show the connectedness of~$P(u,v)$ by repeating the proof that~$T(\tau')$ is connected, and verifying the other properties of a pseudo-arc directly.}. 
    Define the map~${\phi \colon \tau' \to \vE(T(\tau'))}$ as
    \[
        \phi(\ve) := 
        \left\{
        \begin{array}{cl}
            (\iota_e(0),\iota_e(1)), \qquad \ve \in \iota_e(1)\\
            (\iota_e(1),\iota_e(0)), \qquad \ve \in \iota_e(0)
        \end{array}
        \right..
    \]
    This is a bijection between~$\tau'$ and~${\vE(T(\tau'))}$ that commutes with the involution. 
    The claim follows from Lemma~\ref{lem:Isomorphism} if we can show that~$\phi$ is order-preserving. 
    For this let~${\vr, \vs \in \tau'}$ with~${\vr < \vs}$. 
    Let~${(x,y)}$ be the end-vertices of~${r \in E(T(\tau'))}$ with~${\vr \in y}$ and~${(v,w)}$ the end-vertices of ${s \in E(T(\tau'))}$ with~${\vs \in w}$. 
    Then
    \[
        v \setminus y = (v\setminus x) \setminus \{\vr\}
    \]
    and
    \[
        v \setminus x = (w\setminus x) \setminus \{\vs\},
    \]
    so~${P(y,v) \subseteq P(x,v)\subseteq P(x,w)}$ and hence~${\phi(\vr) = (x,y) \leq (v,w) = \phi(\vs)}$.
\end{proof}

\begin{lemma}
    \label{lem:T-tau-T-iso-TLS}
    Any tree-like space $T'$ is isomorphic to $T(\tau(T'))$.
\end{lemma}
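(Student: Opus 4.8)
The plan is to construct an explicit isomorphism of graph-like spaces $\psi \colon T' \to T(\tau(T'))$ by matching vertices of $T'$ with consistent orientations of $\tau(T')$, and matching edges with themselves. First I would observe that $E(T') = E(\tau(T'))$ by the very definition of the edge tree set, so the edge sets of $T'$ and $T(\tau(T'))$ are literally the same; the only work is on the vertex level and on the topology. For a vertex $v \in V(T')$, I would associate the set $O_v$ of all oriented edges $\vec e \in \vec E(T')$ that ``point towards $v$'', i.e.\ such that $v$ lies in the component of $T'$ minus (the interior of) $e$ on the head side; concretely, using the pseudo-arc notation, $\vec e = (x,y) \in O_v$ iff $P(y,v) \subseteq P(x,v)$, which also covers the case $v \in \{x,y\}$ with the convention that $P(v,v)$ is the trivial pseudo-arc. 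The first key step is to check that $O_v$ is a consistent orientation of $\tau(T')$: exactly one orientation of each edge points towards $v$ (here one uses that $T'$ is a tree-like space, so there is a \emph{unique} pseudo-arc between any two vertices, and removing any edge of it separates the endpoints, by Definition~\ref{def:TLS}\ref{item:TLS-unique} and Lemma~\ref{lem:parc_edges}), and consistency $\vec s \le \vec r$, $\vec r,\vec s \in O_v$ follows from the nestedness of $\tau(T')$ together with the transitivity of the ``points towards $v$'' relation along pseudo-arcs.

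Next I would show that $v \mapsto O_v$ is a bijection from $V(T')$ onto $\vec{\mathcal O}(\tau(T')) = V(T(\tau(T')))$. Injectivity is immediate since two distinct vertices of a tree-like space are separated by some topological cut, hence some edge is oriented differently towards each. For surjectivity, given a consistent orientation $O$ of $\tau(T')$, I would need to produce a vertex $v$ with $O_v = O$. The idea is to consider the family of closed connected sub-graph-like spaces of $T'$ that ``absorb all of $O$'' in the appropriate sense — for each $\vec e \in O$ the sub-graph-like space should lie on the head side of $e$ — and show by a compactness argument (Zorn's Lemma, as in the proof of Theorem~\ref{thm:GLS_connected}, using that a decreasing intersection of closed connected sets in a compact Hausdorff space is closed and connected) that there is a minimal such space, which must be a single vertex $v$; then $O_v = O$ by minimality and consistency of $O$. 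This surjectivity step, together with the verification that $\psi$ respects the incidence/end-vertex structure (for an edge $e = \{x,y\}$ the vertices $\iota^{T'}_e(0),\iota^{T'}_e(1)$ map to $O(\vec e), O(\ev)$ in the sense of Section~\ref{sec:tame-tree-sets}, which hold by construction), is I expect the main obstacle: one has to be careful that ``minimal closed connected absorbing subspace is a vertex'' really uses compactness and the pseudo-arc structure in the right way, and that no limit phenomena obstruct it.

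Finally I would verify that $\psi$ is a homeomorphism. Since $\psi$ is a bijection on the underlying sets (identity on edge interiors, $v \mapsto O_v$ on vertices) and both spaces are compact Hausdorff, it suffices to show $\psi$ is continuous, i.e.\ that preimages of the sub-basic sets $S(\vec e, r)$ of $T(\tau(T'))$ are open in $T'$. Fixing the reference orientation $O'$ used to define the sub-base of $T(\tau(T'))$, the set $S(\vec e, r)$ consists of the inner points $(s,e)$ with $s > r$ together with all edge-interiors $\mathring f$ with $\vec e < \vec f$ or $\vec e < \fv$, together with all orientations containing $\vec e$; its preimage under $\psi$ is precisely the set of points of $T'$ lying strictly on the head side of $e$ past parameter $r$, which is open in the tree-like space topology because it is the complement of a closed set cut out along the pseudo-arc through $e$ (equivalently, it is one side of the topological cut $\{e\}$ together with a relatively open piece of $e$). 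Running the symmetric argument for $S(\ev,r)$ and invoking the Alexander sub-base theorem implicitly (continuity need only be checked on a sub-base) finishes continuity, hence $\psi$ is an isomorphism of graph-like spaces, so $T' \cong T(\tau(T'))$.
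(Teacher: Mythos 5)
Your proposal is correct and follows essentially the same route as the paper: the same vertex map sending $v$ to the orientation of every edge towards $v$, the identity on edge interiors, and continuity checked on the sub-basic sets $S(\ve,r)$ combined with the compact-to-Hausdorff argument. The only difference is that you spell out the surjectivity of $v\mapsto O_v$ via a Zorn/compactness argument, a step the paper leaves as ``easy to check''; your sketch of it is sound (though the fact that continuity may be verified on a sub-base is elementary and not the Alexander sub-base theorem).
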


\begin{proof}
    For ease of notation, we may assume without loss of generality that the arbitrary orientation of~$\tau(T')$ we fixed for the construction of~${T(\tau(T'))}$ is ${\{ ( \iota^{T'}_e(0), \iota^{T'}_e(1) )  \, | \, e \in E(T') \}}$.
    
    For every edge~${e \in E(T')}$ there is a unique~${j(v,e) \in \{0,1\}}$ such that~$v$ is in the same component of~${T' - e}$ as~${\iota^{T'}_e(j(v,e))}$ by Proposition~\ref{prop:TLS_equivalent}.
    We define a map ${\phi: V(T') \to V(T(\tau(T')))}$ by setting~$\phi(v)$ to be the orientation 
    \[
        \{ ( \iota^{T'}_e(1-j(v,e)), \iota^{T'}_e(j(v,e)) ) \, | \, e \in E(T) \}
    \]
    of~$\tau(T')$, which is easily verified to be consistent.
    
    We extend~$\phi$ to a map~${T' \to T(\tau(T'))}$ by setting ${\phi(r,e) := (r,\{ \iota^{T'}_e(0), \iota^{T'}_e(1) \})}$  
    for~${r \in (0,1)}$ and~${e \in E(T')}$.
    It is easy to check that~$\phi$ is a bijection and induces a bijection between~${V(T')}$ and~${V(T(\tau(T')))}$.
    Since~$T'$ is compact and~${T(\tau(T'))}$ is Hausdorff, we only need to check that~$\phi$ is continuous.
    For each~${e \in E(T')}$ and each~${r \in (0,1)}$ note that~${T' \setminus \{r\}}$ contains two connected components~${C(e,r,0)}$ and~${C(e,r,1)}$, where~${C(e,r,j)}$ denotes the component containing~${\iota^{T'}_e(j)}$.
    By construction, ${\phi(C(e,r,j)) = S( (\iota^{T'}_e(1-j),\iota^{T'}_e(j)), r)}$ and hence the preimage of any subbasis element is open.
\end{proof}

Altogether we have proven the main theorem of this section.

\begin{theorem}
    \begin{enumerate}
        \item A tree set is isomorphic to the edge tree set of a tree-like space if and only if it is regular.
        \item Any regular tree set~$\tau'$ is isomorphic to~${\tau(T(\tau'))}$.
        \item Any tree-like space~$T'$ is isomorphic to~${T(\tau(T'))}$.\qed
    \end{enumerate}
\end{theorem}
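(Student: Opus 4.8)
The theorem collects what has already been established in Lemmas~\ref{lem:tau-T-tau-iso-TLS} and~\ref{lem:T-tau-T-iso-TLS} together with the preparatory work of Subsections~\ref{sec:TStoTLS} and~\ref{sec:edgeTSofTLS}, so my plan is to assemble these pieces rather than to argue anything new from scratch. First I would deal with part~(2): given a regular tree set~$\tau'$, Lemma~\ref{lem:tau-T-tau-iso-TLS} already yields the isomorphism ${\tau' \cong \tau(T(\tau'))}$, so there is nothing further to do once we have checked, via Lemmas~\ref{lem:TStoTLS-compact} and~\ref{lem:TStoTLS-connected}, that~$T(\tau')$ is genuinely a tree-like space (it is compact, loop-free, connected, and edge-minimally connected, hence satisfies condition~\ref{item:TLS-min-con} of Definition~\ref{def:TLS}). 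Similarly part~(3) is exactly the statement of Lemma~\ref{lem:T-tau-T-iso-TLS}, modulo the observation from Subsection~\ref{sec:edgeTSofTLS} that~$\tau(T')$ is a regular tree set so that the construction~${T(\tau(T'))}$ is even defined.

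For part~(1), the forward direction is immediate: the edge tree set of any tree-like space is regular by the remark closing Subsection~\ref{sec:edgeTSofTLS}. The backward direction is precisely where parts~(2) and~(3) are needed: if~$\tau$ is a regular tree set, then~$T(\tau)$ is a tree-like space by the above, and Lemma~\ref{lem:tau-T-tau-iso-TLS} gives ${\tau \cong \tau(T(\tau))}$, exhibiting~$\tau$ as the edge tree set of a tree-like space. So the write-up is really just a bookkeeping argument threading the four lemmas together.

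\textbf{Anticipated obstacle.} Since all the substantive content is already proved, the only genuine point to be careful about is making sure the chain of citations is valid — in particular that Lemma~\ref{lem:tau-T-tau-iso-TLS}'s use of the footnoted claim that~$P(u,v)$ is the \emph{unique} pseudo-arc in~$T(\tau')$ with the prescribed end-vertices is legitimately available, since that uniqueness is exactly what makes~$T(\tau')$ a tree-like space via condition~\ref{item:TLS-unique}. If one wanted to avoid the footnote's appeal to the machinery of~\cite{GLS}, one could instead verify directly that~$T(\tau')$ satisfies~\ref{item:TLS-min-con} (which is what Lemma~\ref{lem:TStoTLS-connected} does) and then invoke Proposition~\ref{prop:TLS_equivalent} to get~\ref{item:TLS-unique} for free; this is the cleanest route and the one I would take. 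Given that, the proof is a one-line citation for each of the three parts, and I expect the theorem to be stated with a \qed and no separate proof block, exactly as printed — the work having been done in the preceding lemmas.
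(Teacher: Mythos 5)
Your proposal matches the paper exactly: the theorem is stated with a \qed and no separate proof block, the content being precisely the assembly of Lemmas~\ref{lem:TStoTLS-compact}, \ref{lem:TStoTLS-connected}, \ref{lem:tau-T-tau-iso-TLS} and~\ref{lem:T-tau-T-iso-TLS} together with the observation that~$\tau(T)$ is regular for any tree-like space~$T$. Your bookkeeping of which lemma yields which part, and your note on routing the uniqueness of pseudo-arcs through condition~\ref{item:TLS-min-con} and Proposition~\ref{prop:TLS_equivalent}, is sound and consistent with the paper's implicit argument.
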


Additionally, for distinct but comparable tree sets, we can say precisely in which way the corresponding trees from Theorem~\ref{thm:tametreesets} above are comparable: one will be a minor of the other.

Let us finish this section with two further results on how these constructions relate to substructures.

\begin{theorem}\label{thm:tlsminor1}
    Let~$\tau_1$,~$\tau_2$ be regular tree-sets with~${\tau_1 \subseteq \tau_2}$. 
    Then~${T(\tau_1)}$ is a minor of~${T(\tau_2)}$.
\end{theorem}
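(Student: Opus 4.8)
The plan is to build the minor witness explicitly from the inclusion $\tau_1 \subseteq \tau_2$. Recall that $V(T(\tau_i)) = \vO(\tau_i)$, the consistent orientations of $\tau_i$, and $E(T(\tau_i))$ is the set of unoriented separations of $\tau_i$. Since $\tau_1 \subseteq \tau_2$, every consistent orientation $O$ of $\tau_2$ restricts to a consistent orientation $O \cap \fwd{\tau_1}1$ of $\tau_1$; this gives a natural surjection $\pi \colon \vO(\tau_2) \to \vO(\tau_1)$ (surjectivity comes from the Extension Lemma~\ref{lem:extension}, since any consistent orientation of $\tau_1$ is a consistent partial orientation of $\tau_2$ and extends). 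I would first check that the fibres of $\pi$ are connected in $T(\tau_2)$: given $O, O'$ with $\pi(O) = \pi(O')$, running the flipping argument (as in the proof of Lemma~\ref{lem:T_connected} / the pseudo-arc argument) between $O$ and $O'$ only flips separations on which $O$ and $O'$ disagree, and all of those lie in $\fwd{\tau_2}1 \setminus \fwd{\tau_1}1$ because $O$ and $O'$ agree on $\tau_1$; hence the connecting pseudo-arc stays inside the fibre.

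Next I would set $F_1 := E(T(\tau_2)) \setminus E(T(\tau_1))$, i.e.\ the edges of $T(\tau_2)$ corresponding to separations in $\tau_2 \setminus \tau_1$, and consider the contraction $T(\tau_2)/F_1$. The point of the fibre-connectedness step is that each equivalence class of $\sim_{F_1}$ meeting the vertex set is exactly a fibre of $\pi$ (an edge $e \in F_1$ has end-vertices $O(\ve)$ and $O(\ev)$ in $\tau_2$, which restrict to the \emph{same} orientation of $\tau_1$ since they differ only in a separation outside $\tau_1$), so contracting $F_1$ identifies precisely the vertices with the same $\pi$-image. I would then argue that $T(\tau_2)/F_1$ is isomorphic, as a graph-like space, to $T(\tau_1)$: its vertex set is $\vO(\tau_1)$ via $\pi$, its edge set is $E(T(\tau_1))$, and the maps $\iota_e$ are unchanged, so one needs to verify that the quotient topology agrees with the topology on $T(\tau_1)$ defined by the sub-base $\mathcal{S}$. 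For this I would take the fixed orientation $O'$ of $\tau_2$, restrict it to $\tau_1$, and check that the subbasic sets $S(\ve, r)$ of $T(\tau_1)$ pull back under the quotient map to (intersections of) subbasic sets of $T(\tau_2)$ and vice versa, using that for $\ve \in \fwd{\tau_1}1$ the sets $E^+(\ve), E^-(\ve)$ computed in $\tau_1$ are the traces in $\fwd{\tau_1}1$ of those computed in $\tau_2$.

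Since a contraction is by definition a minor (take $F_1$ as above, $F_2 = \emptyset$, $W = \emptyset$ in the definition of minor of graph-like spaces), this will complete the proof. I expect the main obstacle to be the topological identification $T(\tau_2)/F_1 \cong T(\tau_1)$: one must show the quotient map is not merely a continuous bijection on the right underlying sets but a homeomorphism, and the quotient topology defined via the Hausdorffification $\sim_{F_1}$ is somewhat indirect. The cleanest route is probably to verify directly that $\pi$ together with the identity on inner points is continuous (subbasic preimages are open, as in the computation above) and open (subbasic images are open), rather than reasoning abstractly about the quotient; compactness of $T(\tau_2)/F_1$ and Hausdorffness of $T(\tau_1)$ then give the homeomorphism for free, exactly as in Lemma~\ref{lem:T-tau-T-iso-TLS}. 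A minor secondary point is confirming that $T(\tau_2)/F_1$ really is loop-free and that each $\sim_{F_1}$-class is connected, but both are immediate from the remarks following the definition of contraction.
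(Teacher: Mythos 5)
Your proposal is correct and follows essentially the same route as the paper: the paper likewise shows $T(\tau_1)\cong T(\tau_2).E(T(\tau_1))$ (i.e.\ the contraction of exactly your $F_1$), identifies the $\sim_{F_1}$-classes of vertices with the fibres of the restriction map $O\mapsto O\cap\fwd{\tau_1}1$, and concludes via a continuous bijection from a compact space to a Hausdorff one. Your additional care about fibre-connectedness and the subbase computation only fills in details the paper leaves implicit.
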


\begin{proof}
    We show that~${T_1 := T(\tau_1)}$ is isomorphic to~${T_2 := T(\tau_2).E(T(\tau_1))}$.
    
    First we note that~${\mathcal{O}(\tau_1) = \{ O \cap \tau_1 \,|\, O \in \mathcal{O}(\tau_2) \}}$.
    Moreover it immediately follows from the definitions that~${O, O' \in \mathcal{O}(\tau_2)}$ are representatives of the same vertex of~$T_2$ if and only if~${O \cap \tau_1 = O' \cap \tau_1}$.
    
    For ease of notation we may assume without loss of generality that the orientation of~$\tau_1$ that we chose in the construction of~${T(\tau_1)}$ is induced by the orientation we chose for~$\tau_2$ in the construction of~${T(\tau_2)}$.
    Let~$\phi$ denote the concatenation of the identity from~$T_1$ to~${T(\tau_2)}$ and the quotient map from~${T(\tau_2)}$ to~$T_2$.
    By the previous observations, this map is a bijection and induces a bijection between~${V(T_1)}$ and~${V(T_2)}$.
    By definition~$\phi$ is continuous and hence shows that~$T_1$ is isomorphic to~$T_2$.
\end{proof}
    
\begin{theorem}\label{thm:tlsminor2}
    Let~$T_1$,~$T_2$ be tree-like spaces where~$T_1$ is a minor of~$T_2$.
    Then~${\tau(T_1)}$ is isomorphic to a subset of~${\tau(T_2)}$.
\end{theorem}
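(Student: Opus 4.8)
\textbf{Proof proposal for Theorem~\ref{thm:tlsminor2}.}

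The plan is to unpack the definition of a minor of graph-like spaces and show that, under the contraction–deletion–vertex-deletion operations, the edge tree set can only shrink, up to isomorphism. By hypothesis there are disjoint edge sets $F_1, F_2 \subseteq E(T_2)$ and a set $W$ of non-end-vertices such that $T_1 \cong ((T_2/F_1) - F_2) - W$. Since deleting non-end-vertices does not change the edge set or the pseudo-arc structure between the remaining vertices (the maps $\iota_e$ are unchanged, and the unique pseudo-arc between two surviving vertices is unaffected), the vertex-deletion step has no effect on the edge tree set: $\tau(((T_2/F_1)-F_2)-W) = \tau((T_2/F_1)-F_2)$. So it suffices to treat the case $T_1 \cong (T_2/F_1) - F_2$, and I would handle the contraction and the deletion separately.

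For the deletion, I would first observe that if $T$ is a tree-like space and $F \subseteq E(T)$, then $\tau(T-F)$ embeds as a subset of $\tau(T)$: the oriented edge set $\vE(T-F)$ is literally $\{(x,y) \in \vE(T) : \{x,y\} = \{\iota_e(0),\iota_e(1)\}$ for some $e \notin F\}$, the involution agrees, and for two edges $e,e' \notin F$ the unique pseudo-arc $P^{T-F}(u,v)$ between their endpoints is contained in $P^T(u,v)$ — indeed it must equal it, since by Lemma~\ref{lem:parc_edges} a pseudo-arc is determined by its edges and the minimal (deletion) pseudo-arc between two vertices of $T-F$ is also a pseudo-arc in $T$, hence the unique one. (One subtlety: $T-F$ need not be connected, so "the unique pseudo-arc" should be read as "if $u,v$ lie in a common pseudo-arc of $T-F$"; but the edge tree set only compares pairs of edges, and two edges $e,e'$ of $T-F$ always lie on a common pseudo-arc of $T-F$ only if their endpoints do — otherwise $e$ and $e'$ are incomparable and the inclusion of orders is vacuous in that coordinate.) Thus the inclusion map $\vE(T-F) \hookrightarrow \vE(T)$ is an order-preserving, involution-commuting bijection onto its image, i.e.\ an isomorphism onto a subset of $\tau(T)$.

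For the contraction, the key point is that contracting $F_1$ identifies some vertices but leaves $E(T_2/F_1) = E(T_2) \setminus F_1$, so there is again a natural injection $\vE(T_2/F_1) \hookrightarrow \vE(T_2)$ sending an oriented edge $(\iota^{T_2/F_1}_e(0), \iota^{T_2/F_1}_e(1))$ to $(\iota^{T_2}_e(0), \iota^{T_2}_e(1))$ (using $\iota^{T_2/F_1}_e = \iota^{T_2}_e$ after identifying classes with representatives, and noting no edge of $T_2/F_1$ becomes a loop since $T_2/F_1$ is again a tree-like space, in particular loop-free). This map commutes with the involution. For order-preservation I would argue: if $e, e' \notin F_1$ with $(x,y) < (v,w)$ in $\tau(T_2/F_1)$, then $P^{T_2/F_1}(y,v) \subseteq P^{T_2/F_1}(x,v) \subseteq P^{T_2/F_1}(x,w)$; lifting back to $T_2$, the pseudo-arc $P^{T_2}(x,w)$ between the representatives still passes through the $\sim_{F_1}$-classes of $x$ (trivially), $y$, and $v$ in that cyclic order, because the quotient map $T_2 \to T_2/F_1$ is continuous and sends $P^{T_2}(x,w)$ onto a connected compact subset containing the images of $x,w$, which forces it to contain $P^{T_2/F_1}([x],[w])$; combined with the fact (Remark after the contraction definition) that each $\sim_{F_1}$-class is connected, one deduces $P^{T_2}(x,w)$ meets the classes of $y$ and $v$, hence passes through suitable lift-vertices, giving $P^{T_2}(y',v') \subseteq P^{T_2}(x,v') \subseteq P^{T_2}(x,w)$ for appropriate endpoint choices and thus $(x,y) < (v,w)$ in $\tau(T_2)$.

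I expect the contraction step, specifically tracking pseudo-arcs through the Hausdorffification quotient $\sim_{F_1}$, to be the main obstacle: one must be careful that the image of a pseudo-arc under the quotient map is genuinely the pseudo-arc between the image endpoints (not just some connected set containing them), and that an oriented edge $(x,y)$ of $T_2/F_1$ has a canonical lift to $T_2$ — which it does, since its endpoints are $\sim_{F_1}$-classes of actual end-vertices of $e$ in $T_2$, namely $\iota^{T_2}_e(0)$ and $\iota^{T_2}_e(1)$. Once these bookkeeping points are pinned down, composing the contraction injection with the deletion injection and invoking Lemma~\ref{lem:Isomorphism} (both $\tau(T_1)$ and its image in $\tau(T_2)$ being regular tree sets, with $\tau(T_1)$ nested) yields that $\tau(T_1)$ is isomorphic to a subset of $\tau(T_2)$, completing the proof.
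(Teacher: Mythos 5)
Your overall plan---realise the minor via contraction and deletion, use the natural bijection between $E(T_1)$ and a subset of $E(T_2)$, verify order-preservation by tracking pseudo-arcs through the quotient, and finish with Lemma~\ref{lem:Isomorphism}---is essentially the paper's plan. But there is a genuine gap at exactly the point you flag as the main obstacle, and your sketch does not close it. The order on $\vE(T_2)$ is defined via pseudo-arcs between the \emph{actual} end-vertices $\iota^{T_2}_e(0),\iota^{T_2}_e(1)$, so it is not enough to know that $P^{T_2}(x,w)$ ``meets the classes of $y$ and $v$'' and then pick ``appropriate endpoint choices''---no choice is available: you must show $P^{T_2}(y,v)\subseteq P^{T_2}(x,v)\subseteq P^{T_2}(x,w)$ for those specific representatives, with the correct orientations. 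Your continuity argument only shows that the quotient image of $P^{T_2}(x,w)$ \emph{contains} the pseudo-arc between $[x]$ and $[w]$ downstairs; what is needed (and what the paper isolates as its one substantive claim) is that this image is \emph{exactly} that pseudo-arc, i.e.\ that $P^{T_1}([x],[w])$ has point set $\{[p] : p\in P^{T_2}(x,w)\}$. Armed with that equality, the paper runs the map in the opposite direction---from the subset $\tau(T_2)\setminus\{(v,w) : v\in[w]\}$ \emph{onto} $\tau(T_1)$, via $(v,w)\mapsto([v],[w])$---so that order-preservation is the easy push-forward direction, injectivity follows from connectedness of the $\sim$-classes, and Lemma~\ref{lem:Isomorphism} supplies the rest without ever lifting a pseudo-arc. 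Your direction can be repaired (the inner points of $e$ and $e'$ are not identified by the contraction, so containment of $\mathring{e},\mathring{e}'$ in the pseudo-arc downstairs forces their containment in $P^{T_2}(x,w)$, and closedness then yields $y,v\in P^{T_2}(x,w)$), but you would still owe the reader the decomposition of a pseudo-arc into sub-pseudo-arcs at an interior vertex, which is precisely the \cite{GLS} machinery the paper avoids.

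A second, smaller defect is the three-step decomposition itself. The intermediate space $(T_2/F_1)-F_2$ is disconnected whenever $F_2\neq\emptyset$ (deleting any edge of a tree-like space disconnects it, by Definition~\ref{def:TLS}\ref{item:TLS-min-con}), so its ``edge tree set'' is not defined by the paper and your deletion step compares orders on objects that do not carry one; similarly, $T-W$ need not be compact when $W$ contains limits of edges, so ``the unique pseudo-arc between surviving vertices is unaffected'' is not automatic. The paper sidesteps both issues with the single normalisation $T_1=T_2.E(T_1)$, working only with the two genuine tree-like spaces $T_1$ and $T_2$; you should either justify your intermediate objects or adopt that reduction.
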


\begin{proof}
    For ease of notation we may assume without loss of generality that~${T_1 = T_2.E(T_1)}$ and that ${\iota^{T_2}_e(j) \in \iota^{T_1}_e(j)}$ for all ${e \in E(T_1)}$ and~${j \in \{0,1\}}$.
    We show that ${\tau_1 := \tau(T_1)}$ is isomorphic to ${\tau_2 := \tau(T_2) \setminus \{ (v,w) \,|\, v \in [w] \}}$.
    
    Let~${\phi: \tau_2 \to \tau_1}$ be defined as~${\phi(v,w) = ([v], [w])}$.
    It is easy to see that this map is well-defined, surjective and commutes with the involution.
    For the injectivity consider ${(v_1,w_1), (v_2,w_2) \in \tau_2}$ with~${v_1 \in [v_2]}$ and~${w_1 \in [w_2]}$ and let~${e_i \in E(T_2)}$ be such that ${\{v_i, w_i\} = \{ \iota^{T_2}_{e_i}(0), \iota^{T_2}_{e_i}(1) \}}$ for~${i \in \{1,2\}}$.
    Since~${[v_2]}$ and~${[w_2]}$ are both connected (as subspaces of~$T_2$) but in different components of~${T_2 - e_i}$, we obtain that~${e_1 = e_2}$ and hence ${(v_1,w_1) = (v_2,w_2)}$.
    
    Consider a pseudo-arc~${P(v,w)}$ in~$T_2$ between any vertices~$v$ and~$w$.
    It is not hard to verify that the unique pseudo-arc in~$T_1$ between~${[v]}$ and~${[w]}$ has as its point set ${\{ [x] \in T_1 \,|\, x \in P(v,w) \}}$.
    This observation implies that~$\phi$ is order-preserving and hence an isomorphism by Lemma~\ref{lem:Isomorphism}.
\end{proof}

\begin{lemma}
    Let~$P$ be a pseudo-arc and~${x, y \in V(P)}$. 
    Then there is an edge~${e \in E(P)}$ such that~$x$ and~$y$ are separated in~${P-e}$. 
\end{lemma}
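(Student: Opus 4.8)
This statement coincides with Lemma~\ref{lem:pseudo-arc-redundancy} proved above, so the shortest route is simply to cite that lemma. For a self-contained argument I would repeat its proof: work with a smallest topological cut separating $x$ and $y$. First I would dispose of the trivial case $x=y$ (equivalently, $P$ trivial), where there is nothing to prove, and assume $x\neq y$. The defining axioms of a graph-like space then yield a topological cut separating $x$ and $y$, and by Remark~\ref{rem:finite-top-cuts} every topological cut of the compact space $P$ is finite; so I may pick such a cut $F$ of minimum cardinality. If $|F|=1$ its unique edge is the required $e$, so the real work is to rule out $|F|\geq 2$.

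Assume $|F|\geq 2$ and fix distinct $f_1,f_2\in F$; let $C_x,C_y$ be the (distinct) components of $P-F$ containing $x$ and $y$. The crux is to exploit minimality of $F$: for each $e\in F$ the smaller cut $F\setminus\{e\}$ cannot separate $x$ and $y$, so $x$ and $y$ lie in one component of $P-(F\setminus\{e\})$, which is obtained from $P-F$ by re-inserting the single edge $e$. Re-inserting $e$ merges precisely the two components of $P-F$ meeting its end-vertices, so for $x$ and $y$ to become joined those two components must be $C_x$ and $C_y$; hence \emph{every} edge of $F$ has one end-vertex in $C_x$ and one in $C_y$. In particular $f_2$ joins $C_x$ to $C_y$.

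Now I would look at $P-f_1$. It still contains all of $P-F$ together with the inner points $\mathring{f_2}$, so $C_x\cup\mathring{f_2}\cup C_y$ is connected and contains both end-vertices of $f_1$; thus these end-vertices lie in a common component $C$ of $P-f_1$. Re-attaching $f_1$ then keeps $C$ connected and merges nothing else, so since $P$ is connected, $P-f_1$ must already be connected — contradicting the defining property of a pseudo-arc that the start- and end-vertices of $P$ are separated in $P-f_1$. Hence $|F|=1$, and we are done.

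The step I expect to be the main obstacle is the component bookkeeping in the graph-like setting: one has to be careful that deleting or re-inserting an edge affects only the components incident with its two end-vertices, and that the sets $C_x$, $C_y$ and $\mathring{f_2}$ genuinely amalgamate into a single component of $P-f_1$. Given Remark~\ref{rem:finite-top-cuts} and the proof of Lemma~\ref{lem:pseudo-arc-redundancy}, this is routine, and the contradiction with the definition of a pseudo-arc is then immediate.
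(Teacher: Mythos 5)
Your proof is correct and follows essentially the same route as the paper: this statement duplicates Lemma~\ref{lem:pseudo-arc-redundancy}, and your argument via a minimum finite topological cut $F$, showing every edge of $F$ joins $C_x$ to $C_y$ and hence that $P-f_1$ would remain connected, is exactly the paper's proof. Your added bookkeeping at the end (making explicit why $P-f_1$ being connected contradicts the pseudo-arc property) only spells out a step the paper leaves implicit.
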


\section*{Acknowledgement}

We would like to thank Nathan Bowler for greatly simplifying our proof of Theorem~\ref{thm:GLS_connected} by pointing out the redundancy of the extra condition in the definition of pseudo-arc (cf.~Lemma~\ref{lem:pseudo-arc-redundancy}).

\begin{bibdiv}
\begin{biblist}

\bib{GLS}{article}{
   author={Bowler, Nathan},
   author={Carmesin, Johannes},
   author={Christian, Robin},
   title={Infinite graphic matroids},
   journal={Combinatorica},
   volume={38},
   date={2018},
   number={2},
   pages={305--339},
   issn={0209-9683},
   review={\MR{3800843}},
   doi={10.1007/s00493-016-3178-3},
}


\bib{ASS}{article}{
   author={Diestel, Reinhard},
   title={Abstract separation systems},
   journal={Order},
   volume={35},
   date={2018},
   number={1},
   pages={157--170},
   issn={0167-8094},
   review={\MR{3774512}},
   doi={10.1007/s11083-017-9424-5},
}

\bib{TreeSets}{article}{
   author={Diestel, Reinhard},
   title={Tree sets},
   journal={Order},
   volume={35},
   date={2018},
   number={1},
   pages={171--192},
   issn={0167-8094},
   review={\MR{3774513}},
   doi={10.1007/s11083-017-9425-4},
}


\bib{Duality}{article}{
   author={Diestel, Reinhard},
   author={Oum, Sang-il},
   title={Tangle-tree duality in abstract separation systems},
   date={2017},
   eprint={1701.02509},
   note={Preprint},
}

\bib{DualityII}{article}{
   author={Diestel, Reinhard},
   author={Oum, Sang-il},
   title={Tangle-tree duality: in graphs, matroids and beyond},
   date={2017},
   eprint={1701.02651},
   note={Preprint},
}

\bib{MonaLisa}{article}{
   author={Diestel, Reinhard},
   author={Whittle, Geoff},
   title={Tangles and the Mona Lisa},
   date={2016},
   eprint={1603.06652},
   note={Preprint},
}

\bib{RSTangles}{article}{
   author={Robertson, Neil},
   author={Seymour, P. D.},
   title={Graph minors. X. Obstructions to tree-decomposition},
   journal={J. Combin. Theory Ser. B},
   volume={52},
   date={1991},
   number={2},
   pages={153--190},
   issn={0095-8956},
   review={\MR{1110468}},
   doi={10.1016/0095-8956(91)90061-N},
}

\bib{RST-infinite-minors}{article}{
   author={Robertson, Neil},
   author={Seymour, Paul},
   author={Thomas, Robin},
   title={Excluding infinite minors},
   note={Directions in infinite graph theory and combinatorics (Cambridge,
   1989)},
   journal={Discrete Math.},
   volume={95},
   date={1991},
   number={1-3},
   pages={303--319},
   issn={0012-365X},
   review={\MR{1141945}},
   doi={10.1016/0012-365X(91)90343-Z},
}

\bib{TreeWidth}{article}{
   author={Seymour, P. D.},
   author={Thomas, Robin},
   title={Graph searching and a min-max theorem for tree-width},
   journal={J. Combin. Theory Ser. B},
   volume={58},
   date={1993},
   number={1},
   pages={22--33},
   issn={0095-8956},
   review={\MR{1214888}},
   doi={10.1006/jctb.1993.1027},
}

\bib{OriginalGLS}{article}{
   author={Thomassen, Carsten},
   author={Vella, Antoine},
   title={Graph-like continua, augmenting arcs, and Menger's theorem},
   journal={Combinatorica},
   volume={28},
   date={2008},
   number={5},
   pages={595--623},
   issn={0209-9683},
   review={\MR{2501250}},
   doi={10.1007/s00493-008-2342-9},
}

\end{biblist}
\end{bibdiv}
\end{document}